\newtheorem{theorem}{Theorem}[section]
\newtheorem{lemma}[theorem]{Lemma}
\newtheorem{proposition}[theorem]{Proposition}
\newtheorem{corollary}[theorem]{Corollary}
\theoremstyle{definition}
\newtheorem{definition}[theorem]{Definition}
\newtheorem{example}[theorem]{Example}
\newtheorem{remark}[theorem]{Remark}
\newcommand{\R}{\mathbb{R}}
\newcommand{\Z}{\mathbb{Z}}
\newcommand{\ass}{\delta < \half \min(\Xi(V), \Xi(W))}
\newcommand{\Stab}[1]{\text{Stab}(#1)}
\newcommand{\Barc}[1]{\mathbf{Bar}(#1)}
\newcommand{\qhalf}{\frac{q}{2}}
\newcommand{\half}{\frac{1}{2}}
\newcommand{\q}{{\geq q}}
\newcommand{\pperm}{\begin{psmallmatrix} 0 & 1 \\ 1 & 0 \end{psmallmatrix}}
\newcommand{\pprojfirst}{\begin{psmallmatrix} 1 & 0 \end{psmallmatrix}}
\newcommand{\pprojsecond}{\begin{psmallmatrix} 0 & 1 \end{psmallmatrix}}
\newcommand{\pinjfirst}{\begin{psmallmatrix} 1 \\ 0 \end{psmallmatrix}}
\newcommand{\vfirst}{\begin{psmallmatrix} 1 & 0 \\ 0 & 1 \\ 0 & 0  \end{psmallmatrix}}
\newcommand{\vsecond}{\begin{psmallmatrix} 0 & 1 & 0 \end{psmallmatrix}}
\newcommand{\todot}{\mathrel{\ooalign{\hfil$\vcenter{
				\hbox{$\scriptscriptstyle\bullet$}}$\hfil\cr$\to$\cr}}}
\newcommand{\mapstodot}{\mathrel{\ooalign{\hfil$\vcenter{
				\hbox{$\scriptscriptstyle\bullet$}}$\hfil\cr$\mapsto$\cr}}}
\newcommand{\xrightarrowdbl}[2][]{\xrightarrow[#1]{#2}\mathrel{\mkern-14mu}\rightarrow}
\DeclareMathOperator\vs{Vec}
\DeclareMathOperator\supp{supp}
\gdef\pampmatrix{%
	\begingroup
	\let&=\amsamp
	\begin{pmatrix}%
	}
	\gdef\endpampmatrix{\end{pmatrix}\endgroup}
\def\namedlabel#1#2{\begingroup
	#2 \def\@currentlabel{#2} \phantomsection\label{#1}\endgroup}
\newcommand\IfRestateTF{%
	\ifx\label\thmt@gobble@label 
	\expandafter\@firstoftwo
	\else
	\expandafter\@secondoftwo
	\fi
}
\title{Ladder Decomposition for Morphisms of Persistence Modules}
\author{Živa Urbančič \\ Durham University \\ \texttt{ziva.urbancic@durham.ac.uk} \and Jeffrey Giansiracusa \\ Durham University \\ \texttt{jeffrey.giansiracusa@durham.ac.uk}}
\date{\today}
\begin{document}
	\maketitle
	
	\begin{abstract}
		The output of persistent homology is an algebraic object called a persistence module. This object admits a decomposition into a direct sum of interval persistence modules described entirely by the barcode invariant. In this paper we investigate when a morphism~$\Phi \colon V \to W$ of persistence modules admits an analogous direct sum decomposition. Jacquard et al.~\cite{jacquard2021space} showed that a ladder decomposition can be obtained whenever the barcodes of $V$ and $W$ do not have any strictly nested bars. We refine this result and show that even in the presence of nested bars, a ladder decomposition exists when the morphism is sufficiently close to being invertible relative to the scale of the nested bars.
	\end{abstract}
	
	\section{Introduction} \label{sec:intro}
	The main aim of the field of topological data analysis is to develop methods that detect properties related to the shape of the problem of study from a data set of observations. The introduction of persistent homology~\cite{edelsbrunner2000topological} is oftentimes considered as the birth of the field and is still one of its most commonly applied methods. To compute it, the underlying topology of a data set is encoded in the form of a nested family of objects (most often simplicial complexes or sets) reflecting the structure detected at different scales. Computing homology groups with coefficients in field~$\mathbb{F}$ for each of these objects enhanced with the morphisms induced by inclusions gives a \emph{persistence module}: a family~$V=\{V_i\}$ of vector spaces indexed over a poset~$P$ with each relation~$i \leq j$ in~$P$ giving a linear map~$v_{i, j} \colon V_i \to V_j$ called an \emph{inner morphism}. A simple example is an~\emph{interval persistence module}~$V=k_I$ for some interval~$I$ in~$P$, which is given by~$V_i = \mathbb{F}$ for~$i \in I$ and~$V_i = 0$ for~$i \notin I$ with the accompanying maps~$v_{i, j}$ being the identity whenever~$i, j \in I$ and the zero map otherwise. One-parameter \emph{point-wise finite dimensional} (or p.f.d.) persistence modules, which are indexed by a finite totally ordered set and each of the vector spaces~$V_i$ is finite dimensional, are especially nice to work with. The \emph{structure theorem}~\cite{crawley2015decomposition} states that any p.f.d.\ persistence module~$V$ is a direct sum of interval persistence modules. The multiset of intervals appearing in the decomposition is a topological invariant called the~\emph{persistence barcode}~\cite{carlsson2005persistence} which we denote as~$\Barc{V}$. It is complete and its discrete nature makes it easy to understand and visualise.
	
	A morphism~$\Phi \colon V \to W$ of persistence modules is a family~$\{\Phi_i \colon V_i \to W_i \}_{i \in P}$ of linear maps that commute with the inner morphisms. When~$P$ is finite and totally ordered, the morphism can be represented as the commutative diagram
	\begin{center}
		\begin{tikzcd}
			V_0 \arrow[r] \arrow[d, "{\Phi_0}"] & V_1 \arrow[r] \arrow[d, "{\Phi_1}"] & \cdots \arrow[r] & V_{l-1} \arrow[r]  \arrow[d, "{\Phi_{l-1}}"] & V_l \arrow[d, "{\Phi_l}"] \\
			W_0 \arrow[r]                       & W_1 \arrow[r]                       & \cdots \arrow[r] & W_{l-1} \arrow[r]                            & W_l.
		\end{tikzcd}
	\end{center}
	A common approach is to view it as being a persistence module itself: the indexing poset is ``ladder-like''(see~\cref{fig:ladder_poset}) and so it belongs among \emph{ladder persistence modules}~\cite{escolar2016ladder_modules}.
	\begin{figure}[ht].
		\centering
		\includegraphics[width=0.5\linewidth]{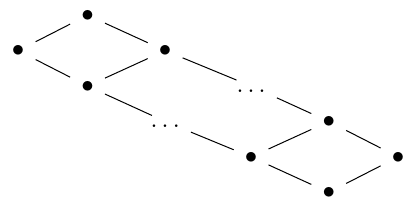}
		\caption{Hasse diagram of a finite ``ladder-like'' poset.}
		\label{fig:ladder_poset}
	\end{figure}
	In the spirit of the structure theorem giving a barcode for one-parameter p.f.d.\ persistence modules, Jacquard et al.~\cite{jacquard2021space} identify assumptions under which a morphism viewed as a ladder persistence module decomposes into a direct sum of elementary ladder persistence modules:
	\begin{itemize}
		\item $\mathbf{I}^+_J$ consists of an interval~$J$ on the source side that is mapped to~$0$ on the target side.
		\item $\mathbf{I}^-_K$ consists of~$0$ on the source side and an interval~$K$ on the target side.
		\item $\mathbf{R}^J_K$ consist of an interval~$J$ on the source side and an interval~$K$ on the target side with a morphism that sends the generator of~$J$ to the generator of~$K$.
	\end{itemize}
	A ladder decomposition is then an isomorphism of~$\Phi \colon V \to W$ to a direct sum of elementary ladder persistence modules, and the main result of~\cite{jacquard2021space} is that it exists whenever the barcodes~$\Barc{V}$ and~$\Barc{W}$ do not admit strictly nested bars.

	In this work we focus on a special family of morphisms called the \emph{interleaving morphisms}. They are associated with a shifting parameter~$\delta$ and come in pairs~$(\Phi, \Psi)$, with~$\Phi\colon V \to W(\delta)$ and~$\Psi\colon W \to V(\delta)$, where the persistence module~$V(\delta)$ is obtained from~$V$ via shifts in the indexing parameter, i.e.~$V(\delta)_t = V_{t+\delta}$. The pair must further satisfy that their composition (both instances) is exactly the family of inner morphisms of the corresponding persistence module. Because of this property, their existence implies the persistence modules in the domain and codomain are algebraically related, and the smaller the shifting parameter, the stronger this relation is. The smallest~$\delta$ for which a~$\delta$-interleaving pair between~$V$ and~$W$ exists is denoted by~$d_I$ and called their~\emph{interleaving distance}~\cite{interleavings_original_paper} (it can be defined in more general settings, for example in any category with flow~\cite{de2018theory}). As a consequence of various stability results~\cite{bottleneck_distance, bauer2013induced} interleaving morphisms arise between persistence modules obtained from closely related inputs to the persistent homology pipeline. They also arise between restrictions of multi-parameter persistence modules to close-enough parallel lines in the parameter space~\cite{fibered_barcode}. As such, they are interesting from the point of view of both applications and theoretical results.
	
	In most of our work we will only be interested in one of the morphisms in the interleaving pair. For this reason we introduce the notion of a~\emph{$\delta$-invertible morphism} - a morphism~$\Phi \colon V \to W$ for which there exists another morphism~$\Psi: W \to V(2\delta)$ so that both of their compositions are just the inner morphisms in the corresponding persistence module. It is easy to see a~$\delta$-invertible morphism~$\Phi \colon V \to W$ is equivalent to a morphism in a~$\delta$-interleaving pair between~$V$ and~$W(-\delta)$. Special properties of~$\delta$-invertible morphisms make them somewhat easier to work with than the general morphisms, which we leverage to obtain their ladder decompositions under looser assumptions. To elaborate, we define a constant~$\Xi(V)$ called the~\emph{nestedness} of persistence module~$V$, as the minimal distance between endpoints (either of birth-points or of death-points) of strictly nested bars.
	\begin{restatable*}{theorem}{main} \label{thm:Xi_constant}
	For a~$\delta$-invertible morphism~${\Phi\colon V \to W}$ with~${\delta < \half \min( \Xi(V), \Xi(W))}$ there exist parameters~${r_{J_1}^{J_2}, d_{J}^{+}, d_K^- \in \mathbb{N}}$ such that 
	\begin{align*}
		(V, W, \Phi) \cong \bigoplus_{J_1 \preceq J_2}\Big( \mathbf{R}_{J_1}^{J_2} \Big)^{r_{J_1}^{J_2}} \oplus \bigoplus_{J} \Big( \mathbf{I}^+_{J} \Big)^{d_{J}^+} \oplus \bigoplus_{K} \Big( \mathbf{I}^-_{K} \Big)^{d_{K}^-}.
	\end{align*}
	\end{restatable*}
	In persistent homology pipelines, long bars are the signals and short bars are often associated with noise in the input. Therefore it is potentially useful to truncate and discard the bars shorter than some threshold. Here we explore how this process interacts with the ladder decomposition theorem above. For this reason, we introduce another parameter~$q$ to be the length of the longest bar we wish to disregard. We consider restrictions~$V_\q$ and~$W_\q$ of modules~$V$ and~$W$ to the features which persist for at least~$q$, which are given by projection maps~$pr_\q^V$ and~$pr_\q^W$ and inclusion maps~$i_\q^V$ and~$i_\q^W$ respectively.
	\begin{restatable*}{theorem}{qmain} \label{prop:q-decompositions_invertible}
		Let~$\Phi\colon V \to W$ be a~$\delta$-invertible morphism. If there exists a parameter~$q$ such that
		\begin{enumerate}
			\item $\delta < \half \min(\Xi(V), \Xi(W_{\geq q})) - \qhalf$, then the $(\delta+\qhalf)$-invertible morphism~$pr_\q^W \circ \Phi \colon V \to W_{\geq q}$ decomposes as a ladder persistence module. \label{invertible1}
			\item $\delta < \half \min(\Xi(V_{\geq q}), \Xi(W)) - \qhalf$, then the $(\delta+\qhalf)$-invertible morphism $\Phi \circ i_\q^V \colon V_{\geq q} \to W$ decomposes as a ladder persistence module. \label{invertible2}
			\item $\delta < \half \min(\Xi(V_{\geq q}), \Xi(W_{\geq q})) - \qhalf$, then the $(\delta+\qhalf)$-invertible morphism $pr_\q^W \circ \Phi \circ i_\q^V \colon V_{\geq q} \to W_{\geq q}$ decomposes as a ladder persistence module. \label{invertible3}
		\end{enumerate}
		\IfRestateTF{}{Furthermore, the barcode bases in which these ladder decompositions are obtained can be extended to barcode bases of persistence modules~$V$ and~$W$.}
	\end{restatable*}

	Via the correspondence between~$\delta$-invertible morphisms and morphisms appearing in~$\delta$-interleaving pairs we obtain ladder decompositions of both morphisms in an interleaving pair. Comparing them we find they are as compatible as the shifting parameter allows.
	\begin{restatable*}{theorem}{correspondence} \label{thm:similarity_of_bases}
		Let~$(\Phi,\Psi)$ be a~$\delta$-interleaving pair between modules~$V$ and~$W$ with~$\ass$. For any pair of bars~$J_V \in \Barc{V}$ and~$J_W \in \Barc{W}$ satisfying~${|J_V|, |J_W| \geq 2\delta}$, and for any~$\mu \in \mathbb{N}$ the following statements are equivalent
		\begin{itemize}
			\item $(\mathbf{R}_{J_W(\delta)}^{J_V})^\mu$ appears in the ladder decomposition of~$\Phi$,
			\item $(\mathbf{R}_{J_V(\delta)}^{J_W})^\mu$ appears in the ladder decomposition of~$\Psi$.
		\end{itemize}
	\end{restatable*}
	As observed already in~\cite{jacquard2021space}, whenever a ladder decomposition can be obtained, it induces a \emph{partial matching} on the barcodes of the persistence modules involved. A partial matching can be defined on a general multi-set as a partial bijection. The first instance of a morphism-induced partial matching appeared in~\cite{bauer2013induced} in order to prove the~\emph{Isometry theorem}, stating that the interleaving distance on the one-parameter persistence modules agrees with the \emph{bottleneck distance}~\cite{bottleneck_distance} on persistence barcodes. This construction, however, is not linear with respect to direct sums of ladder persistence modules. Addressing this (and some other grievances) the notion of \emph{basis-independent} partial matchings has been introduced~\cite{basis_independent_matchings}, which the ladder decomposition induced partial matchings are examples of. We analyse their properties when the morphism is a part of an interleaving and show that their cost is limited above by the interleaving parameter.
	\begin{restatable*}[of \cref{thm:Xi_constant}]{corollary}{cost} \label{cor:cost_of_induced_matching}
		Let~$\Phi \colon V \to W(\delta)$ be one of two morphisms making a~$\delta$-interleaving pair for~$\delta < \half \min( \Xi(V), \Xi(W))$, and~$\chi_\Phi$ the partial matching induced by the ladder decomposition of~$\Phi$. Its cost is at most~$\delta$. If further~$\delta = d_I(V, W)$, then the induced matching realizes the bottleneck distance.
	\end{restatable*}
	Further, the matchings induced by ladder decompositions of the pair of morphisms making an interleaving are compatible for all bars of sufficient length.
	\begin{restatable*}[of \cref{thm:similarity_of_bases}]{corollary}{matchings} \label{cor:correspondence_of_matchings}
		Let~$\chi_\Phi \colon \Barc{V} \todot \Barc{W}$ and~$\chi_{\Psi} \colon \Barc{W} \todot \Barc{V}$ be the partial matchings induced by morphisms~$(\Phi, \Psi)$ forming a $\delta$-interleaving pair where~$\ass$. For any pair of bars~$J_V \in \Barc{V}$ and~$J_W \in \Barc{W}$ satisfying~$|J_V| \geq 2\delta$ and~$|J_W| \geq 2\delta$, and any~$\mu \in \mathbb{N}$
		\begin{align*}
			((J_V, J_W), \mu) \in \chi_\Phi \iff ((J_W, J_V), \mu) \in \chi_\Psi.
		\end{align*} 
	\end{restatable*}
	
	\begin{center}
		\textbf{\large{Structure of the Paper}}
	\end{center}
	The notation used throughout the paper is introduced in \cref{sec:persistence_modules_and_barcode_bases}, which also includes a summary of the theory of \emph{barcode bases}~\cite{jacquard2021space}. Further, we specify how to view morphisms as ladder persistence modules and state the \emph{ladder decomposition theorem} of~\cite{jacquard2021space}. We dedicate \cref{sec:morphisms_and_induced_matchings} to the interleavings and~$\delta$-invertible morphisms. We quote their definition and state some basic results in the language of barcode bases in \cref{subsec:delta-interleavings}. We define the~\emph{nestedness} constant and state the main theorem about the ladder decomposition of~$\delta$-invertible morphisms in \cref{subsec:decomp_of_interleavings}. It also contains a plethora of technical lemmata used to prove the main theorem. \cref{subsec:ladder_decomp_of_interleaving_pair} compares the ladder decompositions of both morphisms making an interleaving pair. The generalisation of the theory introduced in \cref{sec:morphisms_and_induced_matchings} to the case when we discard short bars is presented in \cref{sec:quasi-barcode_form}. It includes the definition of a~$q$-\emph{splitting} of a persistence module and the generalisation of the main theorem -- \cref{prop:q-decompositions}. Lastly, we state results regarding the ladder decomposition induced partial matchings in \cref{sec:induced_partial_matchings}, where they are also compared with other notions of induced partial matchings.
		
	\begin{center}
		\textbf{\large{Acknowledgments}}
	\end{center}
	The authors are a part of the Centre for Topological Data Analysis supported by the EPSRC grant New Approaches to Data Science: Application Driven Topological Data Analysis with reference EP/R018472/1.
	
	\section{Preliminaries}
\label{sec:persistence_modules_and_barcode_bases}

Let us introduce some preliminary definitions and notation used throughout the paper in this section. Particular attention is given to the notion of \emph{barcode bases} originally introduced in~\cite{jacquard2021space}. For a thorough expos\'{e} on the theory of persistence, consult~\cite{edelsbrunner2022computational, carlsson2009topology}.

First, define the following relations on the set of intervals:
	\begin{align*}
		&[i_1, j_1] \leq [i_2, j_2] \iff i_1 < i_2 \text{ or } (i_1 = i_2 \text{ and }j_1 < j_2), \\
		&[i_1, j_1] \preceq [i_2, j_2] \iff i_1 \leq i_2 \leq j_1 \leq j_2 \\
		&[i_1, j_1] \subset [i_2, j_2] \iff i_2 < i_1 \leq j_1 < j_2
	\end{align*}
The first,~$\leq$, is simply the lexicographical order (total), while the second,~$\preceq$, is not transitive and therefore not an order. The relation~$\preceq$ is uniquely useful in persistence barcodes, since it encodes the information of when the generator of one bar can be mapped to the generator of the other with a morphism of persistence modules (see \cref{rem:how_mphi_can_map}). It is often referred to as the \emph{overlapping relation}~\cite{bauer2020persistence}. Finally, the relation~$I \subset J$ simply states that~$I$ is \emph{strictly nested} in~$J$. Note that when a pair of bars~$I \leq J$ has a nonempty intersection and~$I \npreceq J$, it must be nested as~$J \subset I$.

\subsection{Persistence Modules}
Persistent modules arise in the study of topological properties of filtered spaces when we apply homology to each step in the filtration. Formally, a persistence module is a covariant functor~$V \colon P \to \vs$, where~$P$ is a totally ordered set of the form~$[l+1] = \{0, 1, \ldots, l\}$ for some~$l \in \mathbb{N}$ viewed as a category, and~$\vs$ is the category of vector spaces. In other words,~$V$ assigns a vector space~$V_p$ to each element~$p \in P$ and a linear map~$v_{p, q} \colon V_p \to V_q$ to any pair of indices~$p, q \in P$ with~$p \leq q$. Throughout this paper we also assume that the vector space~$V_p$ is finite dimensional for each~$p \in P$ (such persistence modules are pointwise finite dimensional or p.f.d. for short).
\begin{remark}\label{r:poset_vs_toset}
	Persistence modules can be indexed by more general sets. In particular, \cref{sec:morphisms_and_induced_matchings,sec:quasi-barcode_form} will consider persistence modules indexed by ``ladder-like'' posets.
\end{remark}
These assumptions are sensible, since it is the normal setting for the analysis of one-parameter filtrations built on finite real-world data sets, in which the homology changes finitely many times when we vary the filtration parameter. Under these assumptions a persistence module over field~$\mathbb{F}$ can be represented with a diagram
	\begin{align*}
		V_0 \ {\xlongrightarrow{\mathmakebox[1cm]{v_{0, 1}}}}\ V_1\ {\xlongrightarrow{\mathmakebox[1cm]{v_{1,2}}}}\ \cdots\ {\xlongrightarrow{\mathmakebox[1cm]{v_{l-2,l-1}}}}\ V_{l-1}\ {\xlongrightarrow{\mathmakebox[1cm]{v_{l-1,l}}}}\ V_l.
	\end{align*}
Further, the structure theorem~\cite{crawley2015decomposition} assures there exists an interval decomposition
\begin{align} \label{eq:interval_decomposition}
	V \cong \bigoplus_{J} k_J,
\end{align}
where~$k_J$ is the interval persistence module of interval~$J$ defined to be~$\mathbb{F}$ for any index~$j \in J$, zero for all other indices and the inner morphisms being the identity whenever possible. The multiset
$$\Barc{V} = \{(J, \mu_J) \mid k_J \text{ appears in the interval decomposition } \mu_J \text{-many times}\}$$
is called the \emph{barcode} of persistence module~$V$.

Set $n_i = \text{dim}_{\mathbb{F}}V_i$ for all $i \in [l+1]$ and select a \emph{basis family}
	$$\mathcal{B} = \{B_i \subset V_i \ | \ B_i \text{ is an ordered basis of } V_i \text{ for all } i \in [l+1] \}.$$
In these bases the inner morphisms~$v_{i-1, i}\colon V_{i-1} \to V_i$ can be represented as $n_i \times n_{i-1}$ matrices~$A_i$. Consequently, the module~$V$ is isomorphic to
	\begin{align*}
		\mathbb{F}^{n_0} \ {\xrightarrow{\mathmakebox[0.7cm]{A_1}}}\ \mathbb{F}^{n_1}\ {\xrightarrow{\mathmakebox[0.7cm]{A_2}}}\ \cdots\ {\xrightarrow{\mathmakebox[0.7cm]{A_{l-1}}}}\ \mathbb{F}^{n_{l-1}}\ {\xrightarrow{\mathmakebox[0.7cm]{A_l}}}\ \mathbb{F}^{n_l}.
	\end{align*}

Intuitively, a barcode basis is a basis family~$\mathcal{B}$ in which the inner morphisms send basis vectors to basis vectors or the zero vector, and no two basis vectors ever have the same non-zero image. Thus a bar corresponds to a sequence of basis vectors, each one mapping to the next. To define it explicitly use the fact that matrices~$A$ written in such a basis family take a special form.
\begin{definition}\label{def:barcode_form}
	An $m\times n$ matrix~$A=(A_{ij})$ of rank~$r$ is in \emph{barcode form} if there exists a strictly increasing function~$c:\{1, 2, \ldots, r\} \to \{1, 2, \ldots, n \}$ so that
	\begin{align*}
		A_{ij} = \begin{cases}
			1, & \text{if } j = c(i),\\
			0, & \text{otherwise.}
		\end{cases}
	\end{align*}
\end{definition}
\begin{example} \label{ex:barcode_shape}
	A simple example of a matrix in barcode form is the identity matrix. It is of full rank and the function~$c$ from the definition is the identity. A general example, however, is a matrix in row-echelon form where the pivots are~$1$ and all the other entries are~$0$, as 
		\[
	\left[\;\begin{NiceArray}{ccccc}[name=B]
		1 & 0 & 0 & 0 & 0 \\
		0 & 1& 0 & 0 & 0\\
		0 & 0& 0 & 1 & 0\\
		0 & 0& 0 & 0 & 0\\
	\end{NiceArray}\;\right].
	\begin{tikzpicture}[remember picture, overlay]
		\draw[thin,gray] let \p1=($(B-1-2.south west)-(B-2-1.north east)$) in
		([xshift=-\x1/2,yshift=-2*\y1/3]B-1-1.south west) 
		-| ([xshift=-\x1/2,yshift=-2*\y1/3]B-2-2.south west)
		-| ([xshift=-\x1/2,yshift=-2*\y1/3]B-3-4.south west)
		-- ([xshift=\x1/3,yshift=-2*\y1/3]B-3-5.south east);
	\end{tikzpicture} \qedhere
	\]
\end{example}

\begin{definition}
	A basis family~$\mathcal{B}$ is a \emph{barcode basis} if all matrices~$A_i$ are in barcode form.
\end{definition}

Let~$G = \prod_{i=0}^l \text{GL}(n_i; \mathbb{F})$ be the group of elements~$g = (g_0, g_1, \ldots, g_l)$ which acts on a matrix sequence~${A = \{A_i\}_i}$ via a change of basis. More precisely,~$(gA)$ is given by
\begin{align*}
	(gA)_i = g_i \cdot A_i \cdot g_{i-1}^{-1},
\end{align*}
which corresponds to switching from basis family~${\mathcal{B} = \{ B_i \}}$ to~$g\mathcal{B} = \{ g_i B_i \}$. By~\cite[Proposition 2.4.]{jacquard2021space} any matrix sequence~$A \in X$ can be put in a barcode form by the action of~$G$.

The choice of barcode basis for a persistence module is not unique, which becomes obvious when considering persistence modules whose barcodes include a bar with multiplicity higher than~$1$. To discern the space of barcode bases authors of~\cite{jacquard2021space} examine the changes of basis that keep the matrix sequence~$A$ unchanged. These changes are part of the \emph{stabiliser} of~$A$ given by
\begin{align} \label{eq:stabiliser}
	\Stab{A} = \{ g \in G \ | \ A_i = g_i \cdot A_i \cdot g_{i-1}^{-1} \text{ for all } 1 \leq i \leq l \}.
\end{align}
The space of barcode bases is then the orbit~$\Stab{A} \mathcal{B}$, where~$\mathcal{B}$ is a barcode basis.

\subsection{Morphism between Persistence Modules} \label{subsec:mphi_as_lpm}
Here we recall some facts about the algebraic structure of morphisms of persistence modules, leading up to the structure theorem for ladder persistence modules of~\cite{jacquard2021space}.

Given persistence modules~$V$ and~$W$ indexed by an ordered set~$P$, a morphism~$\Phi \colon V \to W$ consists of a family~$\{\Phi_i \colon V_i \to W_i\}_{i \in P}$ of linear maps satisfying
\begin{align*}
	\Phi_{i+1} \circ v_{i, i+1} = w_{i,i+1} \circ \Phi_i.
\end{align*}
Throughout this paper we assume persistence modules are finitely indexed and so morphisms can be viewed as ladder persistence modules~\cite{escolar2016ladder_modules}. For the purposes of our work, a \emph{ladder persistence module} consists of persistence modules~$V$ and~$W$ indexed by~$[l+1]$ with a family of linear maps~$\{\Phi_i\}_{i \in [l+1]}$ arranged in a commutative diagram 
\begin{center}
	\begin{tikzcd}
		V_0 \arrow[r, "{v_{0,1}}"] \arrow[d, "{\Phi_0}"] & V_1 \arrow[r, "{v_{1,2}}"] \arrow[d, "{\Phi_1}"] & V_2 \arrow[r] \arrow[d] & \cdots \arrow[r] \arrow[d] & V_{l-1} \arrow[r, "{v_{l-1,l}}"] \arrow[d, "{\Phi_{l-1}}"] & V_l \arrow[d, "{\Phi_l}"] \\
		W_0 \arrow[r, "{w_{0,1}}"']                      & W_1 \arrow[r, "{w_{1,2}}"']                      & W_2 \arrow[r]           & \cdots \arrow[r]           & W_{l-1} \arrow[r, "{w_{l-1,l}}"']            & W_l.
	\end{tikzcd}
\end{center}
Note that this definition suffices in our setting, but it can be stated more generally~\cite{escolar2016ladder_modules} in the setting of \emph{zig-zag persistence}~\cite{zigzag_og}, where the arrows of inner morphisms~$v$ and~$w$ can be reversed, as long as the direction is the same for~$v_{i, i+1}$ and~$w_{i, i+1}$ for all~$i \in [l+1]$. Since the components of a morphism~${\Phi \colon V \to W}$ between finitely indexed persistence modules commute with the inner morphisms by definition, it is easy to see that in our setting a morphisms~$\Phi$ is equivalent to a ladder persistence module, which we denote by~$(V, W, \Phi)$.

Introducing barcode bases enables us to write morphisms of persistence modules as matrices.

\begin{definition} \label{def:bar_generator}
	Let~$\{ B_i \}_{i \in [l+1]}$ be a barcode basis of persistence module~$V$ and~$J=[\alpha, \beta]$ a bar in its barcode. Then a collection~$x_J = \{b_i \in B_i \}_{i \in J}$ such that
	\begin{itemize}
		\item $A_{i+1} b_{i} = b_{i+1}$ for~$i \in [\alpha, \beta-1]$,
		\item $A_{\beta+1} b_{\beta} = 0$ and
		\item $\langle A_{\alpha} b_{\alpha-1}, b_{\alpha}\rangle_{\alpha} = 0$ for all~$b_{\alpha-1} \in B_{\alpha-1}$,
	\end{itemize}
	where the inner product~$\langle \cdot, \cdot \rangle_i$ is induced by the basis~$B_i$, is called a \emph{generator of bar~$J$}. (Note that there are~$\mu$ distinct choices for~$x_J$, where~$\mu$ is the multiplicity of bar~$J$ in~$\Barc{V}$.)
\end{definition}

\begin{proposition} \label{prop:shape_of_Phi_single_matrix}
	Choose barcode bases for persistence modules~$V$ and~$W$. Then any morphism~${\Phi\colon V \to W}$ can be written as a single matrix
	\begin{align} \label{eq:shape_of_Phi_single_matrix}
		M_\Phi = 
		\begin{bmatrix}
			X_{[0, 0]}^{[0, 0]} & X_{[0, 0]}^{[0, 1]}   & \ldots & X_{[0, 0]}^{[0, l]} & 0 & \ldots & 0 & \ldots & 0 \\
			& X_{[0, 1]}^{[0, 1]} & \ldots & X_{[0, 1]}^{[0, l]} & X_{[0,1]}^{[1,1]} & \ldots & X_{[0,1]}^{[1,l]}  & \ldots & 0 \\
			& & \ddots & \vdots & \vdots & & \vdots & & \vdots \\
			& & & X_{[0, l]}^{[0, l]} & 0  & \ldots & X_{[0,l]}^{[1,l]} & \ldots & 0 \\
			& & & & X_{[1,1]}^{[1,1]}  & \ldots & X_{[1,1]}^{[1,l]} & \ldots & 0 \\
			& & & & & \ddots & \vdots & & \vdots\\
			& & & & & & X_{[1,l]}^{[1,l]} & \ldots & 0 \\
			& & & & & & & \ddots & \vdots \\
			& & & & & & & & X_{[l,l]}^{[l,l]}
		\end{bmatrix},
	\end{align}
	where each sub-matrix~$X_{[i_2, j_2]}^{[i_1, j_1]}$ encodes how $\Phi$ maps generators of bar~$[i_1, j_1]$ in~$\Barc{V}$ to generators of bar~$[i_2, j_2]$ in~$\Barc{W}$.
\end{proposition}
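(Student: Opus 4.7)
The plan is to parametrize the columns and rows of $M_\Phi$ by the generators of bars in $\Barc{V}$ and $\Barc{W}$ respectively, and to use commutativity of $\Phi$ with the inner morphisms to force most blocks to vanish. I would begin by fixing barcode bases for $V$ and $W$ and, for each bar $J_V = [\alpha_1, \beta_1] \in \Barc{V}$ of multiplicity $\mu_{J_V}$, enumerating $\mu_{J_V}$ generators $x_{J_V}^{(m)} = \{b_i^{(m)}\}_{i \in J_V}$ as in \cref{def:bar_generator}; likewise I would pick generators $y_{J_W}^{(n)} = \{c_i^{(n)}\}_{i \in J_W}$ for each $J_W = [\alpha_2, \beta_2] \in \Barc{W}$. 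The preliminary observation is that because $\Phi_i \circ v_{\alpha_1, i} = w_{\alpha_1, i} \circ \Phi_{\alpha_1}$ and $v_{\alpha_1, i}(b_{\alpha_1}^{(m)}) = b_i^{(m)}$ for all $\alpha_1 \leq i \leq \beta_1$, the action of $\Phi$ along the entire generator $x_{J_V}^{(m)}$ is already fixed by the single vector $\Phi_{\alpha_1}(b_{\alpha_1}^{(m)}) \in W_{\alpha_1}$.

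Expanding each such birth image in the barcode basis of $W_{\alpha_1}$ yields
\[
\Phi_{\alpha_1}(b_{\alpha_1}^{(m)}) = \sum_{(J_W, n)} \lambda_{(J_W, n)}^{(J_V, m)} \cdot c_{\alpha_1}^{(n)},
\]
where only pairs $(J_W, n)$ with $\alpha_2 \leq \alpha_1 \leq \beta_2$ can appear, so that $c_{\alpha_1}^{(n)}$ is an actual basis vector of $W_{\alpha_1}$. To pin down which bars $J_W$ can contribute, I would propagate forward to index $\beta_1 + 1$: on the one hand, $v_{\beta_1, \beta_1+1}(b_{\beta_1}^{(m)}) = 0$ by \cref{def:bar_generator}, so $\Phi_{\beta_1+1}(v_{\beta_1, \beta_1+1}(b_{\beta_1}^{(m)})) = 0$; on the other, commutativity rewrites this as $\sum \lambda_{(J_W, n)}^{(J_V, m)} \cdot w_{\alpha_1, \beta_1+1}(c_{\alpha_1}^{(n)})$. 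In barcode form, each $w_{\alpha_1, \beta_1+1}(c_{\alpha_1}^{(n)})$ is either $0$ (when $\beta_2 \leq \beta_1$) or the basis vector $c_{\beta_1+1}^{(n)}$ of $W_{\beta_1+1}$ (when $\beta_2 > \beta_1$), and the surviving vectors are linearly independent. This forces $\lambda_{(J_W, n)}^{(J_V, m)} = 0$ whenever $\beta_2 > \beta_1$, so the only coefficients that can be non-zero correspond to pairs with $\alpha_2 \leq \alpha_1 \leq \beta_2 \leq \beta_1$, i.e.\ $J_W \preceq J_V$.

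Finally, I would collect the scalars $\{\lambda_{(J_W, n)}^{(J_V, m)}\}_{n, m}$ into the $\mu_{J_W} \times \mu_{J_V}$ sub-matrix $X_{J_W}^{J_V}$, and order both the row- and column-bars lexicographically via $\leq$. Since $J_W \preceq J_V$ implies $J_W \leq J_V$ (an easy case split on whether the left endpoints coincide), every possibly non-zero block sits on or above the block diagonal, matching the form of $M_\Phi$ displayed in the statement. I do not expect a substantive obstacle here: the only delicate point is the linear-independence argument in the forward-propagation step, which is precisely where the barcode form of the inner morphisms of $W$ enters, and the remainder is a matter of bookkeeping.
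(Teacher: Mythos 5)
Your proof is correct and follows the natural route. The paper itself defers to Step 1 of the proof of Theorem 4.3 in \cite{jacquard2021space}, and your argument reconstructs that step faithfully: commutativity reduces $\Phi$ on a whole generator to its value on the birth vector; expanding in the barcode basis at the birth index shows $\alpha_2 \leq \alpha_1 \leq \beta_2$; and pushing forward past the death of $J_V$ and using linear independence of the surviving $c_{\beta_1+1}^{(n)}$ kills the remaining coefficients, giving $J_W \preceq J_V$. The only thing you leave implicit --- though it is built into the notion of a barcode basis --- is that every basis vector of $V_i$ lies in exactly one bar generator passing through $i$, so that specifying $\Phi$ on birth vectors really does determine $\Phi$ everywhere. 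With that remark made explicit, the bookkeeping step and the observation that $\preceq$ refines the lexicographic order $\leq$ complete the proof of the displayed block shape.
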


The proof of \cref{prop:shape_of_Phi_single_matrix} is included in the proof of~\cite[Theorem 4.3.]{jacquard2021space} as Step $1$. Notice that a bar~$J$ with multiplicity~$\mu_J$ has~$\mu_J$ columns (or rows) associated to it, each belonging to one of the generators~$x_J$.

\begin{remark}  \label{rem:how_mphi_can_map}
	It is easy to see that there cannot be a non-zero morphism $I_{J_1} \to I_{J_2}$ between interval persistence modules unless~${J_2 \preceq J_1}$ (this must hold if the commuting-squares requirement in the definition of a morphism of persistence modules is to be satisfied). This is reflected in the general matrix shape~\eqref{eq:shape_of_Phi_single_matrix}, where the only non-zero block matrices~$X_{J_2}^{J_1}$ are associated with bars~$J_2 \preceq J_1$.
\end{remark}

Being able to write morphisms of persistence modules in a matrix also allows us to define the \emph{image}~$\Phi(x_J)$ of a bar generator~$x_J$ for~$J \in \Barc{V}$ as 
\begin{align} \label{eq:image_of_generator}
	\Phi(x_J) = \sum_{x_K} M_\Phi (x_K, x_J) \cdot x_K,
\end{align}
where the sum ranges over all generators~$x_K$ of bars~$K \in \Barc{W}$ and the scalar~$M_\Phi (x_K, x_J)$ is the entry in the row belonging to~$x_K$ and column belonging to~$x_J$ of the matrix~$M_\Phi$. In light of \cref{rem:how_mphi_can_map}, the bar generators~$x_K$ with non-zero coefficient~$M_\Phi (x_K, x_J)$ correspond to bars with~$K \preceq J$. The image~$\Phi(x_J)$ is not to be confused with the image of a basis vector in~$x_J$ with~$\Phi$, which can be expressed for each component~$\Phi_i$ as
\begin{align*}
	\Phi_i((x_J)_i) = \sum_{\substack{x_K \\ K \ni i}} M_\Phi (x_K, x_J) \cdot (x_K)_i.
\end{align*}
For example, a bar generator~$x_K$ for which~$M_\Phi (x_K, x_J)$ is non-zero has a zero coefficient in~$\Phi_i((x_J)_i)$ whenever~$i \notin K$.
\begin{definition} \label{def:support}
	The \emph{support} of the image of the bar generator~$x_J$ with morphism~$\Phi$ is the set
	\begin{align*}
		\supp \Phi (x_J) = \{ x_K \mid M_\Phi (x_K, x_J) \neq 0 \}
	\end{align*}
	of all generators~$x_K$ whose coefficient in~\eqref{eq:image_of_generator} is non-zero.
\end{definition}
\begin{lemma} \label{prop:shape_of_interleaved_barcode}
	If~$x_K \in \supp \Phi(x_{J})$ then~$K \preceq J$.
\end{lemma}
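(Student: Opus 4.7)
The plan is to read the lemma off of the block-matrix structure of $\Phi$ already set up in Proposition \ref{prop:shape_of_Phi_single_matrix} and Remark \ref{rem:how_mphi_can_map}. By construction, the scalar $M_\Phi(x_K, x_J)$ sits in the row labelled by $x_K$ and the column labelled by $x_J$ of the matrix $M_\Phi$, and these rows and columns are grouped into the sub-blocks $X_K^J$. Remark \ref{rem:how_mphi_can_map} records that the only potentially non-zero sub-blocks are those with $K \preceq J$, so $x_K \in \supp\Phi(x_J)$ immediately forces $K \preceq J$. This is morally the entire argument; the content is in justifying the remark on bar generators.

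To make the argument self-contained I would unpack Remark \ref{rem:how_mphi_can_map} directly in terms of the generators $x_J = \{b_i\}_{i \in J}$ and $x_K = \{c_i\}_{i \in K}$ from Definition \ref{def:bar_generator}. Let $J = [\alpha_J, \beta_J]$ and $K = [\alpha_K, \beta_K]$, and assume $M_\Phi(x_K, x_J) \neq 0$. First, applying the commutativity $\Phi_{i+1} \circ v_{i,i+1} = w_{i,i+1} \circ \Phi_i$ at $i = \beta_J$ shows that the $(x_K)$-coefficient of $\Phi_{\beta_J}(b_{\beta_J})$ must vanish under $w_{\beta_J, \beta_J+1}$; in a barcode basis this kills $c_{\beta_J}$ unless $\beta_J < \beta_K$, but since $M_\Phi(x_K, x_J)$ is a single scalar that appears at every index of $K \cap J$, we conclude $\beta_J \leq \beta_K$. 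Iterating the same commuting square backwards from the birth of $K$ gives $\alpha_J \leq \alpha_K$, because otherwise the vector $c_{\alpha_K}$ would have to appear in the image of $\Phi_{\alpha_K}$ without arising from $w_{\alpha_K-1,\alpha_K}$ applied to $\Phi_{\alpha_K-1}(b_{\alpha_K-1})$, contradicting the third clause of Definition \ref{def:bar_generator}. Finally, $J \cap K \neq \emptyset$ (equivalently $\alpha_K \leq \beta_J$) must hold, otherwise the scalar appears at no index at all and the only way it can be nonzero coefficient-wise is if it is zero.

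The main obstacle is really just bookkeeping: being explicit about why a single scalar $M_\Phi(x_K, x_J)$ controls all the coefficients on $K \cap J$ simultaneously, and therefore why a failure of the overlap relation at any one end of the bars propagates to a contradiction with one of the three defining conditions of a bar generator. Once this is accepted, each of the three inequalities $\alpha_J \leq \alpha_K$, $\alpha_K \leq \beta_J$, $\beta_J \leq \beta_K$ follows from applying the commuting-square identity at the appropriate endpoint. Combining the three yields $K \preceq J$, which is exactly the claim.
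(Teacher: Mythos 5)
Your plan---project to interval summands via the barcode bases and unwind the commuting-square conditions at the endpoints of $J$ and $K$---is exactly what Remark~\ref{rem:how_mphi_can_map} encapsulates and what the paper's one-line proof appeals to, so the approach is right. But the inequalities you actually derive all point the wrong way, and you have also reversed the definition of~$\preceq$, so the two errors happen to mask one another in the final sentence.

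Concretely: at $i = \beta_J$ the square $\Phi_{\beta_J+1} \circ v_{\beta_J, \beta_J+1} = w_{\beta_J, \beta_J+1} \circ \Phi_{\beta_J}$ together with $v_{\beta_J, \beta_J+1}(b_{\beta_J}) = 0$ forces $w_{\beta_J, \beta_J+1}(\Phi_{\beta_J}(b_{\beta_J})) = 0$. If $\beta_J \in K$ and $\beta_J < \beta_K$, then $w_{\beta_J, \beta_J+1}(c_{\beta_J}) = c_{\beta_J+1} \ne 0$, so the coefficient $M_\Phi(x_K, x_J)$ would survive into $W_{\beta_J+1}$---a contradiction. Hence the correct conclusion is $\beta_K \leq \beta_J$, not $\beta_J \leq \beta_K$ as you write. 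Likewise, the third clause of Definition~\ref{def:bar_generator} only bites when $\alpha_K > \alpha_J$, so that $\alpha_K - 1 \in J$ and $b_{\alpha_K} = v_{\alpha_K-1,\alpha_K}(b_{\alpha_K-1})$ would force a nonzero $c_{\alpha_K}$-component in the image of $w_{\alpha_K-1,\alpha_K}$; ruling that out gives $\alpha_K \leq \alpha_J$, not $\alpha_J \leq \alpha_K$. Together with overlap these give $\alpha_K \leq \alpha_J \leq \beta_K \leq \beta_J$, which is what $K \preceq J$ means under the paper's definition $[i_1,j_1]\preceq[i_2,j_2] \iff i_1\leq i_2\leq j_1\leq j_2$; your chain $\alpha_J \leq \alpha_K \leq \beta_J \leq \beta_K$ instead reads as $J \preceq K$. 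The skeleton of the argument is sound once you fix the directions consistently.
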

\begin{proof}
	This is a simple reiteration of the observation in~\cref{rem:how_mphi_can_map} using the new notation.
\end{proof}

Introduce the following simple and intuitive components as building blocks in the decomposition of ladder modules:~$\mathbf{R}_{[i_1, j_1] }^{[i_2, j_2]}, \mathbf{I}^+_{[i_1, j_1] }$ and~$\mathbf{I}^-_{[i_1, j_1] }$ for~$[i_1, j_1] \preceq [i_2, j_2]$.
\begin{center}
	\begin{tabular}{l}
		$\mathbf{R}_{[i_1, j_1] }^{[i_2, j_2]} \colon$
		\begin{tikzcd}
			& & \cdots \arrow[r] & 0 \arrow[r, "{0}"] \arrow[d, "{0}"] & \mathbb{F}_{i_2} \arrow[r, "{\text{id}}"]\arrow[d, "{\text{id}}"] & \cdots \arrow[r, "{\text{id}}"]\arrow[d, "{\text{id}}"] & \mathbb{F}_{j_2} \arrow[r, "{0}"] \arrow[d, "{0}"] & 0 \arrow[r] & \cdots \\
			& \cdots \arrow[r] & 0 \arrow[r, "{0}"'] & \mathbb{F}_{i_1} \arrow[r, "{\text{id}}"'] & \cdots \arrow[r, "{\text{id}}"'] & \mathbb{F}_{j_1} \arrow[r, "{0}"'] & 0 \arrow[r] & \cdots &
		\end{tikzcd} \\
		\\
		$\mathbf{I}^+_{[i_1, j_1] } \colon$
		\begin{tikzcd}
			& \cdots \arrow[r] & 0 \arrow[r, "{0}"] & \mathbb{F}_{i_1} \arrow[r, "{\text{id}}"]\arrow[d, "{0}"] & \cdots \arrow[r, "{\text{id}}"]\arrow[d] & \mathbb{F}_{j_1} \arrow[r, "{0}"] \arrow[d, "{0}"] & 0 \arrow[r] & \cdots \\
			& & \cdots \arrow[r] & 0 \arrow[r] & \cdots \arrow[r] & 0 \arrow[r] & \cdots &
		\end{tikzcd} \\
		\\
		$\mathbf{I}^-_{[i_1, j_1] }\colon$
		\begin{tikzcd}
			& & \cdots \arrow[r] & 0 \arrow[r] \arrow[d, "{0}"] & \cdots \arrow[r]\arrow[d] & 0 \arrow[r]\arrow[d, "{0}"] & \cdots & \\
			& \cdots \arrow[r] & 0 \arrow[r, "{0}"'] & \mathbb{F}_{i_1} \arrow[r, "{\text{id}}"'] & \cdots \arrow[r, "{\text{id}}"'] & \mathbb{F}_{j_1} \arrow[r, "{0}"'] & 0 \arrow[r] & \cdots &
		\end{tikzcd}
	\end{tabular}
\end{center}

\begin{theorem}[Theorem 4.3.\ of~\cite{jacquard2021space}] \label{thm:jacquard_induced_matching}
	Let~$(V, W, \Phi)$ be a ladder persistence module where neither~$V$ nor~$W$ admit a pair of strictly nested bars. Then there are integers~$r_{J_1}^{J_2}, d_{J}^{+}, d_{K}^{-} \in \mathbb{N}$ for which
	\begin{align} \label{eq:morphism_decomposition}
		(V, W, \Phi) \cong \bigoplus_{J_1 \preceq J_2}\Big( \mathbf{R}_{J_1 }^{J_2} \Big)^{r_{J_1}^{J_2}} \oplus \bigoplus_{J} \Big( \mathbf{I}^+_{J} \Big)^{d_{J}^+} \oplus \bigoplus_{K} \Big( \mathbf{I}^-_{K} \Big)^{d_{K}^-}.
	\end{align}
\end{theorem}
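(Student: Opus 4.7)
The plan is to choose barcode bases for $V$ and $W$ and write $\Phi$ as the single block matrix $M_\Phi$ of \cref{prop:shape_of_Phi_single_matrix}, then reduce $M_\Phi$ to a canonical form using admissible changes of these barcode bases. The two groups of admissible operations are precisely the stabilisers $\text{Stab}(A^V)$ and $\text{Stab}(A^W)$ from \eqref{eq:stabiliser}, which act on $M_\Phi$ by column and row operations respectively. A direct analysis of these stabilisers shows that permissible column operations on $M_\Phi$ either permute and rescale generators of the same bar, or add to the column of a generator $x_J$ a scalar multiple of the column of $x_{J'}$ provided the underlying basis-level operation is compatible with the inner morphisms of $V$; by \cref{rem:how_mphi_can_map} this requires a certain overlap relation between $J$ and $J'$. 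An analogous statement holds for row operations from $\text{Stab}(A^W)$.

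Next I would sweep through the blocks $X_{J_2}^{J_1}$ in a lexicographic order on $(J_1, J_2)$ and, one block at a time, use the admissible operations to reduce each to barcode form (\cref{def:barcode_form}), producing a matrix whose only nonzero entries are isolated $1$'s. Processing the bars in the right order (say, from longest-and-earliest to shortest-and-latest) ensures that at each stage the operations needed to clear an entry in the current block do not reintroduce entries into already-reduced blocks. Once the reduction is complete, each $1$ in position $(x_{J_1}, x_{J_2})$ pairs one bar generator of $V$ with one of $W$ and contributes a summand $\mathbf{R}_{J_2}^{J_1}$; unpaired generators on the $V$ side contribute summands $\mathbf{I}^+_J$, and unpaired generators on the $W$ side contribute $\mathbf{I}^-_K$. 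Assembling these pairs yields the claimed direct sum decomposition and defines the multiplicities $r_{J_1}^{J_2}$, $d_J^+$, $d_K^-$.

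The main obstacle is the bookkeeping in the reduction step: one must guarantee that the column and row operations invoked to clear entries in the current block do not disturb blocks already in barcode form. This is exactly the role of the non-nestedness hypothesis. When $V$ (or $W$) has strictly nested bars $I \subset J$, additional stabiliser elements appear that send column operations on $x_J$ into column operations on $x_I$, and these extra couplings can cascade back into earlier blocks, preventing a clean simultaneous reduction. In the absence of nested bars, the overlap relation $\preceq$ and the lexicographic order on bars are sufficiently constrained that the sweep terminates with all blocks in canonical form, and the decomposition can be read off. Everything else is routine linear algebra that mirrors Gaussian elimination on the block matrix $M_\Phi$.
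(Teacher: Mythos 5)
Your high-level plan — write $\Phi$ as the single block matrix $M_\Phi$ of \cref{prop:shape_of_Phi_single_matrix}, identify the admissible column and row operations coming from the stabilisers of the barcode bases of $V$ and $W$, and sweep block by block until the matrix is in canonical form, reading off $\mathbf{R}$, $\mathbf{I}^+$, $\mathbf{I}^-$ summands — is indeed the strategy that underlies the cited Theorem 4.3 of Jacquard et al.\ and that this paper reuses (operations \ref{AO1}, \ref{AO2}, \ref{AO3} and the inductive sweep in \cref{l:induction_step}). However, your explanation of \emph{why} the non-nestedness hypothesis is needed is backwards, and this is the crux of the whole argument.

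You write that when $V$ has strictly nested bars $I \subset J$, ``additional stabiliser elements appear that send column operations on $x_J$ into column operations on $x_I$,'' and that these extra couplings disturb already-reduced blocks. The opposite is true. Adding a multiple of the column of $x_K$ to the column of $x_J$ is admissible exactly when $K \preceq J$ (operation \ref{AO2}), because only then does $x_J \mapsto x_J + c\,x_K$ commute with the inner morphisms of $V$; likewise rows are governed by \ref{AO3}. For strictly nested bars $I \subset J$ one has \emph{neither} $I \preceq J$ \emph{nor} $J \preceq I$, so no column (or row) operation between their generators is available at all. Nestedness therefore \emph{removes} operations rather than adding unwanted ones. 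The hypothesis in the theorem buys you the following: whenever two bars $I \leq J$ of the same module intersect (which is the situation you face when two generators share a nonzero block in $M_\Phi$), the absence of nesting forces $I \preceq J$, so the operation you need to clear the offending entry is guaranteed to exist. This is the dichotomy observed right after the definitions of $\leq$, $\preceq$, $\subset$ in \cref{sec:persistence_modules_and_barcode_bases}: intersecting but non-$\preceq$-comparable implies strictly nested. Without this, the sweep genuinely gets stuck (see the obstruction illustrated in \cref{ex:additional_assumption}), not merely muddied by extra couplings.

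Two smaller points: the target for the overall matrix is \emph{partial matching form} (\cref{def:partial_matching form}, at most one $1$ per row and per column globally), not barcode form (\cref{def:barcode_form}) block by block — putting each block in barcode form separately does not preclude two blocks in the same row strip each contributing a $1$ to the same row. And the sweep order is not ``longest-and-earliest to shortest-and-latest''; the blocks in $M_\Phi$ are indexed by the lexicographic order (earliest birth, then earliest death, i.e.\ shortest first for a fixed birth), and the argument processes columns left to right and, within a column, blocks from the bottom up, so that every block to the left and below the current one is already in matching form before you touch it. With those corrections — and the corrected account of how $\preceq$ governs \ref{AO2}/\ref{AO3} — your outline matches the intended proof.
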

The right side of Equation~\eqref{eq:morphism_decomposition} is called the~\emph{ladder decomposition} of morphism~$\Phi$. The isomorphism is given by a change of barcode bases of the domain and codomain. The matrix representation of~$\Phi$ in the bases in which it decomposes as a ladder persistence module is in partial matching form.
\begin{definition} \label{def:partial_matching form}
	A matrix is in \emph{partial matching form} if there is at most one~$1$ in each row and each column, with all the other entries being~$0$.
\end{definition}
We will often refer to the partial matching form as the matching form for short. Since the ladder decomposition is unique up to an automorphism, the matching form is unique up to (compositions of) barcode basis changes acting only among generators of the same bar.

Jacquard et al.\ show with examples that if either~$\Barc{V}$ or~$\Barc{W}$ contain a pair of strictly nested bars then such a decomposition need not exist. The focus of our paper is to refine this and show that, under additional hypotheses on~$\Phi$, a decomposition will exist even when there are nested bars.

	\section{Ladder Decompositions and Interleavings} \label{sec:morphisms_and_induced_matchings}

In this section the theory of ladder decompositions of morphisms in the case of interleavings is developed further. Interleavings come in pairs of morphisms, which we (with slight abuse of notation) call~$\delta$-invertible morphisms and are interesting in their own right. \cref{thm:Xi_constant} relaxes the assumptions of the Ladder Decomposition Theorem of \cite{jacquard2021space} for~$\delta$-invertible morphisms, while \cref{cor:entries_in_matching_form} summarises the relation between ladder decompositions of the two morphisms making an interleaving pair.

\subsection{Interleavings and~\texorpdfstring{$\delta$}\ -Invertible Morphisms of Persistence Modules} \label{subsec:delta-interleavings}

Standard pseudo-metrics on the space of barcodes (or persistence diagrams) such as the bottleneck and Wasserstein distances enable us to compare barcodes of a pair of any two pointwise finite-dimensional persistence modules. However, one can also define a pseudo-metric on the space of persistence modules, called an \emph{interleaving distance}~\cite{interleavings_original_paper}. It is algebraic in nature and its definition does not require the computation of the interval decomposition. An integral role in its definition is played by~\emph{$\delta$-interleavings}.

\begin{definition}
	The \emph{shift} of a persistence module~$V$ is a persistence module defined as
	\begin{align*}
		&V(\delta)_t = V_{t+\delta} \\
		&v(\delta)_{t_1, t_2} = v_{t_1+\delta, t_2+\delta}.
	\end{align*}
\end{definition}	
\begin{definition}
	Persistence modules $V$ and $W$ are $\delta$-\emph{interleaved} if there exist morphisms~$\Phi\colon V \to W(\delta)$ and~$\Psi\colon W \to V(\delta)$ such that the diagrams
	\[
		\begin{tikzcd}
			V_t \arrow[rr, "{v_{t, t+2\delta}}"] \arrow[rd, "\Phi_t"'] & & V_{t+2\delta} \\
			& W_{t+\delta} \arrow[ru, "\Psi_{t+\delta}"'] &              
		\end{tikzcd}
		\hspace{0.5cm} and \hspace{0.5cm}
		\begin{tikzcd}
			& V_{t+\delta} \arrow[rd, "\Phi_{t+\delta}"] &               \\
			W_{t} \arrow[rr, "{w_{t, t+2\delta}}"'] \arrow[ru, "\Psi_t"] & & W_{t+2\delta}
		\end{tikzcd}
	\]
	commute. The pair $(\Phi,\Psi)$ is called a \emph{$\delta$-interleaving}.
\end{definition}
Note that two persistence modules are $0$-interleaved if and only if they are isomorphic. As a consequence, we often consider the parameter $\delta$ to measure how far from isomorphic two persistence modules can be.
\begin{definition}
	The \emph{interleaving distance}~$d_I$ is a pseudo-metric on the space of persistence modules defined with
	$$d_I(V, W) = \text{inf } \{ \delta \ | \ V\text{ and }W\text{ are }\delta \text{-interleaved} \}.$$
\end{definition}
The Isometry Theorem~\cite[Theorem~$3.5$]{bauer2013induced} states that for~$1$-parameter persistence modules the interleaving distance always equals the bottleneck distance between the barcodes. However, the interleaving distance does not require the existence of a barcode. In fact, it can be defined for any persistence module, including multiparameter ones.

\begin{example}\label{ex:running_ex_1}
	Let us introduce an interleaving which we will use as a running example. Let~$V$ and~$W$ be persistence modules
	\begin{center}
		\begin{tikzcd}
			V\colon & 0 \arrow[r]
			& \R \arrow[r, "\pinjfirst"]
			& \R^2 \arrow[r, "\text{Id}"]
			& \R^2 \arrow[r, "\text{Id}"]
			& \R^2 \arrow[r, "\vfirst"]
			& \R^3 \arrow[r, "\vsecond"]
			& \R \arrow[r, "\text{Id}"]
			& \R \arrow[r, "\text{Id}"]
			& \R \arrow[r]
			& 0, \\
			W\colon & 0 \arrow[r]
			& 0 \arrow[r]
			& \R^2 \arrow[r, "\text{Id}"]
			& \R^2 \arrow[r, "\text{Id}"]
			& \R^2 \arrow[r, "\text{Id}"]
			& \R^2 \arrow[r, "\text{Id}"]
			& \R^2 \arrow[r, " \pprojsecond"]
			& \R^1 \arrow[r]
			& 0 \arrow[r]
			& 0,
		\end{tikzcd}
	\end{center}
	with barcode bases given by the unit vectors of the vector spaces~$\R^n$.
	\begin{figure}[ht]
		\centering
		\resizebox{0.5 \linewidth}{!}{\begin{tikzpicture}
	\draw[thick,->] (0,0) -- (8,0) node[anchor=north west] {};
	\foreach \x in {0,1,2,3,4,5, 6, 7}
	\draw (\x cm,1pt) -- (\x cm,-1pt) node[anchor=north] {$\x$};
	
	\draw[black!60!green, very thick] (0, 5) -- (4, 5);
	\draw[black!60!green, very thick] (1, 4.5) -- (7, 4.5);
	\foreach \p in {(0, 5), (4, 5), (1, 4.5), (7, 4.5), (4, 4)}
	\fill[black!60!green] \p circle(.1);
	\path (8, 5) node[black!60!green]{\large $[0,4]$};
	\path (8, 4.5) node[black!60!green]{\large $[1,7]$};
	\path (8, 4) node[black!60!green]{\large $[4,4]$};
	
	\draw[white!10!blue, very thick] (1, 2) -- (5, 2);
	\draw[white!10!blue, very thick] (1, 1.5) -- (6, 1.5);
	\foreach \p in {(1, 2), (5, 2), (1, 1.5), (6, 1.5)}
	\fill[white!10!blue] \p circle(.1);
	\path (8, 2) node[white!10!blue]{\large $[1,5]$};
	\path (8, 1.5) node[white!10!blue]{\large $[1,6]$};
	
	\path (-1.4, 4.5) node[black!60!green]{\Large $\Barc{V}$};
	\path (-1.4, 1.75) node[white!10!blue]{\Large $\Barc{W}$};
\end{tikzpicture}}
		\caption{The barcodes of modules~$V$ and~$W$ in \cref{ex:running_ex_1}.}
		\label{fig:running_example}
	\end{figure}
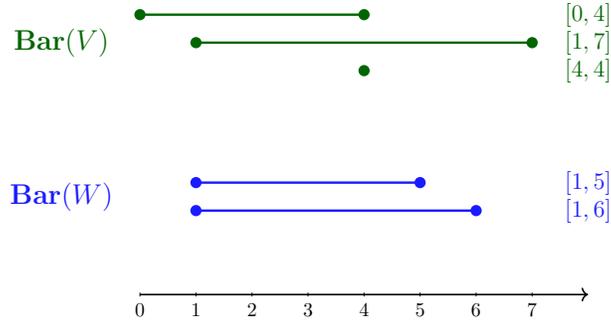
	Their barcodes are
	\begin{align*}
		\Barc{V} = \{ [0,4], [1,7], [4,4] \} \qquad \text{and} \qquad \Barc{W} = \{ [1,5], [1,6] \},
	\end{align*}
	where the bars have been written in the lexicographical order. Bar generators associated with our choice of barcode bases are
	\begin{align*}
		& x^V_{[0,4]} = \{ \vec{e_1}^{(0)}, \vec{e_1}^{(1)}, \vec{e_1}^{(2)}, \vec{e_1}^{(3)}, \vec{e_1}^{(4)} \},\\
		& x^V_{[1,7]} = \{ \vec{e_2}^{(1)}, \vec{e_2}^{(2)}, \vec{e_2}^{(3)}, \vec{e_2}^{(4)}, \vec{e_1}^{(5)}, \vec{e_1}^{(6)}, \vec{e_1}^{(7)} \}, \\
		& x^V_{[4,4]} = \{ \vec{e_3}^{(4)} \}, \\
		& \\
		& x^W_{[1,5]} =  \{ \vec{e_1}^{(1)}, \vec{e_1}^{(2)}, \vec{e_1}^{(3)}, \vec{e_1}^{(4)}, \vec{e_1}^{(5)} \}, \\
		& x^W_{[1,6]} =  \{ \vec{e_2}^{(1)}, \vec{e_2}^{(2)}, \vec{e_2}^{(3)}, \vec{e_2}^{(4)}, \vec{e_2}^{(5)}, \vec{e_1}^{(6)} \},
	\end{align*}
	where~$\vec{e}^{(i)}_j$ denotes the~$j$-th unit vector in either~$V_i$ or~$W_i$. Define morphisms~$\Phi\colon V \to W(1)$ and~${\Psi:W \to V(1)}$ with components
	\begin{align*}
		&\begin{aligned}
			\Phi_0 = \begin{bmatrix} 2 \\ 0  \end{bmatrix},
		\end{aligned}
		&&\begin{aligned}
			\Phi_{1,2,3} = \begin{bmatrix} 2 & 1 \\ 0 & 1 \end{bmatrix},
		\end{aligned}
		&&\begin{aligned}
			\Phi_4 = \begin{bmatrix} 2 & 1 & 1 \\ 0 & 1 & 0 \end{bmatrix},
		\end{aligned}
		&&\begin{aligned}
			\Phi_5 = \begin{bmatrix} 1 \end{bmatrix},
		\end{aligned}
		&&\begin{aligned}
			\Phi_{6,7} = 0,
		\end{aligned} \\
		&\begin{aligned}
			\Psi_{0,7} = 0,
		\end{aligned}
		&&\begin{aligned}
			\Psi_{1,2} = \begin{bmatrix} \half & -\half \\ 0 & 1 \end{bmatrix},
		\end{aligned}
		&&\begin{aligned}
			\Psi_{3} = \begin{bmatrix} \half & -\half \\ 0 & 1 \\ 0 & 0 \end{bmatrix},
		\end{aligned}
		&&\begin{aligned}
			\Psi_{4,5} = \begin{bmatrix} 0 & 1 \end{bmatrix},
		\end{aligned}
		&&\begin{aligned}
			\Phi_6 = \begin{bmatrix} 1 \end{bmatrix},
		\end{aligned}
	\end{align*}
	making a~$1$-interleaving pair
	\begin{center}
		\begin{tikzcd}
			V\colon & 0 \arrow[r]
			& \R \arrow[r] \arrow[rdd, white!10!blue, "\Phi_0"', near start]
			& \R^2 \arrow[r] \arrow[rdd, white!10!blue, "\Phi_1"', near start]
			& \R^2 \arrow[r] \arrow[rdd, white!10!blue, "\Phi_2"', near start]
			& \R^2 \arrow[r] \arrow[rdd, white!10!blue, "\Phi_3"', near start]
			& \R^3 \arrow[r] \arrow[rdd, white!10!blue, "\Phi_4"', near start]
			& \R \arrow[r] \arrow[rdd, white!10!blue, "\Phi_5"', near start]
			& \R \arrow[r] \arrow[rdd, white!10!blue, "\Phi_6"', near start]
			& \R \arrow[r] \arrow[rdd, white!10!blue, "\Phi_7 = 0", near end]
			& 0, \\
			&&&&&& \\
			W\colon & 0 \arrow[r] 
			& 0 \arrow[r] \arrow[ruu, black!60!green, "\Psi_0", near start]
			& \R^2 \arrow[r] \arrow[ruu, black!60!green, "\Psi_1", near start]
			& \R^2 \arrow[r] \arrow[ruu, black!60!green, "\Psi_2", near start]
			& \R^2 \arrow[r] \arrow[ruu, black!60!green, "\Psi_3", near start]
			& \R^2 \arrow[r] \arrow[ruu, black!60!green, "\Psi_4", near start]
			& \R^2 \arrow[r] \arrow[ruu, black!60!green, "\Psi_5", near start]
			& \R \arrow[r] \arrow[ruu, black!60!green, "\Psi_6", near start]
			& 0 \arrow[r] \arrow[ruu, black!60!green, "\Psi_7 = 0"', near end]
			& 0.
		\end{tikzcd}
	\end{center}
	Their respective single-matrix representations from~\cref{prop:shape_of_Phi_single_matrix} are
	\begin{align*}
		\begin{aligned}
			M_\Phi = \begin{bmatrix} 2 & 1 & 1 \\ 0 & 1 & 0  \end{bmatrix}
		\end{aligned} \qquad \text{and} \qquad
		\begin{aligned}
			M_\Psi = \begin{bmatrix} \half & -\half \\ 0 & 1 \\ 0 & 0 \end{bmatrix}.
		\end{aligned}
	\end{align*}
	We verify that these morphisms truly make an interleaving pair, by considering how their composition maps generators. Let us illustrate this on two cases in our example, the image of the generator~$x^W_{[1,6]}$ with~$\Phi(\delta = 1) \circ \Psi$ and the image of the generator~$x^V_{[4,4]}$ with~$\Psi(\delta = 1) \circ \Phi$. First, write
	\begin{align*}
		\Phi(\delta = 1) \circ \Psi(x^W_{[1,6]}) &= \Phi(\delta = 1)(x^V_{[1,7](1)} - \half x^V_{[0,4](1)}) \\
		&= x^W_{[1, 5](2)} + x^W_{[1, 6](2)} - \half (2 x^W_{[1, 5](2)}) \\
		&= x^W_{[1,6](2)}.
	\end{align*}
	where~$[i, j](\delta)$ again denotes the shifted interval~$[i-\delta, j-\delta].$
	The desired property
	\begin{align*}
		\Phi(\delta = 1) \circ \Psi(x^W_{[1,6]}) = w_2(x^W_{[1,6]}),
	\end{align*}
	where~$w_2$ is the family~$\{w_{t, t+2}\}_t$ of inner morphisms in~$W$, is easily obtained. In the second case the property
	\begin{align} \label{eq:step_in_ex1}
		\Psi(\delta = 1) \circ \Phi (x^V_{[4,4]}) = v_2 (x^V_{[4,4]}) = 0
	\end{align}
	is not as obvious. Write
	\begin{align*}
		\Psi(\delta = 1) \circ \Phi(x^V_{[4,4]}) &= \Psi(\delta = 1)(x^W_{[1,5](1)}) \\
		&= \half x^V_{[0,4](2)},
	\end{align*}
	Since the shifted interval~$[0,4](2) = [-2, 2]$ does not intersect with~$[4,4]$, the image of~$x^V_{[4,4]}$ with the composition does not contain~$x^V_{[0,4](2)}$ in its support. By discarding it, we obtain the desired Property~\eqref{eq:step_in_ex1}.
\end{example}

It is often useful to decouple the definition of an interleaving morphism by singling one of the morphisms out and implying the existence of the other without committing to a choice of it. This motivates the definition of a~\emph{$\delta$-invertible morphism}.
\begin{definition}
	A morphism~$\Phi: V \to W$ is~\emph{$\delta$-invertible} if there exists another morphism~$\Psi: W \to V(2\delta)$ such that the diagrams
	\begin{center}
		\begin{tikzcd}
			V_t \arrow[rr, "{v_{t, t+2\delta}}"] \arrow[d, "\Phi_t"'] & & V_{t+2\delta} \\
			W_{t} \arrow[rru, "\Psi_{t}"'] & &            
		\end{tikzcd}
		\hspace{0.5cm}and \hspace{0.5cm}
		\begin{tikzcd}
			& & V_{t+2\delta} \arrow[d, "\Phi_{t+2\delta}"]               \\
			W_{t} \arrow[rr, "{w_{t, t+2\delta}}"'] \arrow[rru, "\Psi_t"] & & W_{t+2\delta}
		\end{tikzcd}
	\end{center}
	commute. Morphism~$\Psi$ is called a~\emph{$\delta$-inverse} of~$\Phi$.
\end{definition}
As before, parameter~$\delta$ measures how close a morphism is to being an isomorphism, as a~$0$-invertible morphism is simply an isomorphism.

\begin{remark}[Correspondence between~$\delta$-interleavings and~$\delta$-invertible morphisms] \label{r:interleaving_vs_invertible}
	Let us make the relationship between the notions of a~$\delta$-invertible morphism and a~$\delta$-interleaving pair explicit. A morphism~$\Phi \colon V \to W$ is~$\delta$-invertible if and only if, when regarded as a morphism~$\Phi' \colon V \to W'(\delta)$ with~$W' = W(-\delta)$, it is half of a~$\delta$-interleaving. In fact, any~$\delta$-inverse of~$\Phi$ gives (after the necessary shifts) a morphism making a~$\delta$-interleaving pair with~$\Phi'$. 

\end{remark}
From here on, we will work with~$\delta$-invertible morphisms and obtain results that hold for morphisms in a~$\delta$-interleaving pair via the correspondence in \cref{r:interleaving_vs_invertible}.

\begin{example}\label{ex:running_ex_1_1}
	Take the~$1$-interleaving pair~$(\Phi, \Psi)$ from \cref{ex:running_ex_1}. The induced~$1$-invertible morphism~$\Phi^{(1)}$ maps from~$V$ to~$W'=W(1)$. Since we replace~$W$ with its shift, the barcode is now
	\begin{align*}
		\Barc{W'} = \{ [0,4], [0,5]\}
	\end{align*}
	and the bar generators in our choice of barcode basis are
	\begin{align*}
		& x^{W'}_{[0,4]} =  \{ \vec{e_1}^{(0)}, \vec{e_1}^{(1)}, \vec{e_1}^{(2)}, \vec{e_1}^{(3)}, \vec{e_1}^{(4)} \}, \\
		& x^{W'}_{[1,6]} =  \{ \vec{e_2}^{(0)}, \vec{e_2}^{(1)}, \vec{e_2}^{(2)}, \vec{e_2}^{(3)}, \vec{e_2}^{(4)}, \vec{e_2}^{(5)} \}.
	\end{align*}
	Notice, however, that the matrix representations of the components~$\Phi_i$ are equal to the matrix representations~$\Phi^{(1)}_i$, and so are the single matrix representations
	\begin{align*}
		M_\Phi = M_{\Phi^{(1)}}.
	\end{align*}
	The same holds for the~$1$-inverse~$\Psi^{(1)}$ of~$\Phi^{(1)}$.
\end{example}

\subsection{Nestedness Condition for Ladder Decomposition of a~\texorpdfstring{$\delta$}\ -Invertible Morphisms} \label{subsec:decomp_of_interleavings}

Since~$\delta$-invertible morphism are special examples of morphisms of persistence modules, \cref{thm:jacquard_induced_matching} applies to them. However, the assumption that neither~$\Barc{V}$ nor~$\Barc{W}$ admit strictly nested bars restricts the use of the theorem to a small family of morphisms. To see this clearly, observe \cref{fig:pd_nestedness}. Luckily, the special properties of~$\delta$-invertible morphism allow us to loosen the requirements of \cref{thm:jacquard_induced_matching}. In order to do that, we analyse nested bars in barcodes of persistence modules.
\begin{figure}[h!]
	\centering
	\resizebox{0.2 \linewidth}{!}{\begin{tikzpicture}[
	dot/.style = {circle, fill=black, inner sep=0pt, minimum size=5pt, node contents={}}]
	\fill[fill = blue!20!white] (1,1) -- (3,3) -- (1,3);
	\fill[fill = green!20!white] (1,3) -- (1,5) -- (0,5) -- (0,3);
	\draw[thick,->] (-0.2,0) -- (5,0);
	\draw[thick,->] (0, -0.2) -- (0, 5);
	\draw (0, 0) -- (5, 5);
	
	\path(1,3) node[dot];
\end{tikzpicture}}
	\caption{Observe a point in a persistence diagram that corresponds to a bar~$J$. Any bar~$K$ that is strictly nested with~$J$ corresponds to a point in one of the shaded regions of the diagram: if~$J \subset K$ it is in the green, and if~$K \subset J$ it is in the blue region. This illustrates that many diagrams have strictly nested points.}
	\label{fig:pd_nestedness}
\end{figure}
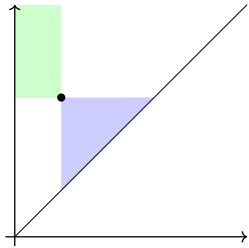

\begin{definition} \label{def:Ksi}
	For a persistence module~$M$ define a constant
	$$\Xi(M) = \min_{[a, b] \subset [c, d] \in \Barc{M}} \min \{ |a-c|,\ |b-d| \}$$
	and call it the \emph{nestedness of persistence module}~$M$. Note that the minimum loops over all pairs of strictly nested bars in~$\Barc{M}$ and is defined to equal~$\infty$ when such bars do not exist. It therefore takes values in~$(0, \infty]$.
\end{definition}
\begin{example} \label{example_nr_3}
	Let us compute nestedness for the instructive example in \cref{fig:nestedness_example_basic}. The barcode consists of bars
	\begin{align*}
		I = [0,8], \qquad J = [1, 5], \qquad K = [1, 8], \qquad L = [3, 5].
	\end{align*}
	\begin{figure}[ht]
		\centering
		\resizebox{0.5 \linewidth}{!}{\begin{tikzpicture}
	\draw[thick,->] (-1,0) -- (9,0);
	\foreach \x in {0,1,2,3,4,5, 6, 7, 8}
		\draw (\x cm,1pt) -- (\x cm,-1pt) node[anchor=north] {$\x$};

	\draw[black!60!green, very thick] (0, 4) -- (8, 4) node[midway, above]{I};
	\draw[black!60!green, very thick] (1, 2) -- (8, 2) node[midway, above]{K};
	\draw[black!60!green, very thick] (1, 3) -- (5, 3) node[midway, above]{J};
	\draw[black!60!green, very thick] (3, 1) -- (5, 1) node[midway, above]{L};
	\foreach \p in {(0,4),(8,4),(1,2),(8,2),(1,3),(5,3),(3,1),(5,1)}
		\fill[black!60!green] \p circle(.08);
\end{tikzpicture}}
		\caption{Example of a barcode with nestedness~$1$.}
		\label{fig:nestedness_example_basic}
	\end{figure}
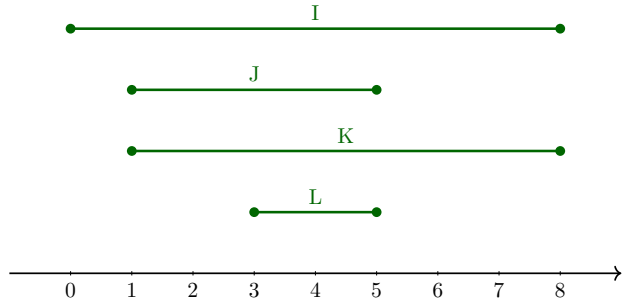
	The minimum in the definition loops over pairs~$J \subset I$,~$L \subset K$, and~$L \subset I$. The minimum distance between endpoints for each pair respectively is~$1$,~$2$ and~$3$. Perhaps not intuitively the nestedness is defined as the smallest of these values,~$1$. Two further examples of barcodes with different nestedness are shown in \cref{fig:nestedness_examples}.
	\begin{figure}[ht]
		\centering
		\subfloat[\centering Barcode with nestedness~$3$.]{{\resizebox{0.46\linewidth}{!}{
					\begin{tikzpicture}
	
	\draw[thick,->] (-1,0) -- (9,0);
	\foreach \x in {0,1,2,3,4,5, 6, 7, 8}
	\draw (\x cm,1pt) -- (\x cm,-1pt) node[anchor=north] {$\x$};
	
	\draw[black!60!green, very thick] (0, 2) -- (8, 2);
	\draw[black!60!green, very thick] (3, 1) -- (5, 1);
	\foreach \p in {(0,2),(8,2),(3,1),(5,1)}
	\fill[black!60!green] \p circle(.08);
\end{tikzpicture}} }}%
		\qquad
		\subfloat[\centering Barcode with nestedness~$\infty$.]{{\resizebox{0.46\linewidth}{!}{
				\begin{tikzpicture}
	\draw[thick,->] (-1,0) -- (9,0);
	\foreach \x in {0,1,2,3,4,5, 6, 7, 8}
	\draw (\x cm,1pt) -- (\x cm,-1pt) node[anchor=north] {$\x$};
	
	\draw[black!60!green, very thick] (0, 3) -- (7, 3);
	\draw[black!60!green, very thick] (2, 2) -- (7, 2);
	\draw[black!60!green, very thick] (4, 1) -- (8, 1);
	\foreach \p in {(0,3),(7,3),(2,2),(7,2), (4,1),(8,1)}
	\fill[black!60!green] \p circle(.08);
\end{tikzpicture}} }}%
		\caption{Further examples of barcodes with different nestedness.}%
		\label{fig:nestedness_examples}%
	\end{figure}
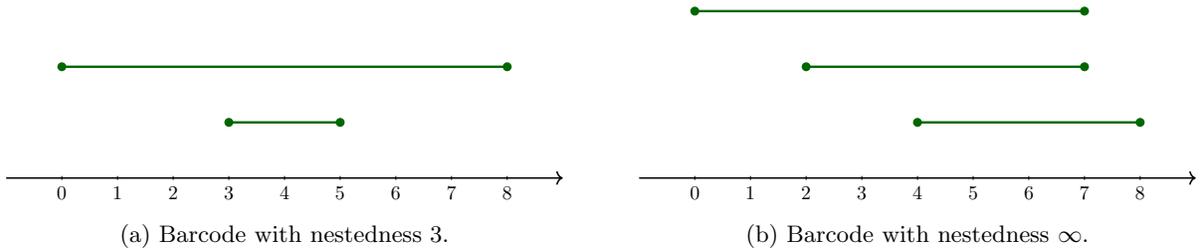
\end{example}

The main result of this paper is that, as long as the nestedness is not ``too small'', we can still obtain the ladder decomposition for a~$\delta$-invertible morphism.
\main
\begin{remark}
	Whenever there are no nested bars, i.e.\ $\min(\Xi(V), \Xi(W)) = \infty$, \cref{thm:Xi_constant} restates \cref{thm:jacquard_induced_matching}.
\end{remark}
\begin{remark}[Sufficient but not necessary condition]
	Pick your favorite example of a module~$M$ with nested bars, i.e.~$\Xi(M)< \infty$, and observe the identity morphism~$\text{Id} \colon M \to M$. It is a~$\delta$-invertible morphism for all~$\delta \in (0, \infty)$, and it does not satisfy the assumptions of \cref{thm:Xi_constant} for any~$\delta \geq \half \Xi(M)$. However, it is obvious that it admits a ladder decomposition even for those choices of~$\delta$.
\end{remark}

The proof of \cref{thm:Xi_constant} follows a similar approach as the proof of \cref{thm:jacquard_induced_matching} in~\cite{jacquard2021space}. By \cref{prop:shape_of_Phi_single_matrix}, morphism~$\Phi$ can be represented as a single block matrix
\begin{align*}
	M_\Phi = 
	\begin{bmatrix}
		X_{[0, 0]}^{[0, 0]} & X_{[0, 0]}^{[0, 1]}   & \ldots & X_{[0, 0]}^{[0, l]} & 0 & \ldots & 0 & \ldots & 0 \\
		& X_{[0, 1]}^{[0, 1]} & \ldots & X_{[0, 1]}^{[0, l]} & X_{[0,1]}^{[1,1]} & \ldots & X_{[0,1]}^{[1,l]}  & \ldots & 0 \\
		& & \ddots & \vdots & \vdots & & \vdots & & \vdots \\
		& & & X_{[0, l]}^{[0, l]} & 0  & \ldots & X_{[0,l]}^{[1,l]} & \ldots & 0 \\
		& & & & X_{[1,1]}^{[1,1]}  & \ldots & X_{[1,1]}^{[1,l]} & \ldots & 0 \\
		& & & & & \ddots & \vdots & & \vdots\\
		& & & & & & X_{[1,l]}^{[1,l]} & \ldots & 0 \\
		& & & & & & & \ddots & \vdots \\
		& & & & & & & & X_{[l,l]}^{[l,l]}
	\end{bmatrix}.
\end{align*}
This matrix will be inductively reduced to matching form. During the reduction we are allowed to use only the matrix operations whose result is the same morphism written in another barcode basis. These operations correspond to the actions of the stabilisers~$\Stab{\{v_{t, t+1}\}_t}$ and~$\Stab{\{w_{t, t+1}\}_t}$ (as in Equation~\eqref{eq:stabiliser}) on the barcode bases of the domain and codomain respectively. As a consequence of the properties of the stabilisers, namely that their elements commute with the inner morphisms, the admissible matrix operations (as deduced in~\cite{jacquard2021space}) are
	\begin{enumerate}
		\item[\namedlabel{AO1}{\textbf{AO1}}] any operation between columns corresponding to the same bar~$J$, or between rows corresponding to the same bar~$J$,
		\item[\namedlabel{AO2}{\textbf{AO2}}] modifying~$C_{J}$ using~$C_{K}$ whenever~$K \preceq J$,
		\item[\namedlabel{AO3}{\textbf{AO3}}] modifying~$R_{J}$ using~$R_{K}$ whenever~$J \preceq K$.
	\end{enumerate}

Let us introduce a few technical lemmas, which will be used in the proof.

\begin{lemma} \label{l:intersecting_support_means_intersecting_bars}
	Let~$\Phi \colon V \to W$ be a~$\delta$-invertible morphism and~$J$ and~$K$ two bars in~$\Barc{V}$. If~$\supp \Phi(x_{J})$ and~$\supp \Phi(x_{K})$ have a non-empty intersection, then~$J \cap K \neq \emptyset.$ Similarly, if generators of bars~$J$ and~$K$ in~$\Barc{W}$ both lie in~$\supp \Phi(x_{L})$ for some bar~$L \in \Barc{V}$, then~$J \cap K \neq \emptyset.$
\end{lemma}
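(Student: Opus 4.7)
The key observation is that the statement reduces to a purely combinatorial computation with the overlapping relation~$\preceq$ once we invoke \cref{prop:shape_of_interleaved_barcode}. That lemma already tells us that non-zero entries of~$M_\Phi$ only occur between bars related by~$\preceq$, so in both halves of the statement we are essentially asking: if two intervals both overlap (in the sense of~$\preceq$) with a common third interval, do they intersect? The plan is to answer this by unpacking the endpoint inequalities defining~$\preceq$.

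For the first part, suppose some bar generator~$x_M$ with~$M = [m_1, m_2] \in \Barc{W}$ lies in~$\supp \Phi(x_J) \cap \supp \Phi(x_K)$ for~$J = [j_1, j_2]$ and~$K = [k_1, k_2]$ in~$\Barc{V}$. By \cref{prop:shape_of_interleaved_barcode} we have $M \preceq J$ and~$M \preceq K$, which by the definition of~$\preceq$ yields
\begin{align*}
m_1 \leq j_1 \leq m_2 \leq j_2 \quad \text{and} \quad m_1 \leq k_1 \leq m_2 \leq k_2.
\end{align*}
Then~$\max(j_1, k_1) \leq m_2 \leq \min(j_2, k_2)$, so~$m_2 \in J \cap K$ and in particular~$J \cap K \neq \emptyset$.

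The second part is handled symmetrically. If generators of~$J, K \in \Barc{W}$ both lie in~$\supp \Phi(x_L)$ for some~$L = [l_1, l_2] \in \Barc{V}$, then \cref{prop:shape_of_interleaved_barcode} gives~$J \preceq L$ and~$K \preceq L$. Writing out the endpoint inequalities as above now yields~$\max(j_1, k_1) \leq l_1 \leq \min(j_2, k_2)$, so~$l_1 \in J \cap K$.

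There is no real obstacle here -- the only thing to watch is that~$\preceq$ is not transitive, so one should not try to chain the two relations directly, but instead extract the shared endpoint (here~$m_2$, respectively~$l_1$) that witnesses membership in the intersection. The $\delta$-invertibility hypothesis is not actually used in this lemma; it is stated in this form because it is the setting of the section and because the result will be applied to $\delta$-invertible morphisms in subsequent arguments.
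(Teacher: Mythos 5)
Your proof is correct and follows essentially the same route as the paper's: both invoke \cref{prop:shape_of_interleaved_barcode} and then unpack the endpoint inequalities of~$\preceq$ to exhibit a common point of~$J$ and~$K$. Your version is a touch cleaner, since identifying the witness~$m_2$ (resp.~$l_1$) directly avoids the paper's without-loss-of-generality ordering step, and your closing remark that~$\delta$-invertibility is not actually used is also correct — the lemma holds for any morphism.
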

\begin{proof}
Let us begin with the first statement. Without loss of generality, assume~$J = [c, d] \leq K =[a, b].$ By \cref{prop:shape_of_interleaved_barcode} a bar~$[i, j]$ that is in the support of both~$\Phi(x_{[c, d]})$ and~$\Phi(x_{[a, b]})$ must satisfy
\begin{align}
	& i \leq c, \nonumber\\
	& c \leq j \leq d, \label{star}\\
	& i \leq a, \text{ and} \nonumber\\
	& a \leq j \leq b. \label{starstar}
\end{align}
If~$d < a$, Inequalities~\eqref{star} and~\eqref{starstar} cannot be simultaneously satisfied.
Since such a bar~$[i, j]$ exists by assumption, the inequality~$a \leq d$ holds and the bars~$[c, d]$ and~$[a, b]$ have a non-empty intersection.

The second statement is even easier to prove. Set~$J = [c, d]$ and $K =[a, b].$ Then the bar~$L = [i, j]$ in the support of whose image~$x_{J}$ and~$x_{K}$ lie, must satisfy
\begin{align*}
	a, c \leq i \ \text{ and } \ i \leq b, d
\end{align*}
by \cref{prop:shape_of_interleaved_barcode}. It follows that~$i \in J \cap K \neq \emptyset$.
\end{proof}

\begin{lemma} \label{l:image_of_nested_bars}
	Let~$\Phi\colon V \to W$ be a~$\delta$-invertible morphism between persistence modules~$V$ and~$W$, and let~$[c, d] \subset [a, b]$ be a pair of nested bars in~$\Barc{V}$. Then any bar~$[i, j] \in \Barc{W}$ whose generator is contained in both~$\supp \Phi(x_{[a, b]})$ and~$\supp \Phi(x_{[c, d]})$ must satisfy
	\begin{align} 
		i \leq a \qquad &\text{and} \qquad c \leq j \leq d. \label{eq:first_statement} \\
		\intertext{As a consequence, the length of any such bar~$[i, j]$ must be at least~$c-a$.}
		\intertext{Similarly, any bar~$[i, j] \in \Barc{V}$ for which~$\supp \Phi(x_{[i, j]})$ contains generators~$x_{[a, b]}$ and~$x_{[c, d]}$ of nested bars~${[c, d] \subset [a, b] \in \Barc{W}}$ must satisfy}
		c \leq i \leq d \qquad &\text{and} \qquad b \leq j. \label{eq:second_statement}
	\end{align}
	As a consequence, the length of any such bar~$[i, j]$ must be at least~$b-d$.
\end{lemma}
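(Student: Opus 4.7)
The plan is to observe that this lemma is essentially a direct consequence of the support/overlap lemma (\cref{prop:shape_of_interleaved_barcode}) applied twice, once for each of the two bars whose images share the generator~$x_{[i,j]}$. The~$\delta$-invertibility hypothesis plays no role in this particular statement; it is carried along only because the lemma is stated in the context of the proof of \cref{thm:Xi_constant}.

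For the first statement, I would unpack both conditions: since~$x_{[i,j]} \in \supp \Phi(x_{[a,b]})$, \cref{prop:shape_of_interleaved_barcode} gives~$[i,j] \preceq [a,b]$, i.e.~$i \leq a \leq j \leq b$. Similarly~$x_{[i,j]} \in \supp \Phi(x_{[c,d]})$ yields~$[i,j] \preceq [c,d]$, i.e.~$i \leq c \leq j \leq d$. Now I would use the nested hypothesis~$a < c \leq d < b$ to combine these inequalities and keep only the strongest ones: from the two upper bounds on~$i$ the stronger is~$i \leq a$; from the two lower bounds on~$j$ the stronger is~$c \leq j$; and from the two upper bounds on~$j$ the stronger is~$j \leq d$. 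This gives exactly~$i \leq a$ and~$c \leq j \leq d$, and the length bound~$j - i \geq c - a$ is then immediate.

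For the second statement the argument is symmetric: both~$x_{[a,b]}$ and~$x_{[c,d]}$ lying in~$\supp \Phi(x_{[i,j]})$ yield~$[a,b] \preceq [i,j]$ and~$[c,d] \preceq [i,j]$ respectively, which unpack as~$a \leq i \leq b \leq j$ and~$c \leq i \leq d \leq j$. Using~$a < c \leq d < b$, the stronger bounds are~$c \leq i$,~$i \leq d$, and~$b \leq j$, producing the stated conclusion and the length bound~$j - i \geq b - d$.

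There is essentially no obstacle here — the entire content is a careful bookkeeping of the overlap relations. The only thing to be mindful of is keeping track of which inequalities are strengthened by the nested hypothesis, to avoid accidentally citing a weaker bound. Given that the proof is this short, I would expect the authors to simply state it as a one-line consequence of \cref{prop:shape_of_interleaved_barcode}.
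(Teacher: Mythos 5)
Your proposal is correct and matches the paper's argument exactly: apply \cref{prop:shape_of_interleaved_barcode} twice to get~$i \leq a \leq j \leq b$ and~$i \leq c \leq j \leq d$, then use the nesting~$a < c \leq d < b$ to keep the stronger bounds, with the symmetric argument for the second statement. Your observation that~$\delta$-invertibility is not actually used here is also accurate.
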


\begin{proof}
	Both statements are a simple consequence of \cref{prop:shape_of_interleaved_barcode}. For the bars in~\eqref{eq:first_statement} it states that
	\begin{align*}
		i \leq a \leq j \leq b \qquad &\text{and} \qquad i \leq c \leq j \leq d \\
		\intertext{(observe \cref{subfig:a_image_preimage_of_nested_bars}). Since the bars are nested, these requirements can be summarised as}
		i \leq a \qquad &\text{and} \qquad c \leq j \leq d.
	\end{align*}
	It follows readily that~$j-i \geq c-a$.
	\begin{figure}[t]
		\centering
		\begin{subfigure}[t]{.47\textwidth}
			\centering
			\resizebox{\linewidth}{!}{\begin{tikzpicture}[
	dot/.style = {circle, fill=black, inner sep=0pt, minimum size=5pt, node contents={}},
	dotorange/.style = {circle, fill=white!10!blue, inner sep=0pt, minimum size=5pt, node contents={}},
	bargreen/.style = {very thick, black!60!green},
	barorange/.style = {very thick, white!10!blue},
	pomozna/.style = {very thin, gray!50!, dashed},
	]
	
	\fill[green!20!white] (-2,-5.5) rectangle (0,-3.5);
	\fill[blue!15!white] (2,-5.5) rectangle (6,-3.5);
	
	\path (-1,-4.5) node[black!60!green]{$birth$};
	\path (4,-4.5) node[white!10!blue]{$death$};
	
	\coordinate (a) at (0,0);
	\coordinate (b) at (7,0);
	\coordinate (c) at (2,-1);
	\coordinate (d) at (6,-1);
	\coordinate (cc) at (2, -5.5);
	\coordinate (dd) at (6,-5.5);
	\coordinate (aa) at (0,-5.5);
	\coordinate (bb) at (7,-5.5);
	
	\draw[pomozna] (c) -- (cc);
	\draw[pomozna] (d) -- (dd);
	\draw[pomozna] (a) -- (aa);
	\draw[pomozna] (b) -- (bb);
	
	\draw (a) -- (b);
	\draw (c) -- (d);
	\path (c) node[dot,label=above:$c$];
	\path (d) node[dot,label=above:$d$];
	\path (b) node[dot,label=above:$b$];
	\path (a) node[dot,label=above:$a$];
	
	\path (0,-4) node[black]{$\Big]$};
	\path (0,-4) node[black]{$\Big[$};
	\path (0,-3.4) node[black]{$a$};
	\draw[bargreen] (-2, -4) -- (0,-4);
	\path (7,-4) node[black]{$\Big]$};
	\path (7,-3.4) node[black]{$b$};
	\draw[barorange] (0,-4) -- (7, -4);

	\path (2,-5) node[black]{$\Big]$};
	\path (2,-5) node[black]{$\Big[$};
	\path (2,-4.4) node[black]{$c$};
	\draw[bargreen] (-2, -5) -- (2,-5);
	\path (6,-5) node[black]{$\Big]$};
	\path (6,-4.4) node[black]{$d$};
	\draw[barorange] (2,-5) -- (6, -5);
	
	\draw[stealth-stealth, black!30!red] (0, -2) -- (2, -2); 
	\path (1,-1.7) node[black!30!red]{$c-a$};
\end{tikzpicture}}
			\caption{The restrictions for a bar in the support of both~$\Phi(x_{[a, b]})$ and~$\Phi(x_{[c, d]})$, where~$\Phi$ is a~$\delta$-invertible morphism and~$[c, d] \subset [a, b]$ are strictly nested bars. The restrictions for birth point are marked in green, while the restrictions for death point are marked in blue, with the second line from the bottom showing the restrictions induced by bar~$[a, b]$ and the bottom one those induced by~$[c, d]$. The interval in which both are satisfied is marked with a square in the respective colour.}
			\label{subfig:a_image_preimage_of_nested_bars}
		\end{subfigure}
		\hfill
		\begin{subfigure}[t]{.47\textwidth}
			\centering
			\resizebox{\linewidth}{!}{\begin{tikzpicture}[
	dot/.style = {circle, fill=black, inner sep=0pt, minimum size=5pt, node contents={}},
	dotorange/.style = {circle, fill=white!10!blue, inner sep=0pt, minimum size=5pt, node contents={}},
	bargreen/.style = {very thick, black!60!green},
	barorange/.style = {very thick, white!10!blue},
	pomozna/.style = {very thin, gray!50!, dashed},
	]
	
	\fill[green!20!white] (2,2.5) rectangle (6,4.5);
	\fill[blue!15!white] (7,2.5) rectangle (9,4.5);
	
	\path (4,3.5) node[black!60!green]{$birth$};
	\path (8.05,3.5) node[white!10!blue]{$death$};
	
	\coordinate (a) at (0,0);
	\coordinate (b) at (7,0);
	\coordinate (c) at (2,-1);
	\coordinate (d) at (6,-1);
	\coordinate (cc) at (2, 4.5);
	\coordinate (dd) at (6,4.5);
	\coordinate (aa) at (0,4.5);
	\coordinate (bb) at (7,4.5);
	
	\draw[pomozna] (c) -- (cc);
	\draw[pomozna] (d) -- (dd);
	\draw[pomozna] (a) -- (aa);
	\draw[pomozna] (b) -- (bb);
	
	\draw (a) -- (b);
	\draw (c) -- (d);
	\path (c) node[dot,label=above:$c$];
	\path (d) node[dot,label=above:$d$];
	\path (b) node[dot,label=above:$b$];
	\path (a) node[dot,label=above:$a$];
	
	\path (0,4) node[black]{$\Big[$};
	\path (0,4.6) node[black]{$a$};
	\draw[bargreen] (0, 4) -- (7,4);
	\path (7,4) node[black]{$\Big]$};
	\path (7,4) node[black]{$\Big[$};
	\path (7,4.6) node[black]{$b$};
	\draw[barorange] (7,4) -- (9, 4);

	\path (2,3) node[black]{$\Big[$};
	\path (2,3.6) node[black]{$c$};
	\draw[bargreen] (2,3) -- (6, 3);
	\path (6,3) node[black]{$\Big]$};
	\path (6,3.6) node[black]{$d$};
	\draw[barorange] (6,3) -- (9, 3);
	
	\draw[stealth-stealth, black!30!red] (6, 1) -- (7, 1); 
	\path (6.5, 1.6) node[black!30!red]{$b-d$};
\end{tikzpicture}}
			\caption{The restrictions for a bar with both~$x_{[a, b]}$ and~$x_{[c, d]}$ in the support of its image with a~$\delta$-invertible morphism, where~$[c, d] \subset [a, b]$ are strictly nested bars. The restriction for birth point are marked in green, while the restrictions for death point are marked in blue, with the top line showing the restrictions induced by bar~$[a, b]$ and the second line from the top showing those induced by~$[c, d]$. The interval in which both are satisfied is marked with a square in the respective colour.}
			\label{subfig:b_image_preimage_of_nested_bars}
		\end{subfigure}
		\label{fig:image_preimage_of_nested_bars}
		\caption{The implications of \cref{prop:shape_of_interleaved_barcode} for a pair of nested bars~$[c, d] \subset [a, b]$ in~$\Barc{V}$ (case (a)) or~$\Barc{W}$ (case (b)).}
	\end{figure}
	Statement~(\ref{eq:second_statement}) can be proved in a similar way (observe \cref{subfig:b_image_preimage_of_nested_bars}).
\end{proof}

\begin{lemma} \label{l:existence_of_matched_bar}
	Let~$\Phi\colon V \to W$ be a~$\delta$-invertible morphism and~$\Psi:W \to V(2\delta)$ its~$\delta$-inverse. For any generator~$x_{[a, b]}$ of a bar~$[a, b] \in \Barc{V}$ of length at least~$2\delta$ there exists at least one generator~$x_{[i, j]}$ of bar~$[i, j] \in \Barc{W}$ such that
	\begin{align}
		x_{[i, j]} \in \supp \Phi(x_{[a, b]}) \label{eq:line1_supp},\\
		x_{[a, b]}(2\delta) \in \supp \Psi(x_{[i, j]}). \label{eq:line2_supp}
	\end{align}
	Similarly, for any generator~$x_{[i, j]}$ of a bar~$[i, j] \in \Barc{W}$ of length at least~$2\delta$ there exists at least one generator~$x_{[a, b]}$ of bar~$[a, b] \in \Barc{V}$ such that~\eqref{eq:line1_supp} and~\eqref{eq:line2_supp} hold.
	
	Bars~$[a, b] \in \Barc{V}$ and~$[i, j] \in \Barc{W}$ related in this way must satisfy
	\begin{align*}
		a-2\delta \leq i \leq a \leq i+2\delta \qquad \text{and} \qquad b-2\delta \leq j \leq b \leq j+2\delta.
	\end{align*}
\end{lemma}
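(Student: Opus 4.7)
The plan is to exploit the defining property of a $\delta$-invertible morphism, namely that $\Psi \circ \Phi$ coincides with the family of inner morphisms $v_{2\delta} = \{v_{t, t+2\delta}\}_t$, and then read off the existence of a matched bar by comparing the coefficient of a specific bar generator on both sides of this identity.

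First I would fix a generator $x_{[a,b]}$ of a bar of length $\geq 2\delta$ and observe that because $a+2\delta \leq b$, the inner morphism sends the full bar generator to a nonzero element; more precisely, $v_{2\delta}(x_{[a,b]}) = x_{[a,b]}(2\delta)$ when interpreted as a bar generator of the shifted bar $[a-2\delta, b-2\delta]$ in $V(2\delta)$ (this is the content of having length at least $2\delta$; otherwise the image would be zero and the argument fails). Using the single-matrix representation of \cref{prop:shape_of_Phi_single_matrix} and the formula for the image of a bar generator in~\eqref{eq:image_of_generator}, I expand
\begin{align*}
    \Psi \circ \Phi (x_{[a,b]}) = \sum_{x_K} M_\Phi(x_K, x_{[a,b]}) \, \Psi(x_K) = \sum_{x_K,\, x_L} M_\Phi(x_K, x_{[a,b]}) \, M_\Psi(x_L, x_K) \, x_L,
\end{align*}
where $x_K$ ranges over generators of bars in $\Barc{W}$ and $x_L$ over generators of bars in $\Barc{V(2\delta)}$. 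Setting this equal to $x_{[a,b]}(2\delta)$ and comparing the coefficient of the specific generator $x_L = x_{[a,b]}(2\delta)$ yields
\begin{align*}
    \sum_{x_K} M_\Phi(x_K, x_{[a,b]}) \, M_\Psi(x_{[a,b]}(2\delta), x_K) = 1.
\end{align*}
Since the sum is nonzero, at least one term is nonzero, i.e.\ there exists a bar generator $x_{[i,j]}$ of $\Barc{W}$ with $M_\Phi(x_{[i,j]}, x_{[a,b]}) \neq 0$ and $M_\Psi(x_{[a,b]}(2\delta), x_{[i,j]}) \neq 0$, which is precisely conditions~\eqref{eq:line1_supp} and~\eqref{eq:line2_supp} by \cref{def:support}.

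The second (dual) existence statement follows by exchanging the roles of $\Phi$ and $\Psi$, $V$ and $W$: start from a bar $[i,j] \in \Barc{W}$ of length $\geq 2\delta$, use that $\Phi \circ \Psi$ equals the family of inner morphisms in $W$ shifted by $2\delta$, and repeat the same coefficient-comparison argument.

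Finally, the inequalities on the endpoints follow directly from \cref{prop:shape_of_interleaved_barcode} applied to the two support conditions: from $x_{[i,j]} \in \supp \Phi(x_{[a,b]})$ we get $[i,j] \preceq [a,b]$, i.e.\ $i \leq a$ and $j \leq b$, while from $x_{[a,b]}(2\delta) \in \supp \Psi(x_{[i,j]})$ (a morphism into $V(2\delta)$, so the relevant bar on the codomain side is $[a-2\delta, b-2\delta]$) we get $[a-2\delta, b-2\delta] \preceq [i,j]$, i.e.\ $a - 2\delta \leq i$ and $b - 2\delta \leq j$. Combining these yields the asserted bounds $a - 2\delta \leq i \leq a$ and $b - 2\delta \leq j \leq b$, from which $a \leq i+2\delta$ and $b \leq j+2\delta$ are immediate.

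The only mildly delicate point is keeping the shift bookkeeping straight, particularly identifying the bar generator $x_{[a,b]}(2\delta)$ in $V(2\delta)$ with the right-shifted inner-morphism image of $x_{[a,b]}$ and making sure the length hypothesis $b - a \geq 2\delta$ is exactly what guarantees this image is nonzero; everything else is an immediate application of the ladder/matrix formalism and \cref{prop:shape_of_interleaved_barcode}.
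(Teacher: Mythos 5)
Your proof is correct and follows essentially the same approach as the paper's (which is stated as a brief observation and relies on the reader to fill in exactly the details you supply). The coefficient-comparison argument, the symmetric duplication for the other direction, and the derivation of the endpoint inequalities from \cref{prop:shape_of_interleaved_barcode} are all right.

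One step deserves more care, and the paper itself flags the relevant subtlety in \cref{rem:matrix_multiplication}. Your middle expansion
\begin{align*}
    \Psi \circ \Phi (x_{[a,b]}) = \sum_{x_K,\, x_L} M_\Phi(x_K, x_{[a,b]}) \, M_\Psi(x_L, x_K) \, x_L
\end{align*}
is the \emph{matrix-product} expression, not the expansion of the composition morphism applied to $x_{[a,b]}$. As that remark explains, these disagree in general: terms $x_L$ for which $J_L \not\preceq [a,b]$ may appear with a nonzero coefficient in the matrix product yet must be zero in the image of $x_{[a,b]}$ under $\Psi \circ \Phi$. So you cannot set this whole formal sum equal to $x_{[a,b]}(2\delta)$. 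What you may legitimately do is equate the single coefficient of $x_L = x_{[a,b]}(2\delta)$, and for that entry the matrix product and the composition agree precisely because the length hypothesis $b - a \geq 2\delta$ guarantees $[a-2\delta, b-2\delta] \preceq [a,b]$ (this is where the hypothesis bites, not merely in ensuring the inner morphism is nonzero). If you insert a line invoking \cref{rem:matrix_multiplication} and the length condition at that point, the argument is airtight; as written, it silently passes over a step the paper explicitly warns about.
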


\begin{proof}
	The containment statements are a simple consequence of the fact that the compositions~$\Phi \circ \Psi$ and~$\Psi \circ \Phi$ map generators belonging to bars of length at least~$2\delta$ to themselves. The rest is a consequence of applying \cref{prop:shape_of_interleaved_barcode} to~\eqref{eq:line1_supp} and~\eqref{eq:line2_supp} and combining the obtained inequalities.
\end{proof}

\begin{remark} \label{r:matches_of_long_bars}
	Given a~$\delta$-interleaving pair~$(\Phi, \Psi)$ between~$V$ and~$W$, \cref{l:existence_of_matched_bar} implies that for any generator~$x_{[a, b]}$ of a bar~$[a, b] \in \Barc{V}$ with~$b-a \geq 2\delta$ there exists a generator~$x_{[i, j]}$ of bar~$[i, j] \in \Barc{W}$ such that
	\begin{align}
		x_{[i, j]}(\delta) \in \supp \Phi(x_{[a, b]}),\\
		x_{[a, b]}(\delta) \in \supp \Psi(x_{[i, j]}).
	\end{align}
	Further, bar~$[i, j]$ must satisfy
	\begin{align*}
		|i - a| \leq \delta \qquad \text{and} \qquad |j-b| \leq \delta.
	\end{align*}
	Similar holds for any generator~$x_{[i, j]}$ of a bar with length at least~$2\delta$.
\end{remark}

\begin{lemma}\label{l:induction_step}
	Let~$M_\Phi$ be the matrix of a~$\delta$-invertible morphism~$\Phi\colon V \to W$ for~${\delta < \half \min(\Xi(V), \Xi(W))}$ written in barcode bases~$\mathcal{B}_V$ and~$\mathcal{B}_W$. Let~$X_{[i, j]}^{[c, d]}$ be a sub-matrix of~$M_\Phi$ such that all the sub-matrices to the left and below it have already been reduced to matching form with operations~\ref{AO1},~\ref{AO2} and~\ref{AO3}. Then~$X_{[i, j]}^{[c, d]}$ can also be reduced using these operations.
\end{lemma}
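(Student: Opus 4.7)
The plan is to execute the inductive step in reducing $M_\Phi$ to matching form, mirroring the non-nested argument of \cite{jacquard2021space} but invoking $\delta$-invertibility to overcome the obstructions caused by nested bars. By \cref{prop:shape_of_interleaved_barcode} the block $X_{[i,j]}^{[c,d]}$ is zero unless $[i,j] \preceq [c,d]$, so assume this containment; each entry records the coefficient of a generator $x_{[i,j]}$ in the image $\Phi(x_{[c,d]})$. The goal is to bring $X_{[i,j]}^{[c,d]}$ into partial matching form using operations of type \ref{AO1} (within the columns indexed by $[c,d]$ and the rows indexed by $[i,j]$) while repairing any collateral damage to previously reduced blocks via \ref{AO2} and \ref{AO3}.

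The core of the argument is Gaussian elimination inside $X_{[i,j]}^{[c,d]}$: pick a pivot, use \ref{AO1} row and column swaps to move it into position, scale to $1$, then clear the rest of its row and column within the block with further \ref{AO1} operations, and iterate on the remaining submatrix. A column operation on bar $[c,d]$ simultaneously modifies every block $X_{K'}^{[c,d]}$ in the same block column, and a row operation on bar $[i,j]$ modifies every block $X_{[i,j]}^{J'}$ in the same block row. Blocks among these that lie below or to the left of $X_{[i,j]}^{[c,d]}$ are already in matching form by the hypothesis of the lemma and will typically be disturbed. I would then re-reduce each disturbed block, using \ref{AO2} (adding multiples of columns indexed by bars $K'' \preceq [c,d]$) or \ref{AO3} (adding multiples of rows indexed by bars $J'' \succeq [i,j]$), and verify that these secondary corrections only propagate to blocks not yet touched by the outer induction.

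The main obstacle is showing that the required corrective \ref{AO2}/\ref{AO3} pivots exist and that the cascade of corrections terminates. This is exactly where the hypothesis $\delta < \half \min(\Xi(V), \Xi(W))$ is decisive. A strictly nested pair $[c',d'] \subset [c,d]$ in $\Barc{V}$ whose images under $\Phi$ share a common bar $[i',j'] \in \Barc{W}$ would, in general, block the reduction; but \cref{l:image_of_nested_bars} forces any such $[i',j']$ to have length at least $\Xi(V) > 2\delta$, and then \cref{l:existence_of_matched_bar} together with \cref{r:matches_of_long_bars} matches $[i',j']$ rigidly to a specific bar in $\Barc{V}$. This matched partner supplies precisely the pivot column (or, by the symmetric argument in $\Barc{W}$, the pivot row) needed to carry out the corrective \ref{AO2} or \ref{AO3} step. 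Because each correction is ultimately grounded in an already-matched long bar whose matching is forced by $\delta$-invertibility, the corrections cannot feed back into $X_{[i,j]}^{[c,d]}$ and do not create fresh disturbances in newly reduced blocks, so the process terminates with $X_{[i,j]}^{[c,d]}$ and all previously reduced blocks simultaneously in matching form.
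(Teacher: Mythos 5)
Your plan diverges from the paper's proof in a way that introduces a genuine gap. The paper does \emph{not} start with Gaussian elimination inside $X_{[i,j]}^{[c,d]}$ and then repair collateral damage. It first classifies each non-zero entry of $X_{[i,j]}^{[c,d]}$ by whether its row and column are clean inside the already-reduced region $A$ (type 1), its row carries a $1$ to the left in $A$ (type 2), or its column carries a $1$ below in $A$ (type 3). Entries of types 2 and 3 are zeroed \emph{first}, directly, by \ref{AO2} and \ref{AO3} operations; only afterwards are \ref{AO1} operations applied, and at that point they act only on clean rows and columns and so leave $A$ untouched. The order is essential: the column $C'$ used to clear a type-2 entry carries its unique $1$ exactly in the row being cleared, so adding a multiple of $C'$ to $C$ changes nothing else inside $A$. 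There is no cascade of repairs, and consequently no termination issue to address.

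Your version instead begins with \ref{AO1} Gaussian elimination, concedes that this ``will typically disturb'' the already-reduced blocks, and promises to re-reduce them afterwards. This is the gap. Repairing a disturbed block $X_{[i,j]}^{[a,b]}$ to the left by an \ref{AO3} row operation acts on a row of $[i,j]$ across \emph{all} columns, including those of $[c,d]$ --- it feeds straight back into the block you just put into matching form. The assertion that ``the corrections cannot feed back into $X_{[i,j]}^{[c,d]}$ and do not create fresh disturbances'' is stated without argument, and I do not see why it should hold. You also misread the role of the nestedness hypothesis: you suggest \cref{l:image_of_nested_bars} and \cref{l:existence_of_matched_bar} ``supply the pivot column'' for a corrective step, but in the paper they drive a contradiction. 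If the $1$ sitting to the left of $X_{[i,j]}^{[c,d]}$ lies in the column of a bar $[a,b]$ strictly nested with $[c,d]$, one produces a third bar $[k,l]$, lex-smaller than both, whose column must also carry a non-zero entry in the same row --- two non-zero entries in one row of a fully reduced region, which is impossible. So $\delta<\tfrac12\min(\Xi(V),\Xi(W))$ rules out the obstructing nesting entirely, guaranteeing the $\preceq$-relation that makes \ref{AO2} (resp.\ \ref{AO3}) admissible; it does not manufacture pivots for a repair cascade.
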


\begin{proof}	
	Let~$A$ denote the sub-matrix containing all the sub-matrices appearing to the left and downward of~$X_{[i, j]}^{[c, d]}$ (observe \cref{fig:induction_step}). Since all other sub-matrices in~$A$ have already been reduced there is at most one~$1$ in each row and column of~$A$ outside of~$X_{[i, j]}^{[c, d]}$. A non-zero entry of~$X_{[i, j]}^{[c, d]}$ in row~$R$ and column~$C$ falls in (at least) one of the following categories:
	\begin{enumerate}
		\item The entries of row~$R$ and column~$C$ in~$A$, that are not in~$X_{[i, j]}^{[c, d]}$, are zero.
		\item There is a~$1$ in the row~$R$ to the left of~$X_{[i, j]}^{[c, d]}$.
		\item There is a~$1$ in the column~$C$ bellow~$X_{[i, j]}^{[c, d]}$.
	\end{enumerate}
	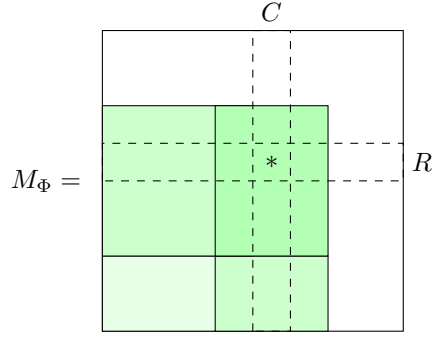
\begin{figure}[ht]
		\centering
%
%

\begin{tikzpicture}
	\draw[] (1,1) rectangle (5,5);
	\draw[fill = green!10!white] (1,1) rectangle (3,2);
	\draw[fill = green!20!white] (2.5, 1) rectangle (4,2);
	\draw[fill = green!20!white] (1, 2) rectangle (3,4);
	\draw[fill = green!30!white] (2.5, 2) rectangle (4,4);
	\path (3.25, 3.25) node[black]{$*$};
	\draw[dashed] (1,3) rectangle (5,3.5);
	\draw[dashed] (3,1) rectangle (3.5,5);
	\path (0.25, 3) node[black]{$M_\Phi =$};
	\path (5.25, 3.25) node[black]{$R$};
	\path (3.25, 5.25) node[black]{$C$};
\end{tikzpicture}
		\caption{Areas in green denote the sub-matrix~$A$ containing all sub-matrices to the left and below sub-matrix~$X_{[i, j]}^{[c, d]}$, marked with the darkest shade. The non-zero entry~$M_\Phi (R, C)$ is denoted with~$*$.}
		\label{fig:induction_step}
	\end{figure}
	When reducing~$X_{[i, j]}^{[c, d]}$, start with entries of type~$2$ and~$3$. We will justify that we can set them to zero using matrix operations of type~\ref{AO2} and~\ref{AO3}. After this is done, the non-zero entries will all be of type~$1$. The remaining steps in the reduction can be performed using operations of type~\ref{AO1}.
	
	To show we can set an entry of type~$2$ to zero, assume~$1$ in row~$R$ lies in a column belonging to a bar~$[a, b]$. Let us prove, that~$[a, b] \preceq [c, d]$ and we can use~\ref{AO2} to reduce~$M_\Phi(R, C)$. We already know that~$[a, b] \leq [c, d]$, and by \cref{l:intersecting_support_means_intersecting_bars} the bars~$[a, b]$ and~$[c, d]$ have a non-empty intersection. It remains to be proven that~$[c, d] \not\subset [a, b]$. For that purpose, assume~$[c, d] \subset [a, b]$ and observe \cref{subfig:a_lemma317}. By \cref{l:image_of_nested_bars} the bar~$[i, j]$, which is in the support of both~$\Phi(x_{[c, d]})$ and~$\Phi(x_{[a, b]})$, must satisfy
	\allowdisplaybreaks
	\begin{align}
		i \leq a \qquad &\text{and} \qquad c \leq j \leq d, \label{eq:ij_restrictions} \\
		\intertext{and be of length at least~$c-a \geq \Xi(V) > 2\delta.$ By \cref{l:existence_of_matched_bar} there exists a bar~$[k, l] \in \Barc{V}$ for which}
		x_{[i, j]} \in \supp \Phi(x_{[k, l]}) \qquad &\text{and} \qquad x_{[k, l]}(2\delta) \in \supp \Psi(x_{[i, j]}), \label{eq:mutual_inclusion_in_image} \\
		i \leq k \leq i +2\delta \qquad &\text{and} \qquad j \leq l \leq j+2\delta, \label{eq:kl_restriction}
		\intertext{where~$\Psi:W \to V(2\delta)$ is an arbitrary choice of a~$\delta$-inverse of~$\Phi$. The combined inequalities~\eqref{eq:ij_restrictions} and~\eqref{eq:kl_restriction} give us restrictions on~$[k, l]$:}	
		k \leq a+2\delta \qquad &\text{and} \qquad c \leq l \leq d+2\delta. \nonumber
	\end{align}
	Because~$\Xi(V) > 2\delta$ we further have~$l \leq d+2\delta < b$. Now notice that~$k$ cannot be larger than~$a$: if it is, then~$[k, l] \subset [a, b]$ and~$k-a \leq 2\delta$, which violates the assumption that~$\delta < \half \Xi(V)$. Similarly,~$l$ cannot be larger than~$d$, since~$d < l \leq d+2\delta$ implies~$[c, d] \subset [k, l]$, which violates the same assumption. As a consequence of these observations, the bar~$[k, l]$ cannot be equal to~$[a, b]$ or~$[c, d]$.
	
	To summarise, there is a bar~$[k, l] \neq [a, b], [c, d]$ satisfying~\eqref{eq:mutual_inclusion_in_image}, which appears before~$[a, b]$ and~$[c, d]$ in the order~$\leq.$ Because~$x_{[i, j]} \in \supp \Phi(x_{[k, l]})$, the entry in row~$R$ and column belonging to~$[k, l]$ must be non-zero. This means there are two non-zero entries in row~$R$ of sub-matrix~$A$ to the left of~$X_{[i, j]}^{[c, d]}$, which cannot be true, since all sub-matrices in~$A$ except for~$X_{[i, j]}^{[c, d]}$ are reduced. Since we obtained a contradiction, bars~$[c, d]$ and~$[a, b]$ are not strictly nested. We can use operations of type~\ref{AO2} to reduce the entry~$M_\Phi(R, C)$.
	
		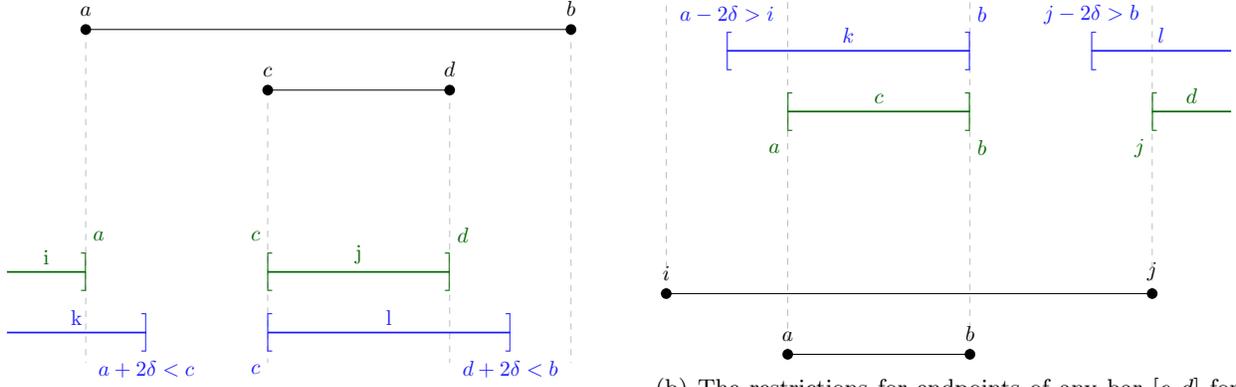
\begin{figure}[ht]
		\centering
		\begin{subfigure}[ht]{.47\textwidth}
			\centering
			\resizebox{\linewidth}{!}{\begin{tikzpicture}[
	dot/.style = {circle, fill=black, inner sep=0pt, minimum size=5pt, node contents={}},
	dotorange/.style = {circle, fill=white!10!blue, inner sep=0pt, minimum size=5pt, node contents={}},
	bargreen/.style = {thick, black!60!green},
	barorange/.style = {thick, white!10!blue},
	pomozna/.style = {very thin, gray!50!, dashed},
	zelena/.style = {black!60!green},
	rdeca/.style = {white!10!blue}
	]
	
	\coordinate (a) at (-1,0);
	\coordinate (b) at (7,0);
	\coordinate (c) at (2,-1);
	\coordinate (d) at (5,-1);
	\coordinate (cc) at (2, -5.5);
	\coordinate (dd) at (5,-5.5);
	\coordinate (aa) at (-1,-5.5);
	\coordinate (bb) at (7,-5.5);
	
	\draw[pomozna] (c) -- (cc);
	\draw[pomozna] (d) -- (dd);
	\draw[pomozna] (a) -- (aa);
	\draw[pomozna] (b) -- (bb);
	
	\draw (a) -- (b);
	\draw (c) -- (d);
	\path (c) node[dot,label=above:$c$];
	\path (d) node[dot,label=above:$d$];
	\path (b) node[dot,label=above:$b$];
	\path (a) node[dot,label=above:$a$];
	
	\path (-1,-4) node[zelena]{$\Big]$};
	\path (-1,-3.4) node[zelena, anchor=west]{$a$};
	\path (2,-4) node[zelena]{$\Big[$};
	\path (2,-3.4) node[zelena, anchor=east]{$c$};
	\path (5,-4) node[zelena]{$\Big]$};
	\path (5,-3.4) node[zelena, anchor=west]{$d$};
	\draw[bargreen] (-2.3, -4) -- (-1,-4) node[midway, above]{i};
	\draw[bargreen] (2,-4) -- (5, -4) node[midway, above]{j};
	
	\path (0,-5) node[rdeca]{$\Big]$};
	\path (0,-5.6) node[rdeca]{$a+2\delta<c$};
	\path (2,-5) node[rdeca]{$\Big[$};
	\path (2,-5.6) node[rdeca, anchor=east]{$c$};
	\path (6,-5) node[rdeca]{$\Big]$};
	\path (6,-5.6) node[rdeca]{$d+2\delta<b$};
	\draw[barorange] (-2.3, -5) -- (0,-5) node[midway, above]{k};
	\draw[barorange] (2,-5) -- (6, -5) node[midway, above]{l};
\end{tikzpicture}}
			\caption{The restrictions for endpoints of any bar~$[i, j]$ whose generator is in the support of~$\Phi(x_{[a, b]})$ and~$\Phi(x_{[c, d]})$ for a~$\delta$-invertible morphism~$\Phi$ are shown in green. In blue are the restrictions for the endpoints of a bar~$[k, l]$ satisfying~\eqref{eq:mutual_inclusion_in_image} and~\eqref{eq:kl_restriction}, which exists by \cref{l:existence_of_matched_bar}.}
			\label{subfig:a_lemma317}
		\end{subfigure}
		\hfill
		\begin{subfigure}[ht]{.47\textwidth}
			\centering
			\resizebox{\linewidth}{!}{\begin{tikzpicture}[
	dot/.style = {circle, fill=black, inner sep=0pt, minimum size=5pt, node contents={}},
	dotorange/.style = {circle, fill=white!10!blue, inner sep=0pt, minimum size=5pt, node contents={}},
	bargreen/.style = {thick, black!60!green},
	barorange/.style = {thick, white!10!blue},
	pomozna/.style = {very thin, gray!50!, dashed},
	zelena/.style = {black!60!green},
	rdeca/.style = {white!10!blue}
	]

	\coordinate (a) at (0,0);
	\coordinate (b) at (8,0);
	\coordinate (c) at (2,-1);
	\coordinate (d) at (5,-1);
	\coordinate (cc) at (2, 5);
	\coordinate (dd) at (5,5);
	\coordinate (aa) at (0,5);
	\coordinate (bb) at (8,5);
	
	\draw[pomozna] (c) -- (cc);
	\draw[pomozna] (d) -- (dd);
	\draw[pomozna] (a) -- (aa);
	\draw[pomozna] (b) -- (bb);
	
	\draw (a) -- (b);
	\draw (c) -- (d);
	\path (c) node[dot,label=above:$a$];
	\path (d) node[dot,label=above:$b$];
	\path (b) node[dot,label=above:$j$];
	\path (a) node[dot,label=above:$i$];
	
	\path (1,4) node[rdeca]{$\Big[$};
	\path (1,4.6) node[rdeca]{$a-2\delta > i$};
	\draw[barorange] (1, 4) -- (5,4) node[midway, above]{$k$};
	\path (5,4) node[rdeca]{$\Big]$};
	\path (5,4.6) node[rdeca, anchor=west]{$b$};
	\path (7,4) node[rdeca]{$\Big[$};
	\path (7,4.6) node[rdeca]{$j-2\delta >b$};
	\draw[barorange] (7,4) -- (9.3, 4)node[midway, above]{$l$};

	\path (2,3) node[zelena]{$\Big[$};
	\path (2,2.4) node[zelena, anchor=east]{$a$};
	\path (5,3) node[zelena]{$\Big]$};
	\path (5,2.4) node[zelena, anchor=west]{$b$};
	\draw[bargreen] (2,3) -- (5, 3) node[midway, above]{$c$};
	\path (8,3) node[zelena]{$\Big[$};
	\path (8,2.4) node[zelena, anchor=east]{$j$};
	\draw[bargreen] (8,3) -- (9.3, 3) node[midway, above]{$d$};
\end{tikzpicture}}
			\caption{The restrictions for endpoints of any bar~$[c, d]$ for which the support~$\Phi(x_{[c, d]})$ contains generators~$x_{[a, b]}$ and~$x_{[i, j]}$ for a~$\delta$-invertible morphism~$\Phi$ are shown in green. In blue are the restrictions for the endpoints of a bar~$[k, l]$ satisfying~\eqref{eq:mutual_inclusion_in_image_2} and~\eqref{eq:kl_restriction_2}, which exists by \cref{l:existence_of_matched_bar}.}
			\label{subfig:b_lemma317}
		\end{subfigure}
		\label{fig:lemma317}
		\caption{Restrictions for endpoints of bars used in the proof of \cref{l:induction_step}.}
	\end{figure}
	
	The fact that the entries of the third type can be set to zero using operations of type~\ref{AO3} can be proven in a similar way. Assume~$1$ in column~$C$ lies in a row belonging to a bar~${[a, b] \in \Barc{W}}$. We know that~${[i, j] \leq [a, b]}$, and by \cref{l:intersecting_support_means_intersecting_bars} the bars~$[a, b]$ and~$[i, j]$ have a non-empty intersection. As before, assume~$[a, b] \subset [i, j]$ and observe \cref{subfig:b_lemma317}. By \cref{l:image_of_nested_bars} the bar~$[c, d] \in \Barc{V}$, for which~$\supp \Phi(x_{[c, d]})$ contains both~$x_{[i, j]}$ and~$x_{[a, b]}$, must satisfy 
	\begin{align}
		a \leq c \leq b \qquad &\text{and} \qquad j \leq d, \label{eq:ij_restrictions_2} \\
		\intertext{and be of length at least~$j-b \geq \Xi(V) > 2\delta.$ By \cref{l:existence_of_matched_bar} there exists a bar~$[k, l] \in \Barc{W}$ for which}
		x_{[k, l]} \in \supp \Phi(x_{[c, d]}) \qquad &\text{and} \qquad x_{[c, d]}(2\delta) \in \supp \Psi(x_{[k, l]}), \label{eq:mutual_inclusion_in_image_2} \\
		c-2\delta \leq k \leq c \qquad &\text{and} \qquad d-2\delta \leq l \leq d. \label{eq:kl_restriction_2}
	\end{align}
	Notice that~$l$ cannot be smaller than~$j$: if it is, the combined restrictions~\eqref{eq:ij_restrictions_2} and~\eqref{eq:kl_restriction_2} give us~$j-2\delta \leq l < j$ and~$a-2\delta \leq k \leq b$. As a consequence~$[k, l] \subset [i, j]$ for~$|j-l| < 2\delta$, which violates the assumption that~$\delta < \half \Xi(V)$. Similarly,~$k$ cannot be smaller than~$a$, since~$a-2\delta \leq k < a$ implies~$[a, b] \subset [k, l]$, which violates the same assumption.
	
	To summarise, there is a bar~$[k, l] \neq [a, b], [i, j]$ satisfying~\eqref{eq:mutual_inclusion_in_image_2}, which appears after~$[a, b]$ and~$[i, j]$ in the order~$\leq.$ Because~$x_{[k, l]} \in \supp \Phi(x_{[c, d]})$, the entry in column~$C$ and row belonging to~$[k, l]$ must be non-zero. This means there are two non-zero entries in column~$C$ of sub-matrix~$A$ below~$X_{[i, j]}^{[c, d]}$, which cannot be, since all sub-matrices in~$A$ except for~$X_{[i, j]}^{[c, d]}$ are reduced. Since we obtained a contradiction, bars~$[i, j]$ and~$[a, b]$ are not strictly nested. We can therefore use the row belonging to bar~$[a, b]$ in the reduction of~$M_\Phi(R, C)$.
\end{proof}

	We are now ready to conclude the proof of \cref{thm:Xi_constant}.

\begin{proof}[Proof of \cref{thm:Xi_constant}]
	Let us reduce the matrix~$M_\Phi$ to matching form by admissible operations~\ref{AO1},~\ref{AO2} and~\ref{AO3}. The reduction is done on sub-matrices inductively, processing the columns from left to right, starting at the lowest non-zero sub-matrix in each column and continuing upwards. Choosing this order, the assumptions of \cref{l:induction_step} are satisfied at each step, including for the first sub-matrix in the order. As a consequence the fact that the whole matrix can be reduced  using admissible operations~\ref{AO1},~\ref{AO2} and~\ref{AO3} follows readily.
\end{proof}
As is the case for the general morphism, the matching form of~$M_\Phi$ is unique up to barcode basis changes acting among different bar generators of the same bar.

\begin{example} \label{ex:additional_assumption}
	The inequality in the assumption~$\ass$ of \cref{thm:Xi_constant} is strict. Here, we provide an example of a~$\delta$-invertible morphism~$\Phi \colon V \to W$ for which~${2\delta = \min(\Xi(V), \Xi(W))}$ that does not admit a ladder decomposition. Let~$V$ and~$W$ be persistence modules with barcodes~$\Barc{V} = \{ [0,7], [2,5] \}$ and~$\Barc{W} = \{ [0,4], [0,5] \}$, and barcode bases given by generators
	\begin{align*}
		x_{[0,7]}^V &= \{ \vec{e_1}^{(0)}, \vec{e_1}^{(1)}, \vec{e_1}^{(2)}, \vec{e_1}^{(3)}, \vec{e_1}^{(4)}, \vec{e_1}^{(5)}, \vec{e_1}^{(6)}, \vec{e_1}^{(7)} \}, \\
		x_{[2,5]}^V &= \{ \vec{e_2}^{(2)}, \vec{e_2}^{(3)}, \vec{e_2}^{(4)}, \vec{e_2}^{(5)} \}, \\
		x_{[0,4]}^W &= \{ \vec{e_1}^{(0)}, \vec{e_1}^{(1)}, \vec{e_1}^{(2)}, \vec{e_1}^{(3)}, \vec{e_1}^{(4)} \}, \\
		x_{[0,5]}^W &= \{ \vec{e_2}^{(0)}, \vec{e_2}^{(1)}, \vec{e_2}^{(2)}, \vec{e_2}^{(3)}, \vec{e_2}^{(4)}, \vec{e_1}^{(5)} \}.
	\end{align*}
	Define the~$1$-invertible morphism~$\Phi$ and its~$1$-inverse~$\Psi \colon V \to W(2)$ by giving their matrix representations
	\begin{align*}
		\begin{aligned}
			M_{\Phi} = \begin{bmatrix} \half & -\half \\[0.5em]
				\half & 1  \end{bmatrix}
		\end{aligned} \qquad \text{and} \qquad
		\begin{aligned}
			M_{\Psi} = \begin{bmatrix} \half & -\half \\[0.5em]
				\half & 1  \end{bmatrix}.
		\end{aligned}
	\end{align*}
	Notice that~$\Xi(V) = 2$ and~$\Xi(W) = \infty$, and so~$\delta=1$ satisfies~$2\delta = \min(\Xi(V), \Xi(W))$. Further, since the two bars constituting the barcode of~$V$ are nested, the only allowed operation on the columns of~$M_\Phi$ is scaling. The allowed operations on rows are scaling and adding a multiple of the bottom row to the top one, but these are not enough to reduce~$M_\Phi$ to matching form.
\end{example}

\begin{example} \label{ex:running_ex_2}
	Consider again the~$1$-invertible morphism~$\Phi^{(1)}\colon V \to W'$ from \cref{ex:running_ex_1_1}. Remember, the barcodes of the modules are 
	\begin{align*}
		\Barc{V} = \{ [0,4], [1,7], [4,4] \} \qquad \text{and} \qquad \Barc{W'} = \{ [0,4], [0,5] \},
	\end{align*}
	where the bars have been written in the lexicographical order, and the single-matrix representation of~$\Phi^{(1)}$ in the barcode bases chosen in \cref{ex:running_ex_1} and \cref{ex:running_ex_1_1} is
	\begin{align*}
		\begin{aligned}
			M_{\Phi^{(1)}} = \begin{bmatrix} 2 & 1 & 1 \\ 0 & 1 & 0  \end{bmatrix}.
		\end{aligned}
	\end{align*}
	Note that the nestedness of modules~$V$ and~$W'$ are~$3$ and~$\infty$ respectively, and since~$\delta < \half \min (3, \infty)$ holds for the shifting parameter~$\delta = 1$, morphism~$\Phi^{(1)}$ satisfies the assumptions of \cref{thm:Xi_constant}.
	Let us reduce the matrix~$M_{\Phi^{(1)}}$ to matching form while keeping track of bases changes. We begin with the entry~$M_{\Phi^{(1)}}(1, 1) = 2$, which is reduced by an admissible operation of type~\ref{AO1},
	\begin{align*}
		x^V_{[0,4]} \mapsto \half x^V_{[0,4]}.
	\end{align*}
	Moving on to the second column, leave the entry~$M_{\Phi^{(1)}}(2,2) = 1$ unchanged and eliminate~$M_{\Phi^{(1)}}(1, 2) = 1$ by subtracting the second row from the first, which is the admissible operation of type~\ref{AO3} since~$[0,4] \preceq [0,5]$. This corresponds to the basis change 
	\begin{align*}
		x^{W'}_{[0,5]} \mapsto x^{W'}_{[0,5]} + x^{W'}_{[0,4]}.
	\end{align*}
	The updated matrix~$M_{\Phi^{(1)}}$ is now
	\begin{align*}
		\begin{aligned}
			M_{\Phi^{(1)}} = \begin{bmatrix} 1 & 0 & 1 \\ 0 & 1 & 0  \end{bmatrix}.
		\end{aligned}
	\end{align*}
	To eliminate~$M_{\Phi^{(1)}}(1,3)$ and finish the reduction process, perform the basis change
	\begin{align*}
		x^V_{[4,4]} \mapsto x^V_{[4,4]}-x^V_{[0,4]},
	\end{align*}
	which corresponds to the admissible operation of type~\ref{AO2} subtracting first column from the last.
 	The ladder decomposition of~$(V, W', \Phi^{(1)})$ is therefore
 	\begin{align*}
 		\mathbf{R}^{[0,4]}_{[0,4]} \oplus \mathbf{R}^{[1,7]}_{[0,5]} \oplus \mathbf{I}^+_{[4,4]}
 	\end{align*}
 	and is obtained in barcode bases in which the bar generators are
 	\begin{align*}
 		& x^V_{[0,4]} = \{ \half \vec{e_1}^{(0)}, \half \vec{e_1}^{(1)}, \half \vec{e_1}^{(2)}, \half \vec{e_1}^{(3)}, \half \vec{e_1}^{(4)} \},\\
 		& x^V_{[1,7]} = \{ \vec{e_2}^{(1)}, \vec{e_2}^{(2)}, \vec{e_2}^{(3)}, \vec{e_2}^{(4)}, \vec{e_1}^{(5)}, \vec{e_1}^{(6)}, \vec{e_1}^{(7)} \}, \\
 		& x^V_{[4,4]} = \{ \vec{e_3}^{(4)} - \half \vec{e_1}^{(4)} \}, \\
 		& \\
 		& x^{W'}_{[0,4]} =  \{ \vec{e_1}^{(0)}, \vec{e_1}^{(1)}, \vec{e_1}^{(2)}, \vec{e_1}^{(3)}, \vec{e_1}^{(4)} \}, \\
 		& x^{W'}_{[0,5]} =  \{ \vec{e_2}^{(0)} + \vec{e_1}^{(0)}, \vec{e_2}^{(1)} + \vec{e_1}^{(1)}, \vec{e_2}^{(2)} + \vec{e_1}^{(2)}, \vec{e_2}^{(3)} + \vec{e_1}^{(3)}, \vec{e_2}^{(4)} + \vec{e_1}^{(4)}, \vec{e_1}^{(5)} \}. \qedhere
 	\end{align*}
\end{example}

\subsection{Ladder Decompositions of an Interleaving Pair} \label{subsec:ladder_decomp_of_interleaving_pair}

	Let~$(\Phi, \Psi)$ be a~$\delta$-interleaving pair between modules~$V$ and~$W$. Representing the composition~$\Phi(\delta) \circ \Psi$ or~$\Psi(\delta) \circ \Phi$ in a single matrix in a chosen barcode basis gives
	\begin{align*} 
		\text{Id}_{\geq 2\delta}(x_{J_1}, x_{J_2}) =
		\begin{cases}
			1, & \text{if } |J_1| \geq 2\delta \text{ and } x_{J_1}(2\delta) = x_{J_2},\\
			0, & \text{otherwise,}
		\end{cases}
	\end{align*}
	which follows from the definition of a~$\delta$-interleaving pair. In the rest of this section we analyse these properties further to obtain results relating ladder decompositions of~$\Phi$ and~$\Psi$.
	\begin{remark} \label{rem:matrix_multiplication}
		When working with single matrix representations of morphisms of persistence modules as defined in~\cref{prop:shape_of_Phi_single_matrix}, we cannot rely on the intuition developed for matrices of linear maps between vector spaces. An example of such discrepancy is the fact that the matrix representation of the composition~$\Phi \circ \Psi$ is in general not equal to the matrix product of~$M_\Phi$ and~$M_\Psi$. However, it can be obtained from the matrix product as follows
		\begin{align*}
			M_{\Psi \circ \Phi}(x_{J_r}, x_{J_c}) = \begin{cases}
				(M_\Psi \cdot M_\Phi)(x_{J_r}, x_{J_c}), &  \text{if } J_r \preceq J_c,\\
				0, & \text{otherwise.}
			\end{cases}
		\end{align*}
		\begin{figure}[ht]
			\centering
			\begin{tikzpicture}
	\draw (0,0) -- (3,0) node[right, below]{$J_r$};
	\draw (2,1) -- (5,1) node[right, below]{$J'$};
	\draw (4,2) -- (7,2) node[right, below]{$J_c$};
\end{tikzpicture}
			\caption{Suppose we compose a morphism mapping~$x_{J'}$ to~$x_{J_r}$ with a morphism mapping~$x_{J_c}$ to~$x_{J'}$. Since the bars~$J_c$ and~$J_r$ do not intersect, the composition of these morphisms would not map between~$x_{J_c}$ and~$x_{J_r}$, which means the matrix representation of composition is not simply the product of matrix representations of morphisms.}
			\label{fig:empty_intersection}
		\end{figure}
		To clarify, the entry~$M_{\Psi \circ \Phi}(x_{J_r}, x_{J_c})$ might be zero if~$J_c$ and~$J_r$ have an empty intersection (see~\cref{fig:empty_intersection}).	
	\end{remark}
	Let~$i$ be the index of a row (or a column) and denote by~$x_i$ the bar generator corresponding to row (or column)~$i$. Denote the corresponding bar by~$J_i = [i_1, i_2]$.
	\begin{lemma} \label{l:second_morphism_matrix_repr}
		Let~$M_\Phi$ and~$M_\Psi$ be the matrix representations of morphisms~$\Phi\colon V \to W(\delta)$ and~${\Psi \colon W \to V(\delta)}$ making an interleaving pair in the barcode bases in which~$\Phi$ decomposes as in~\cref{thm:Xi_constant}. For any non-zero entry~${M_\Phi(r, c) = 1}$, the following hold:
	\begin{enumerate}
		\item If~$|J_c| \geq 2\delta$ or~$|J_r| \geq 2\delta$ then~$M_\Psi(c, r) = 1$.\label{1}
		\item If~$|J_c| \geq 2\delta$ and there exists~$z \neq c$ for which~$M_\Psi(z, r) \neq 0$ then
		\begin{enumerate}
			\item $J_z \preceq J_c$, \label{2b}
			\item $z_2 < c_1 + 2\delta$. \label{2c}
		\end{enumerate}
		\item If~$|J_r| \geq 2\delta$ and there exists~$z \neq r$ for which~$M_\Psi(c, z) \neq 0$ then
		\begin{enumerate}
			\item $J_r \preceq J_z$, \label{3b}
			\item $z_1 > r_2 - 2\delta$. \label{3c}
		\end{enumerate}
	\end{enumerate}
	\end{lemma}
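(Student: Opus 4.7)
My plan is to exploit the interleaving identity combined with the matching form of $M_\Phi$ and the composition rule of~\cref{rem:matrix_multiplication}. The key fact is that on bar generators of length at least~$2\delta$, the compositions $\Psi \circ \Phi$ and $\Phi \circ \Psi$ act as the canonical shift by~$2\delta$, whereas the matching form of $M_\Phi$ (at most one~$1$ per row and per column) collapses the matrix products arising in these compositions to a single term. Together these rigidly constrain the entries of $M_\Psi$ in the chosen bases.

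For part~(1), suppose $|J_c| \geq 2\delta$. I would expand the $(c,c)$-entry of $M_{\Psi \circ \Phi}$ via~\cref{rem:matrix_multiplication}: since $J_c \preceq J_c$ trivially, the entry equals the raw matrix product $\sum_{r'} M_\Psi(c, r') M_\Phi(r', c)$, which collapses to $M_\Psi(c, r)\cdot M_\Phi(r,c)=M_\Psi(c,r)$ because the matching form forces $M_\Phi(r',c)=0$ for $r'\neq r$. The composition identity gives $M_{\Psi \circ \Phi}(c, c) = 1$, so $M_\Psi(c, r) = 1$. The symmetric case $|J_r| \geq 2\delta$ follows analogously by computing the $(r,r)$-entry of $M_{\Phi \circ \Psi}$ and applying the matching form to row~$r$ of $M_\Phi$.

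For parts~(2) and~(3), by part~(1) we already have $M_\Psi(c, r) = 1$. I would combine three applications of~\cref{prop:shape_of_interleaved_barcode}: $J_z \preceq J_r$ (from $M_\Psi(z, r) \neq 0$), $J_c \preceq J_r$ (from $M_\Psi(c, r) = 1$), and $J_r \preceq J_c$ (from $M_\Phi(r, c) = 1$). The last two, together with~\cref{l:existence_of_matched_bar}, pin the endpoints of $J_c$ and $J_r$ within $2\delta$ of each other. Chasing these endpoint inequalities together with $J_z \preceq J_r$ should yield the required $\preceq$-containment in~(2a)/(3a), and the strict $2\delta$ bounds~(2b)/(3b) then follow by combining the endpoint-closeness of $J_c$ and $J_r$ with the distinctness $z\neq c$ (respectively $z\neq r$) to exclude the boundary cases. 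Part~(3) is established symmetrically by replacing $\Psi \circ \Phi$ with $\Phi \circ \Psi$ and exchanging the roles of rows and columns.

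The main obstacle I anticipate is consistent bookkeeping of the implicit shifts between~$V$,~$V(\delta)$,~$V(2\delta)$ (and analogously for~$W$) when comparing bar endpoints via the non-transitive relation~$\preceq$: the chained containments must be translated into concrete endpoint inequalities under the correct shift conventions. A secondary subtlety is extracting the strict~$2\delta$ bounds in~(2b) and~(3b), which seems to use the nestedness assumption~$\ass$ implicitly, via the fact that the endpoint-closeness forced by~\cref{l:existence_of_matched_bar} is sharp precisely because nested bars cannot come within~$2\delta$ of each other.
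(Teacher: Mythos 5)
Part~(1) of your proposal is the paper's argument exactly. The gap is in parts~(2) and~(3). You propose to derive both conclusions by chaining the $\preceq$-relations from \cref{prop:shape_of_interleaved_barcode} applied to $M_\Psi(z,r)\neq 0$, $M_\Psi(c,r)=1$ and $M_\Phi(r,c)=1$, combined with the endpoint-closeness of $J_c$ and $J_r$. But $\preceq$ is not transitive, and after translating through the shifts ($J_z(\delta)\preceq J_r$, $J_c(\delta)\preceq J_r$, $J_r(\delta)\preceq J_c$) the inequalities you actually get are $z_1-2\delta\leq c_1$, $c_1\leq z_2$ and $z_2-2\delta\leq c_2$. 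This does not give $J_z\preceq J_c$, and it certainly does not give $z_2 < c_1+2\delta$: for instance a bar $J_z$ with $z_2=c_1+3\delta$ and $z_1$ small is compatible with all of these while satisfying neither conclusion. The hypothesis $z\neq c$ plays no role in a pure inequality chase, so your proposed mechanism for extracting the strict bound cannot work.

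The missing idea is the \emph{off-diagonal} composition entry. Since $z\neq c$ and $|J_c|\geq 2\delta$, the interleaving identity gives $M_{\Psi\circ\Phi}(z,c)=0$, while the raw matrix product $(M_\Psi\cdot M_\Phi)(z,c)$ collapses, because $M_\Phi$ is in matching form in column $c$, to $M_\Psi(z,r)\cdot M_\Phi(r,c)\neq 0$. By \cref{rem:matrix_multiplication} this discrepancy forces $J_z(2\delta)\not\preceq J_c$; combined with $z_1-2\delta\leq c_1$ and $z_2-2\delta\leq c_2$, the only surviving failure mode is $z_2 < c_1+2\delta$, which is precisely the bound in part~(2). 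Note this step uses no nestedness at all. The containment $J_z\preceq J_c$ then requires a case analysis on how it could fail: $z_2>c_2$ is ruled out by $z_2<c_1+2\delta\leq c_2$; $z_1>c_1$ produces a strictly nested pair $J_z\subset J_c$ with $|z_1-c_1|<2\delta$, contradicting $\delta<\tfrac12\Xi(V)$; and $z_2<c_1$ is ruled out via \cref{r:matches_of_long_bars}. So the nestedness assumption enters in the containment, not the bound, and the distinctness $z\neq c$ is used to force a vanishing composition entry, not to ``exclude boundary cases''. Part~(3) is symmetric, reading off $M_{\Phi\circ\Psi}(r,z)=0$ against $(M_\Phi\cdot M_\Psi)(r,z)$. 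You need to add the off-diagonal computation; without it the argument does not close.
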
	
	\begin{proof}
		Suppose~$|J_c| \geq 2\delta$ and observe the matrix representation of the composition~$M_{\Psi \circ \Phi} = \text{Id}_{\geq 2\delta}$. Since the entry~$M_{\Psi \circ \Phi}(c, c)=1$ is non-zero, the~\cref{rem:matrix_multiplication} suggests that~$M_{\Psi \circ \Phi}(c, c) = (M_\Psi \cdot M_\Phi)(c, c).$ From the matrix equation
		\begin{equation*}
			M_\Psi \cdot M_\Phi \ \ =\ \  
			\begin{pNiceArray}{ccc:c:c}[first-row,first-col]
				&   &   &   & r &   \\
				&   &   &   &   &   \\
				\hdottedline
				c &   & * &   & a & * \\
				\hdottedline
				&   &   &   &   &   \\
				\hdottedline
				z &   & * &   & b & * \\
				\hdottedline
				&   &   &   &   &   
			\end{pNiceArray} \cdot 
			\begin{pNiceArray}{c:c:ccc}[first-row,last-col]
				& c &   &   &   &   \\
				&   &   &   &   &   \\
				& 0 &   &   &   &   \\
				&   &   &   &   &   \\
				\hdottedline
				0 & 1 &   & 0 &   & r \\
				\hdottedline
				& 0 &   &   &   &   
			\end{pNiceArray}
		\end{equation*}
	we can deduce that~$1 =(M_\Psi \cdot M_\Phi)(c, c) = 1 \cdot a = M_\Psi(c, r)$. With a similar procedure we can obtain the same result for when~$|J_r|\geq 2\delta$ which proves the first property.
	
	Continue with the assumption that~$|J_c| \geq 2\delta$ and there is a~$z$ such that~$M_\Psi(z, r) \neq 0$. Now the entry~$M_{\Psi \circ \Phi}(z, c)$ is zero, and since~$(M_\Psi \cdot M_\Phi)(z, c) = M_\Psi(z, r)$ is not zero, it must be that the bars~$J_z(2\delta)$ and~$J_c$ do not intersect. In other words,~$z_2 < c_1 + 2\delta$ (property~\ref{2c}). Now assume~$J_z \not\preceq J_c$, which means we are in one of the following cases:
	\begin{itemize}
		\item~$z_2 > c_2$,
		\item~$z_1 > c_1$,
		\item~$z_2 < c_1.$
	\end{itemize}
	The first case cannot happen since~$c_2 < z_2 < c_1 + 2\delta$ and~$|J_c|\geq 2\delta$ cannot hold simultaneously. In the second case, the chain of inequalities~$c_1 < z_1 \leq z_2 < c_1 + 2\delta \leq c_2$ gives us~$J_z \subset J_c$ and~$|z_1 - c_1| < 2 \delta$, which is a contradiction with the assumption that~$\ass$. Lastly, assume~$z_2 < c_1$. Since~$|J_c|\geq 2\delta$, the bar~$J_r$ must be the one from \cref{r:matches_of_long_bars}. In particular,
	\begin{align*}
		|c_i - r_i| \leq \delta \text{ for } i=1, 2.
	\end{align*}
	Since~$c_1 -\delta \leq r_1$, a bar~$J_z$ with the bar generator in~$\supp \Psi(x_r)$ must satisfy~$z_2 \geq r_1 + \delta \geq c_1$, which cannot be satisfied in the third case. We arrive to contradictions in all three cases, hence~$J_z \preceq J_c$ holds.
	
	Properties~\ref{3b} and~\ref{3c} can be obtained similarly by observing the entries of~$M_{\Phi \circ \Psi} = \text{Id}_{\geq 2\delta}$ and comparing them to
	\begin{equation*}
		M_\Phi \cdot M_\Psi \ \ =\ \ 
		\begin{pNiceArray}{c:c:ccc}[first-row,first-col]
			&   & c &   &   &   \\
			&   &   &   &   &   \\
			&   & 0 &   &   &   \\
			&   &   &   &   &   \\
			\hdottedline
			r & 0 & 1 &   & 0 &  \\
			\hdottedline
			&   & 0 &   &   &   
		\end{pNiceArray} \cdot
		\begin{pNiceArray}{c:c:c:c:c}[first-row,last-col]
			& z &   & r &   &   \\
			& * &   & * &   &   \\
			\hdottedline
			& a &   & b &   & c \\
			\hdottedline
			&   &   &   &   &   \\
			& * &   & * &   &  \\
			&   &   &   &   &   
		\end{pNiceArray}. \qedhere
	\end{equation*} 
\end{proof}

\begin{corollary}\label{cor:entries_in_matching_form}
	Let~$(\Phi, \Psi)$ be a~$\delta$-interleaving pair between~$V$ and~$W$ and let~$\ass$. Given a matching form of~$M_\Phi$, we can obtain such a matching form of~$M_\Psi$ that
	\begin{align*}
		M_\Phi(r, c) = M_\Psi(c, r)
	\end{align*}
	whenever~$|J_c| \geq 2\delta$ and~$|J_r| \geq 2\delta$.
\end{corollary}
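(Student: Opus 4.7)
The plan is to apply the reduction procedure from the proof of \cref{thm:Xi_constant} to $M_\Psi$ in the bases in which $M_\Phi$ is given in matching form, and to argue that every long-bar entry $M_\Psi(c,r) = 1$ that arises by \cref{l:second_morphism_matrix_repr}\,\eqref{1} survives the reduction. In these bases the lemma already yields $M_\Psi(c,r) = 1$ whenever $M_\Phi(r,c) = 1$ and at least one of $|J_c|, |J_r| \geq 2\delta$, so the real work is to show that, when \emph{both} bars are long, such entries persist through the reduction, and that the resulting matching form of $M_\Psi$ carries no spurious long-bar $1$-entries either.

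For the preservation step I will fix a pair $(c, r)$ with $M_\Psi(c, r) = 1$ and both $|J_c|, |J_r| \geq 2\delta$, and locate every other non-zero entry in row $c$ and column $r$ of $M_\Psi$ using the remaining parts of \cref{l:second_morphism_matrix_repr}. Parts~\eqref{2b}--\eqref{2c} together with $|J_c| \geq 2\delta$ force every other non-zero $M_\Psi(z, r)$ to have $J_z < J_c$ strictly in the lexicographic order, while parts~\eqref{3b}--\eqref{3c} force every other non-zero $M_\Psi(c, z)$ to have $J_z > J_r$ strictly. Under the column-by-column left-to-right, bottom-up reduction order on sub-matrices, these ``competitor'' entries all lie in sub-matrices processed strictly \emph{after} $X_{J_c}^{J_r}$. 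Hence, when $X_{J_c}^{J_r}$ is reduced, no $1$ of the already-reduced region occupies row $c$ or column $r$ outside the pivot, and the pivot falls into category~$1$ of the case analysis in the proof of \cref{l:induction_step}; it can therefore be retained using only \ref{AO1}-operations inside the sub-matrix. I will then check that any later \ref{AO2}- or \ref{AO3}-operation that touches row $c$ or column $r$ acts via a row or column that vanishes at the pivot position---again by the same strict lex-order separation---so the pivot remains undisturbed.

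For the converse direction I will appeal to \cref{l:second_morphism_matrix_repr}\,\eqref{1} applied to the interleaving pair in which the roles of $\Phi$ and $\Psi$ are exchanged, as justified by \cref{r:interleaving_vs_invertible}. Starting from the matching form of $M_\Psi$ just produced, this symmetric statement yields $M_\Psi(c, r) = 1 \Rightarrow M_\Phi(r, c) = 1$ for both bars long, and combined with the forward direction it gives a transpose bijection between the long-bar $1$-entries of the two matching forms. Since the long-bar portion of any matching form is an invariant of the underlying ladder decomposition, the identification made in the new bases descends to the original matching form of $M_\Phi$ at the bar level, and the within-bar freedom in choosing representatives allows the entry-wise alignment required by the corollary. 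The hardest part will be this bookkeeping across two different base choices; the strict lex-order separation delivered by \cref{l:second_morphism_matrix_repr}\,\eqref{2b}--\eqref{2c} and \eqref{3b}--\eqref{3c} is what powers both the preservation and the symmetry argument.
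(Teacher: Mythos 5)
Your plan mirrors the paper's: write $M_\Psi$ in the pair of barcode bases in which $M_\Phi$ is in matching form, run the column-major, bottom-up reduction of \cref{thm:Xi_constant}, and argue the pivot $M_\Psi(c,r)=1$ persists. One genuine improvement you make is pairing properties~\eqref{2b} and \eqref{3b} with the numerical bounds~\eqref{2c} and \eqref{3c}: with $|J_c|,|J_r|\geq 2\delta$, these force a \emph{strict} lexicographic separation $J_z < J_c$ and $J_z > J_r$, which excludes competitor entries inside the same block. The paper cites only~\eqref{2b} and~\eqref{3b}, which alone leave open the possibility $J_z = J_c$ or $J_z = J_r$ with a different generator; your strictness argument closes that implicitly and is exactly what makes the pivot a category-$1$ entry in the sense of \cref{l:induction_step} when its turn comes.

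Where you deviate is in adding a ``converse'' step to rule out spurious long-bar $1$-entries in the matching form of $M_\Psi$. That worry is legitimate, since the paper's argument only yields the implication $M_\Phi(r,c)=1 \Rightarrow M_\Psi(c,r)=1$, while the corollary asserts an equality. But the route you sketch (applying the lemma with $\Phi,\Psi$ exchanged, then transporting the resulting identification across two distinct choices of barcode bases and invoking within-bar freedom) is considerably heavier than necessary, and its base-transfer step is the shakiest part of your plan. The cleaner way to finish is a counting argument: the forward direction gives an injection from the $1$-entries of $M_\Phi$ in each long-bar block $X_{J_r}^{J_c}$ to those of $M_\Psi$ in $X_{J_c}^{J_r}$; applying the same forward direction with $\Phi,\Psi$ exchanged (justified by \cref{r:interleaving_vs_invertible}) gives the reverse inequality on the multiplicities $r^{J_c}_{J_r}$ because these are invariants of the ladder decomposition, and hence equality. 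That forces the forward injection on each long-bar block to be a bijection, which yields $M_\Phi(r,c)=0\Rightarrow M_\Psi(c,r)=0$ without ever comparing the two base choices. With that replacement, your proof and the paper's are the same in substance, yours being more explicit about strictness and about the $0$-case.
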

\begin{proof}
	Write~$M_\Psi$ in the pair of barcode bases in which~$M_\Phi$ is in matching form and perform the reduction process using admissible operations~\ref{AO1},~\ref{AO2} and~\ref{AO1}. Since~$|J_c| \geq 2\delta$ and~$|J_r| \geq 2\delta$, the combined properties~\ref{2b} and~\ref{3b} from \cref{l:second_morphism_matrix_repr} guarantee that the row~$c$ to the left of~$M_\Psi(c, r)$ and the column~$r$ below~$M_\Psi(c, r)$ are zero at the start of the reduction. This means that the steps of the reduction before encountering~$M_\Psi(c, r)$ will not modify it. Thus it will stay unchanged until the end of the reduction.
\end{proof}

\correspondence
\begin{proof}
	For each appearance of~$\mathbf{R}_{J_W(\delta)}^{J_V}$ in the ladder decomposition of~$\Phi$ we have a unique entry~$M_{\Phi}(R, C) = 1$ in the reduced matrix in the matching form, where~$R$ is a row index belonging to the bar~$J_W(\delta)$ and~$C$ a column index belonging to the bar~$J_V$. By \cref{cor:entries_in_matching_form}, the entry~$M_{\Psi}(C, R)$ in the matching form of~$\Psi$ also equals~$1$. It corresponds to an appearance of~$\mathbf{R}_{J_V(\delta)}^{J_W}$ in the ladder decomposition of~$\Psi$. 
\end{proof}

\begin{example} \label{ex:running_ex_3}
	Continue~\cref{ex:running_ex_1,ex:running_ex_2} by considering the morphism~$\Psi$, which forms an interleaving with~$\Phi$. First, remember the ladder decomposition
	\begin{gather*}
		\mathbf{R}^{[0,4]}_{[0,4]} \oplus \mathbf{R}^{[1,7]}_{[0,5]} \oplus \mathbf{I}^+_{[4,4]} \qedhere
	\end{gather*}
	that we obtained for the~$1$-invertible morphism~$\Psi^{(1)}$ in \cref{ex:running_ex_2}. The ladder decomposition of~$\Phi$ is then
	\begin{gather*}
		\mathbf{R}^{[0,4]}_{[1,5]} \oplus \mathbf{R}^{[1,7]}_{[1,6]} \oplus \mathbf{I}^+_{[4,4]}. \qedhere
	\end{gather*}
	To obtain a ladder decomposition for~$\Psi$ as well, begin by writing the matrices~$\Psi_i$ in the (shifted equivalents of) barcode bases obtained in~\cref{ex:running_ex_2}:
	\begin{align*}
		&\begin{aligned}
			\Psi_{0,7} = 0,
		\end{aligned}
		&&\begin{aligned}
			\Psi_{1,2} = \begin{bmatrix} 1 & 0 \\ 0 & 1 \end{bmatrix},
		\end{aligned}
		&&\begin{aligned}
			\Psi_{3} = \begin{bmatrix} 1 & 0 \\ 0 & 1 \\ 0 & 0 \end{bmatrix},
		\end{aligned}
		&&\begin{aligned}
			\Psi_{4,5} = \begin{bmatrix} 0 & 1 \end{bmatrix},
		\end{aligned}
		&&\begin{aligned}
			\Phi_6 = \begin{bmatrix} 1 \end{bmatrix}.
		\end{aligned}
	\end{align*}
	Since the single matrix representation of~$\Psi$
	\begin{align}
		M_\Psi = \begin{bmatrix} 1 & 0 \\ 0 & 1 \\ 0 & 0 \end{bmatrix}
	\end{align}
	is already in matching form, further reduction is not necessary. The ladder decomposition of~$\Psi$ is
	\begin{gather*}
		\mathbf{R}^{[1,5]}_{[0,4]} \oplus \mathbf{R}^{[1,6]}_{[1,7]} \oplus \mathbf{I}^-_{[4,4]}. \qedhere
	\end{gather*}
\end{example}

\begin{remark}
	The ladder decompositions of~$\Phi$ and~$\Psi$ are in most cases obtained in different pairs of barcode bases~$(\mathcal{B}_V^\Phi, \mathcal{B}_W^\Phi)$ and~$(\mathcal{B}_V^\Psi, \mathcal{B}_W^\Psi)$ respectfully, which is not illustrated in~\cref{ex:running_ex_3}. This happens whenever~$M_\Psi$ written in bases~$(\mathcal{B}_V^\Phi, \mathcal{B}_W^\Phi)$ contains a non-zero entry~$M_\Psi(R, C')$ as
	\begin{equation*}
		\begin{pNiceArray}{cc:c:cc:c:cc}[first-row,last-col]
			&   & C &   &   & C' &   &   &   \\
			&   &   &   &   &    &   &   &   \\
			\hdottedline
			&   & 1 &   &   & * &   &   & R \\
			\hdottedline
			&   &   &   &   &   &   &   &   \\
			\hdottedline
			&   &   &   &   & 1 &   &   & R' \\
			\hdottedline
			&   &   &   &   &   &   &   &   
		\end{pNiceArray}
	\end{equation*}
	or when a similar entry can be found in~$M_\Phi$ written in bases~$(\mathcal{B}_V^\Psi, \mathcal{B}_W^\Psi)$.
\end{remark}

	\section{\texorpdfstring{$q$}\ -Coarse Ladder Decomposition} \label{sec:quasi-barcode_form}

As deduced in~\cref{subsec:decomp_of_interleavings}, the range of parameters~$\delta$ for which~$\delta$-invertible morphisms between persistence modules~$V$ and~$W$ will decompose as in~\cref{thm:Xi_constant} is controlled by the nestedness of~$V$ and~$W$. More precisely, small nestedness imposes a harsher limit on parameter~$\delta$ in \cref{thm:Xi_constant}. In this section we focus on persistence modules with small nestedness which is achieved in (relatively) short bars, as illustrated in~\cref{fig:long_bar_with_short_nested_bars}. We explore to which extent short nested bars can be ignored and how this weakens our results from~\cref{subsec:decomp_of_interleavings,subsec:ladder_decomp_of_interleaving_pair}.
\begin{figure}[ht]
	\centering
	\begin{tikzpicture}
	\draw (0,-0.75) -- (3,-0.75);
	\draw (-0.4,-1) -- (0.4,-1);
	\draw (-0.3,-1.25) -- (0.2,-1.25);
\end{tikzpicture}
	\caption{Example of a barcode of a persistence module with small nestedness. The minimum from the definition of nestedness is achieved in the lower two bars, which are significantly shorter than the top-most bar and can in some cases be attributed to noise.}
	\label{fig:long_bar_with_short_nested_bars}
\end{figure}
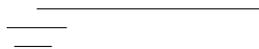
 \begin{definition}
	Let~$V$ be a persistence module and~$q \in \R_{\geq 0}$. An isomorphism~$V \cong V_{\geq q} \oplus V_{< q}$, where the pair~$(V_{\geq q}, V_{< q})$ of persistence modules satisfies
	\begin{itemize}
		\item~any bar $J \in \Barc{V_{\geq q}}$ is of length at least~$q$,
		\item~any bar $J \in \Barc{V_{< q}}$ is of length smaller than~$q$,
	\end{itemize}
	is called a~\emph{$q$-splitting} of~$V$.
\end{definition}
A~$q$-splitting induces epimorphisms~$pr_{\geq q}^V, pr_{< q}^V$ and monomorphisms~$i_{\geq q}^V, i_{< q}^V$, which map as follows:
\begin{center}
	\begin{tikzcd}
		V_{\geq q} \arrow[rr, "i_{\geq q}^V", hook, shift left] & & V \arrow[rr, "pr_{< q}^V", two heads, shift left] \arrow[ll, "pr_{\geq q}^V", two heads, shift left] & & V_{< q} \arrow[ll, "i_{< q}^V", hook, shift left].
	\end{tikzcd}
\end{center}
Whenever we wish to discard shorter bars, let us say shorter than~$q$, we will project the module~$V$ onto the part~$V_{\geq q}$ of its splitting. Refer to~$V_{\geq q}$ as the~\emph{$q$-coarse part} of~$V$. A~$q$-splitting of a persistence module always exists and is especially convenient since it allows us to define barcode bases~$\mathcal{B}_{V_{\geq q}}$ and~$\mathcal{B}_{V_{< q}}$ for~$V_{\geq q}$ and~$V_{< q}$ separately. The barcode basis~$\mathcal{B}_V$ these induce on~$V$ is then defined simply as
\begin{align*}
	\mathcal{B}_V = i_\q^V(\mathcal{B}_{V_{\geq q}}) \cup i_{< q}^V(\mathcal{B}_{V_{< q}}).
\end{align*}
%

Denote by~$[ \delta ]_V$ the collection of inner morphisms~$\{v_{i, i+\delta}\}_i$ of module~$V$. For brevity sake we often omit the subscript denoting the module and write only~$[\delta]$.
\begin{lemma} \label{l:q-coarse_properties}
	For any parameter~$q$ and a persistence module~$V$ the following properties hold:
	\begin{enumerate}
		\item $ \displaystyle \Xi(V_{\geq q}) = \min_{\substack{[c, d] \subset [a, b] \in \Barc{M} \\ d-c \geq q}} \min \{ |a-c|, |b-d| \}.$ \label{l:prop1}
		\item $d_B(\Barc{V}, \Barc{V_{\geq q}}) < \qhalf$. \label{l:prop2}
		\item $V$ and~$V_{\geq q}$ are~$\qhalf$-interleaved with interleaving morphisms \label{l:prop3}
		\begin{align*}
			\varphi_\q^V\colon V \to V_\q (\qhalf) \qquad & \qquad \tilde{\varphi}_{\geq q}^V\colon V_\q \to V(\qhalf) \\
			\varphi_\q^V = [\qhalf] \circ pr_\q^V \qquad  & \qquad \tilde{\varphi}_{\geq q}^V = i_\q^V(\qhalf) \circ [\qhalf].
		\end{align*}
		\item Epimorphism~$pr_{\geq q}^V$ and monomorphism~$i_{\geq q}^V$ are~$\qhalf$-invertible. \label{l:prop4}
	\end{enumerate}
\end{lemma}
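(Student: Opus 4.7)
The plan is to establish each of the four properties by unpacking the direct-sum decomposition $V \cong V_{\geq q} \oplus V_{<q}$ given by the $q$-splitting, and verifying each claim by a short bookkeeping argument.

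For property~\ref{l:prop1}, the key point is that $\Barc{V_{\geq q}}$ equals the sub-multiset of bars in $\Barc{V}$ of length at least $q$. Hence a strictly nested pair $[c,d] \subset [a,b]$ in $\Barc{V_{\geq q}}$ is precisely a strictly nested pair in $\Barc{V}$ with both endpoints of length $\geq q$. But strict nesting forces $b-a > d-c$, so the extra condition ``$b-a \geq q$'' is redundant once $d-c \geq q$ is imposed. The claimed formula then follows directly from the definition of~$\Xi$. Property~\ref{l:prop2} is similar in spirit: one simply exhibits the partial matching $\chi\colon \Barc{V} \todot \Barc{V_{\geq q}}$ that pairs every bar of length $\geq q$ with its copy in $\Barc{V_{\geq q}}$ (at cost $0$) and leaves the bars of length $<q$ unmatched. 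Each unmatched bar $[a,b]$ contributes cost $(b-a)/2 < q/2$, so the overall bottleneck cost of~$\chi$ is strictly below $q/2$.

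For property~\ref{l:prop3} I would compute the two compositions directly from the given formulas. Writing everything in the splitting, one has the crucial identity
\begin{align*}
    v_{t,t+q} \;=\; i_{\geq q}^V \circ (v_{\geq q})_{t,t+q} \circ pr_{\geq q}^V,
\end{align*}
which holds because every bar of $V_{<q}$ has length strictly less than $q$, forcing the $q$-shift inner morphism to vanish on the $V_{<q}$ summand. Using this, $\tilde{\varphi}_{\geq q}^V(\qhalf) \circ \varphi_{\geq q}^V$ collapses to $i_{\geq q}^V \circ [q]_{V_{\geq q}} \circ pr_{\geq q}^V = [q]_V$, and $\varphi_{\geq q}^V(\qhalf) \circ \tilde{\varphi}_{\geq q}^V$ collapses to $[\qhalf]_{V_{\geq q}} \circ (pr_{\geq q}^V \circ i_{\geq q}^V) \circ [\qhalf]_{V_{\geq q}} = [q]_{V_{\geq q}}$ since $pr_{\geq q}^V \circ i_{\geq q}^V = \mathrm{id}_{V_{\geq q}}$.

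Finally, for property~\ref{l:prop4} I would exhibit explicit $\qhalf$-inverses rather than route through Remark~\ref{r:interleaving_vs_invertible}, since the direct approach is cleaner. For $pr_{\geq q}^V \colon V \to V_{\geq q}$, set $\Psi_t = i_{\geq q, t+q}^V \circ (v_{\geq q})_{t, t+q}$; the two required diagrams amount to $\Psi_t \circ pr_{\geq q, t}^V = v_{t, t+q}$ (the splitting identity above) and $pr_{\geq q, t+q}^V \circ \Psi_t = (v_{\geq q})_{t,t+q}$ (using $pr \circ i = \mathrm{id}$). For $i_{\geq q}^V$ one analogously uses $\Psi'_t = pr_{\geq q, t+q}^V \circ v_{t, t+q}$. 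No step here is substantial; the only place to slip up is in tracking the shift indices and consistently invoking the splitting identity, which is also the only ``load-bearing'' fact underlying the whole lemma.
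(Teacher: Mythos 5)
Your proof is correct and follows essentially the same route as the paper's: the key observations (the $q$-splitting identity $[q]_V = i_{\geq q}^V(q)\circ[q]_{V_{\geq q}}\circ pr_{\geq q}^V$ deriving from $V_{<q}\subseteq\ker[q]$, the explicit bottleneck matching, and writing out the $\qhalf$-inverses) all coincide with what appears in the paper, modulo the paper bundling the Property~\ref{l:prop4} inverses as a ``restatement'' of Property~\ref{l:prop3}. The minor embellishments you add (explaining why the constraint $b-a\geq q$ is redundant in Property~\ref{l:prop1}, and spelling out the second composition in Property~\ref{l:prop3}) are sound but not a different approach.
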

\begin{proof}
	Property~\ref{l:prop1} is rather obvious since the barcode~$\Barc{V_\q}$ can be obtained from~$\Barc{V}$ by discarding bars shorter than~$q$. To prove Property~\ref{l:prop2} observe that the the bottleneck distance will be achieved in a matching where all bars in~$\Barc{V}$ of length at least~$q$ are matched with their copies in~$\Barc{V_\q}$ and the rest are left unmatched. The first contribute nothing to the cost, while the second are of length~$< q$, and not matching them contributes less than~$\qhalf$ to the cost. As a consequence of Property~\ref{l:prop2} and the algebraic stability theorem~\cite{bauer2013induced}, modules~$V$ and~$V_\q$ are~$\qhalf$-interleaved. To prove that a possible choice for~$\qhalf$-interleaving morphisms are~$\varphi_\q^V$ and~$\tilde{\varphi}_{\geq q}^V$, see that
	\begin{align*}
		\tilde{\varphi}_{\geq q}^V(\qhalf) \circ \varphi_\q^V = i_\q^V(q) \circ [q] \circ pr_\q^V= [q] \circ i_\q^V \circ pr_\q^V,
	\end{align*}
	where we use the morphism property~$i_\q^V(q) \circ [q] = [q] \circ i_\q^V$ in the last step. Further, since~$V_{< q} \subseteq \ker ([q])$ we obtain the desired
	\begin{align*}
		\tilde{\varphi}_{\geq q}^V(\qhalf) \circ \varphi_\q^V = [q].
	\end{align*}
	The proof that~$\varphi_\q^V(\qhalf) \circ \tilde{\varphi}_{\geq q}^V = [q]$ also follows from similar considerations. These equalities together finish the proof of Property~\ref{l:prop3}. To prove that~$pr_{\geq q}^V$ is~$\qhalf$-invertible, we simply need to restate Property~\ref{l:prop3} as
	\begin{align*}
		& (i_\q^V(q) \circ [q]) \circ pr_\q^V = [q]\\
		& pr_\q^V(q) \circ (i_\q^V(q) \circ [q]) = pr_\q^V(q) \circ ([g] \circ i_\q^V) = [q]
	\end{align*}
	and see that~$i_\q^V(q) \circ [q]$ is its~$\qhalf$-inverse. Similarly,~$i_\q^V$ is~$\qhalf$-invertible with~$pr_\q^V(q) \circ [q]$ as its~$\qhalf$-inverse.
\end{proof}

\subsection{\texorpdfstring{$q$}\ -Coarse Ladder Decomposition of a~\texorpdfstring{$\delta$}\ -invertible Morphism} \label{subsec:qcoarse_invertible}
In this section we state the weaker versions of our results from \cref{sec:morphisms_and_induced_matchings} that hold for~$\delta$-invertible morphisms.

\begin{lemma} \label{c:induced_d'_invertible_morphism}
	Let~$\Phi \colon V \to W$ be a~$\delta$-invertible morphism with~$\delta$-inverse~$\Psi$. They induce:
	\begin{enumerate}
		\item a~$(\delta + \qhalf)$-invertible morphism~$pr_\q^W \circ \Phi \colon V \to W_\q$ with a~$(\delta + \qhalf)$-inverse~$\Psi(q) \circ i_\q^W(q) \circ [q]$,
		\item a~$(\delta + \qhalf)$-invertible morphism~$\Phi \circ i_\q^V \colon V_\q \to W$ with a~$(\delta + \qhalf)$-inverse~$pr_\q^V(q+2\delta) \circ [q](2\delta) \circ \Psi$,
		\item a~$(\delta + \qhalf)$-invertible morphism~$\tilde{\Phi}\colon V_\q \to W_\q$ with a~$(\delta + \qhalf)$-inverse~$\tilde{\Psi}$ where
		\begin{align*}
			& \tilde{\Phi} = pr_\q^W \circ \Phi \circ i_\q^V, \\
			&\tilde{\Psi} = pr_\q^V(2\delta+q) \circ \Psi(q) \circ (i_\q^W(q) \circ [q]).
		\end{align*}
	\end{enumerate}
	\begin{center}
	\begin{tikzcd}[scale=0.9]
		V \arrow[r, "\Phi"] \arrow[rd, "pr_\q^W \circ \Phi"', dashed] & W \arrow[d, "pr_\q^W"] &&
		V \arrow[r, "\Phi"] & W &&
		V \arrow[r, "\Phi"] & W \arrow[d, "pr_\q^W", two heads] \\
		
		& W_\q &&
		V_\q \arrow[u, hook, "i_\q^V"] \arrow[ru, "\Phi \circ i_\q^V"', dashed]& &&
		V_\q \arrow[u, hook, "i^V_\q"] \arrow[r, dashed, "\tilde{\Phi}"] & W_\q
	\end{tikzcd}
\end{center}
\end{lemma}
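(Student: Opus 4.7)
The plan is to verify each claim by direct substitution into the two composition identities required by the definition of a $(\delta + \qhalf)$-invertible morphism, collapsing each expression with three recurring ingredients: the $\delta$-invertibility relations $\Psi \circ \Phi = [2\delta]_V$ and $\Phi(2\delta) \circ \Psi = [2\delta]_W$; the naturality of $\Phi$, $\Psi$, $i_\q$ and $pr_\q$ (i.e.\ they commute with inner morphisms up to the appropriate shift); and the identity $i_\q^V(q) \circ [q]_{V_\q} \circ pr_\q^V = [q]_V$, which is Property~\ref{l:prop3} of \cref{l:q-coarse_properties} rewritten via $\varphi_\q^V$ and $\tilde{\varphi}_\q^V$, together with $pr_\q \circ i_\q = \mathrm{id}$.

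For part~(1), the composition $(\Psi(q) \circ i_\q^W(q) \circ [q]) \circ (pr_\q^W \circ \Phi)$ contains the middle piece $i_\q^W(q) \circ [q]_{W_\q} \circ pr_\q^W = [q]_W$, after which two applications of naturality move $[q]$ across $\Phi$ and $\Psi$ to produce $[q]_V(2\delta) \circ \Psi \circ \Phi = [q]_V(2\delta) \circ [2\delta]_V = [2\delta + q]_V$. The reverse composition $(pr_\q^W \circ \Phi)(2\delta + q) \circ (\Psi(q) \circ i_\q^W(q) \circ [q])$ is manipulated in the same style: the interleaving relation $\Phi(2\delta) \circ \Psi = [2\delta]_W$ collapses the $\Phi$-$\Psi$ part, and the outer $pr_\q^W \circ i_\q^W = \mathrm{id}$ (composed with the shift of $[q]_W$ and $[2\delta]_W$) gives $[2\delta+q]_{W_\q}$.

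Parts~(2) and~(3) are structurally analogous. In~(2) the $[q]$-shift is placed on the $V$-side rather than the $W_\q$-side, so the central cancellation is effected by $pr_\q^V(s) \circ i_\q^V(s) = \mathrm{id}$ at the appropriate shift $s$, together with naturality of $\Phi$ used to migrate $[q]$ across. For~(3), the cleanest approach is a direct chase combining the arguments of~(1) and~(2): after substituting $\tilde{\Psi}$ and $\tilde{\Phi}$, one uses the same identity $i_\q^W(q) \circ [q]_{W_\q} \circ pr_\q^W = [q]_W$ on the $W$-side and, on the $V$-side, the identity $(i_\q^V \circ pr_\q^V)(s) \circ [q]_V(s') = [q]_V(s')$ (which captures that $V_{<q}$ is annihilated by a $q$-shift, so the projection onto the $V_\q$-summand acts as the identity after any such shift).

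The entire argument is a formal manipulation, so the main obstacle is simply the bookkeeping of shift parameters — keeping track of which morphism lives at which shift, and ensuring that $[s]$ is applied on the correct side of each intertwiner. Once the four identities above are invoked in the right order, every composition collapses to the required $[2\delta+q]$.
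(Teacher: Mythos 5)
Your proposal is correct and follows essentially the same route as the paper; the only difference is stylistic. For parts (1) and (2) the paper compresses the argument by invoking the general principle that composing a $\delta$-invertible morphism with a $\qhalf$-invertible one (which is what $pr_\q$ and $i_\q$ are, by \cref{l:q-coarse_properties}) yields a $(\delta+\qhalf)$-invertible morphism, whereas you unroll this into the explicit cancellation using $i_\q^W(q)\circ[q]_{W_\q}\circ pr_\q^W=[q]_W$, naturality, and $pr_\q\circ i_\q=\mathrm{id}$; for part (3) the paper likewise performs a direct diagram chase built from parts (1) and (2), matching your plan. One small bookkeeping note: in the first composition of part (1), after the middle piece collapses to $[q]_W$, a single naturality move of $[q]$ past $\Psi$ already gives $[q]_V(2\delta)\circ\Psi\circ\Phi$ — you do not need a second move past $\Phi$ — but this does not affect the conclusion.
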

\begin{proof}
	The first two statements are a simple consequence of the fact that a composition of a~$\delta$-invertible morphism with a~$\qhalf$-invertible morphism is a~$(\delta + \qhalf)$-invertible morphism. To show the third, draw the composition~$\tilde{\Psi} \circ \tilde{\Phi}$ in a diagram as
	\begin{center}
		\begin{tikzcd}
			V \arrow[r, "\Phi"]  \arrow[ddr, dashed] & W \arrow[dd, "pr_\q^W"', two heads] & & W(q) \arrow[r, "\Psi(q)"] & V(2\delta+ q) \arrow[dd, "{pr_\q^V(2\delta + \qhalf)}", two heads] \\
			&&&&\\
			V_\q \arrow[uu, "{i_\q^V}", hook] \arrow[r, "\tilde{\Phi}"'] & W_\q  \arrow[uurrr, dashed] \arrow[rrr, "\tilde{\Psi}"'] \arrow[rruu, "i_\q^W(q) \circ {[q]}" description, hook] & & & V_\q (2\delta + q).
		\end{tikzcd}
	\end{center}
	where the dashed arrows denote compositions~$pr_\q^V \circ \Phi$ of a~$\delta$- and~$\qhalf$-invertible morphism, and~$\Psi(q) \circ i_\q^W(q) \circ [q]$ of their~$\delta$- and~$\qhalf$-inverses. Consequently, the composition
	\begin{align*}
		\Psi(q) \circ i_\q^W(q) \circ [q]  \circ pr_\q^V \circ \Phi
	\end{align*}
	is simply the inner morphism~$[2\delta + q]_V.$ Further,
	\begin{align*}
		\tilde{\Psi} \circ \tilde{\Phi} = pr_\q^V(2\delta + q) \circ [2\delta + q]_V \circ i_\q^V
	\end{align*}
	is simply the inner morphism~$[2\delta + q]_{V_\q}$. The proof that~$\tilde{\Phi}(2\delta+q) \circ \tilde{\Psi} = [2\delta + q]_{W_\q}$ follows in a similar way. Draw the composition~$\tilde{\Phi}(2\delta+q) \circ \tilde{\Psi}$ in a diagram as
	\begin{center}
		\begin{tikzcd}
			W \arrow[r, "\Psi"] \arrow[ddrrr, dashed] & V(2\delta) \arrow[ddrr, "pr_\q^V(2\delta+q) \circ {[q]}" description, two heads] & & V(2\delta+ q) \arrow[r, "\Phi(2\delta+q)"] & W(2\delta +q) \arrow[dd, "pr_\q^W(2\delta+q)", two heads, shift left] \\
			&&&& \\
			W_\q \arrow[rrr, "\tilde{\Psi}"'] \arrow[uu, "i_\q^W", hook] & & & V_\q (2\delta + q) \arrow[uu, "{i_\q^V (2\delta+q)}"', hook, shift left] \arrow[r, "\tilde{\Phi}(2\delta+q)"'] \arrow[uur, dashed] & W_\q(2\delta+q),
		\end{tikzcd}
	\end{center}
	where we use the fact that morphisms commute with the inner morphisms~$[q]$ to equivalently write
	\begin{align*}
		\tilde{\Psi} = pr_\q^V(2\delta+q) \circ \Psi(q) \circ (i_\q^W(q) \circ [q]) \qquad \text{ as } \qquad (pr_\q^V(2\delta+q) \circ [q]) \circ \Psi \circ i_\q^W.
	\end{align*}
	The dashed arrows again denote compositions~$(\Phi \circ i_\q^V)(2\delta+q)$ of a~$\delta$- and~$\qhalf$-invertible morphism (on the right), and~$pr_\q^V(2\delta+q) \circ [q] \circ \Psi$ of their~$\delta$- and~$\qhalf$-inverses (on the left). The composition of the dashed arrows is therefore the family of inner morphisms~$[2\delta +q]_W$ and the composition
	\begin{align*}
		pr_\q^W(2\delta+q) \circ [2\delta + q]_W \circ i_\q^W 
	\end{align*}
	is also a family of inner morphisms,~$[2\delta+q]_{W_\q}.$
\end{proof}

The following theorem lists the conditions that must be satisfied so that the induced morphisms from \cref{c:induced_d'_invertible_morphism} decompose as ladder persistence modules.
\qmain

\begin{proof}
	All three statements of this proposition are a direct consequence of~\cref{thm:Xi_constant}. Let us provide details only for the proof of the last one, since we follow the same approach in all cases.
	
	The morphism~$\tilde{\Phi} = pr_\q^W \circ \Phi \circ i_\q^V$ is a~$(\delta + \qhalf)$-invertible morphism between~$V_\q$ and~$W_\q$ by~\cref{c:induced_d'_invertible_morphism}. Since
	\begin{align*}
		\delta + \qhalf < \half \min(\Xi(V_\q), \Xi(W_\q))
	\end{align*}
	holds by our assumption, we can apply~\cref{thm:Xi_constant} to~$\tilde{\Phi}$. This means there is a pair of barcode bases~$\mathcal{B}_{V_\q}$ and~$\mathcal{B}_{W_\q}$ in which~$\Phi'$ decomposes as a ladder persistence module. As noted before, the fact that~$V_\q$ is the~$q$-coarse part of a $q$-splitting of~$V$ means that~$\mathcal{B}_{V_\q}$ can be supplemented with any barcode basis~$\mathcal{B}_{V_{<q}}$ to form a barcode basis for~$V$. By extending both~$\mathcal{B}_{V_\q}$ and~$\mathcal{B}_{W_\q}$ we obtain barcode bases~$\mathcal{B}_V$ and~$\mathcal{B}_W$ claimed to exist by the theorem.
\end{proof}

\subsection{\texorpdfstring{$q$}\ -Coarse Ladder Decompositions of a \texorpdfstring{$\delta$}\ -Interleaving Pair} \label{subsec:qcoarse_interleaving}

As before, we can leverage the correspondence between~$\delta$-invertible morphisms and~$\delta$-interleavings to obtain a statement similar to \cref{prop:q-decompositions_invertible} that holds for a single morphism in a~$\delta$-interleaving pair. We state the analogues of the two theoretical results of \cref{subsec:qcoarse_invertible} here, omitting all the proofs, since they are simple exercises in applying the correspondence from \cref{r:interleaving_vs_invertible}. Comparing the~$q$-coarse ladder decompositions of the two morphisms making a~$\delta$-interleaving we again show that there is a nice correspondence between them for all bars of sufficient length.

\begin{lemma} [Analogue of \cref{c:induced_d'_invertible_morphism}] \label{l:induced_d'_interleavings}
	Let~$(\Phi, \Psi)$ be a~$\delta$-interleaving pair between modules~$V$ and~$W$. They induce morphisms:
	\begin{enumerate}
		\item $\varphi_\q^W(\delta) \circ \Phi \colon V \to W_\q(\delta+\qhalf)$ and~$\Psi(\qhalf) \circ \tilde{\varphi}_\q^W \colon W_\q \to V(\delta+\qhalf)$ making a~$(\delta + \qhalf)$-interleaving pair, \label{int1}
		\item $\Phi(\qhalf) \circ \tilde{\varphi}_\q^V \colon V_\q \to W(\delta+\qhalf)$ and~$\varphi_\q^V(\delta) \circ \Psi \colon W \to V_\q(\delta+\qhalf)$ making a~$(\delta + \qhalf)$-interleaving pair,\label{int2}
		\item $\tilde{\Phi}\colon V_\q \to W_\q(\delta+\qhalf)$ and~$\tilde{\Psi}:W_\q \to V_\q(\delta+\qhalf)$ where
		\begin{align*}
			&\tilde{\Phi} = \varphi_\q^W(\delta) \circ \Phi \circ i_\q^V, \\
			&\tilde{\Psi} = \varphi_\q^V(\delta) \circ \Psi \circ i_\q^W,
		\end{align*}
		which make a~$(\delta + \qhalf)$-interleaving pair. \label{int3}
	\end{enumerate}
\end{lemma}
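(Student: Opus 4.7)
The plan is to derive each part by composing the given $\delta$-interleaving $(\Phi,\Psi)$ with the canonical $\qhalf$-interleaving between a module and its $q$-coarse part. By Property~\ref{l:prop3} of \cref{l:q-coarse_properties}, there is a $\qhalf$-interleaving $(\varphi_\q^V,\tilde{\varphi}_\q^V)$ between $V$ and $V_\q$, and analogously $(\varphi_\q^W,\tilde{\varphi}_\q^W)$ between $W$ and $W_\q$. The key input I would use is that composing a $\delta_1$-interleaving with a $\delta_2$-interleaving yields a $(\delta_1+\delta_2)$-interleaving; this is standard and can be verified directly by stacking the two pairs of commuting interleaving triangles and exploiting that morphisms of persistence modules intertwine with the inner morphisms.

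For part~(\ref{int1}) I would compose $(\Phi,\Psi)$ with $(\varphi_\q^W,\tilde{\varphi}_\q^W)$ on the codomain side: the forward morphism becomes $\varphi_\q^W(\delta)\circ\Phi\colon V\to W_\q(\delta+\qhalf)$ and the backward morphism becomes $\Psi(\qhalf)\circ\tilde{\varphi}_\q^W\colon W_\q\to V(\delta+\qhalf)$. For part~(\ref{int2}) I would instead compose on the domain side with $(\tilde{\varphi}_\q^V,\varphi_\q^V)$, and for part~(\ref{int3}) with both. In each case, verifying the two defining interleaving triangles reduces to the identity $\tilde{\varphi}_\q^{(\cdot)}(\qhalf)\circ\varphi_\q^{(\cdot)}=[q]$ (and its symmetric counterpart from Property~\ref{l:prop3} of \cref{l:q-coarse_properties}) together with the defining equations $\Psi(\delta)\circ\Phi=[2\delta]_V$ and $\Phi(\delta)\circ\Psi=[2\delta]_W$ of the original interleaving.

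Alternatively, and more in the spirit of the remark preceding the lemma, one can short-circuit this via the correspondence of \cref{r:interleaving_vs_invertible}: translate $(\Phi,\Psi)$ into the $\delta$-invertible morphism $\Phi\colon V\to W(\delta)$, whose $\delta$-inverse is the shifted morphism $\Psi(\delta)$; apply \cref{c:induced_d'_invertible_morphism} to obtain three $(\delta+\qhalf)$-invertible morphisms; and then translate each back into a $(\delta+\qhalf)$-interleaving pair. In either route the main obstacle is purely bookkeeping: one must track all of the shift parameters and repeatedly invoke the identifications $(M(\delta))_\q=(M_\q)(\delta)$, $pr_\q^{M(\delta)}=pr_\q^M(\delta)$, $i_\q^{M(\delta)}=i_\q^M(\delta)$, together with the commutativity of morphisms with inner morphisms, so that the resulting compositional formulas line up exactly with those stated in the lemma.
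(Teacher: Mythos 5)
Your proposal is correct and in fact sketches both viable routes. The paper's stated approach is your second, ``alternative'' one: it declares the proofs ``simple exercises in applying the correspondence from \cref{r:interleaving_vs_invertible}'' to \cref{c:induced_d'_invertible_morphism}. Your primary route---directly stacking the $\delta$-interleaving $(\Phi,\Psi)$ with the canonical $\qhalf$-interleaving $(\varphi_\q^{(\cdot)},\tilde{\varphi}_\q^{(\cdot)})$ from Property~\ref{l:prop3} of \cref{l:q-coarse_properties} and invoking additivity of interleavings under composition---is genuinely different and arguably a better fit here, because the formulas in the lemma are already written in the interleaving convention (with $\varphi_\q$, which carries a built-in $[\qhalf]$ shift) rather than the $\delta$-invertible convention (with bare $pr_\q$ and $i_\q$). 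If you instead go through \cref{c:induced_d'_invertible_morphism} you land on, e.g., $pr_\q^{W(\delta)}\circ\Phi\colon V\to W_\q(\delta)$, which differs from the asserted $\varphi_\q^W(\delta)\circ\Phi\colon V\to W_\q(\delta+\qhalf)$ by an extra inner-morphism factor and a shift of the target module; reconciling the two requires precisely the shift bookkeeping you flag in your last sentence. The direct route avoids that re-normalization step: the two interleaving triangles reduce immediately to $\tilde{\varphi}_\q^W(\qhalf)\circ\varphi_\q^W=[q]_W$ together with $\Psi(\delta)\circ\Phi=[2\delta]_V$ and $\Phi(\delta)\circ\Psi=[2\delta]_W$, exactly as you say. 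Either way the argument is sound.
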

\begin{center}
\begin{tikzcd}[scale=0.9]
	V \arrow[r, "\Phi"] \arrow[rd, "\varphi_\q^W(\delta) \circ \Phi"', dashed] & W(\delta) \arrow[d, "\varphi_\q^W(\delta)"] &&
	V(\qhalf) \arrow[r, "\Phi(\qhalf)"] & W(\delta+\qhalf)  &&
	V \arrow[dr, "\varphi_\q^W(\delta) \circ \Phi"] & \\
	
	& W_\q(\delta+\qhalf) &&
	V_\q \arrow[u, "\tilde{\varphi}^V_\q"] \arrow[ru, "\Phi(\qhalf) \circ \tilde{\varphi}_{\geq q}^V"', dashed]& &&
	V_\q \arrow[u, hook, "i^V_\q"] \arrow[r, dashed, "\tilde{\Phi}"] \arrow[rd, "\Phi(\qhalf) \circ \tilde{\varphi}_{\geq q}^V"',] & W_\q(\delta+\qhalf)\\
	
	& && & && & W(\delta+\qhalf) \arrow[u, two heads, "pr_\q^W"']
\end{tikzcd}
\end{center}

\begin{theorem}[Analogue of \cref{prop:q-decompositions_invertible}] \label{prop:q-decompositions}
	Let~$\Phi\colon V \to W(\delta)$ be a morphism which is part of a~$\delta$-interleaving pair. If there exists a parameter~$q$ such that
	\begin{enumerate}
		\item $\delta < \half \min(\Xi(V), \Xi(W_{\geq q})) - \qhalf$, then the induced morphism~$\varphi_\q^W(\delta) \circ \Phi$ decomposes as a ladder persistence module. \label{interleaving1}
		\item $\delta < \half \min(\Xi(V_{\geq q}), \Xi(W)) - \qhalf$, then the induced morphism~$\Phi(\qhalf) \circ \tilde{\varphi}_{\geq q}^V$ decomposes as a ladder persistence module. \label{interleaving2}
		\item $\delta < \half \min(\Xi(V_{\geq q}), \Xi(W_{\geq q})) - \qhalf$, then the induced morphism~$\tilde{\Phi}$ from~\cref{l:induced_d'_interleavings} decomposes as a ladder persistence module. \label{interleaving3}
	\end{enumerate}
	All these decompositions are obtained in partial barcode bases that can be extended to barcode bases of modules~$V$ and~$W$.
\end{theorem}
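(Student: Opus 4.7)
The plan is to deduce each of the three cases directly from the corresponding case of \cref{prop:q-decompositions_invertible}, via the correspondence of \cref{r:interleaving_vs_invertible}. First, I would identify $\Phi\colon V \to W(\delta)$, viewed as half of a $\delta$-interleaving pair between $V$ and $W$, with the $\delta$-invertible morphism $\hat{\Phi}\colon V \to \hat{W}$, where $\hat{W} := W(\delta)$ is relabeled as a persistence module in its own right. Since shifting the indexing parameter amounts to a uniform translation of every bar, nestedness is shift-invariant; in particular $\Xi(\hat{W}) = \Xi(W)$ and $\Xi(\hat{W}_\q) = \Xi(W_\q)$. Consequently, the three inequalities hypothesized in the present theorem translate verbatim into the three hypotheses of \cref{prop:q-decompositions_invertible} applied to $\hat{\Phi}$.

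Applying that theorem yields ladder decompositions of the three $(\delta+\qhalf)$-invertible morphisms $pr_\q^{\hat{W}} \circ \hat{\Phi}$, $\hat{\Phi} \circ i_\q^V$, and $pr_\q^{\hat{W}} \circ \hat{\Phi} \circ i_\q^V$ from \cref{c:induced_d'_invertible_morphism}, under the respective hypotheses. Next I would check, case by case, that each of these differs from its counterpart in \cref{l:induced_d'_interleavings} --- namely $\varphi_\q^W(\delta)\circ\Phi$, $\Phi(\qhalf) \circ \tilde{\varphi}_\q^V$, and $\tilde{\Phi}$ --- only by pre- or post-composition with an inner morphism $[\qhalf]$, which implements precisely the $\qhalf$-shift relating the invertible and interleaving viewpoints. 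Because composing with an inner morphism does not alter the underlying commutative ladder diagram except to shift the indexing of one side by $\qhalf$, any ladder decomposition of the $(\delta+\qhalf)$-invertible morphism immediately produces one for its interleaving counterpart, with the corresponding bars on the shifted side translated by $\qhalf$.

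The final clause of the theorem --- that the resulting barcode bases extend to barcode bases of $V$ and $W$ themselves --- is inherited verbatim from the corresponding assertion of \cref{prop:q-decompositions_invertible}: any barcode basis of $V_\q$ (respectively $W_\q$) extends to a barcode basis of $V$ (respectively $W$) by adjoining an arbitrary barcode basis of the complement $V_{<q}$ (respectively $W_{<q}$) in the chosen $q$-splitting. I do not anticipate a substantive obstacle in this argument; the closest thing to a sticking point will be the careful bookkeeping of shifts in the definitions of $\varphi_\q^V$ and $\tilde{\varphi}_\q^V$, and verifying case by case that the composition identities of \cref{l:induced_d'_interleavings} align under the correspondence with those of \cref{c:induced_d'_invertible_morphism}.
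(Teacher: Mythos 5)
Your proposal follows the paper's intent (reduce to the $\delta$-invertible case via \cref{r:interleaving_vs_invertible}), but takes a detour that the paper almost certainly means to avoid. You apply \cref{prop:q-decompositions_invertible} to $\hat{\Phi}$ and then try to transfer the resulting decomposition across a post-composition with the inner morphism $[\qhalf]$. The transfer is where your reasoning is loose: composing $\alpha \colon V \to W_\q(\delta)$ with $[\qhalf]$ does \emph{not} merely ``shift the indexing of one side by $\qhalf$''; it genuinely changes the vertical maps of the commutative ladder. Concretely, if $\mathbf{R}_{J_1}^{J_2}$ appears in the decomposition of $\alpha$ with $|J_1 \cap J_2| < \qhalf$, then $[\qhalf]$ kills the vertical map entirely and the corresponding summand of $[\qhalf]\circ\alpha$ is $\mathbf{I}^+_{J_2}\oplus\mathbf{I}^-_{J_1(\qhalf)}$, not $\mathbf{R}_{J_1(\qhalf)}^{J_2}$ (the latter is not even a legitimate elementary type here, since $J_1(\qhalf)\not\preceq J_2$). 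So it is not true that the decomposition carries over ``immediately, with the corresponding bars on the shifted side translated by $\qhalf$.'' Your conclusion is nevertheless salvageable---$[\qhalf]\circ\alpha$ still decomposes into elementary summands, because each $[\qhalf]\circ L_k$ for $L_k$ elementary is either elementary or further splits into $\mathbf{I}^\pm$ pieces---but this needs to be said, not waved at as ``bookkeeping of shifts.'' The cleaner route, which I believe is what the paper intends by ``applying the correspondence,'' skips the transfer entirely: \cref{l:induced_d'_interleavings} already exhibits each induced morphism (e.g.\ $\varphi_\q^W(\delta)\circ\Phi \colon V \to W_\q(\delta+\qhalf)$) as half of a $(\delta+\qhalf)$-interleaving; by \cref{r:interleaving_vs_invertible} it is therefore $(\delta+\qhalf)$-invertible; since nestedness is shift-invariant, $\Xi(W_\q(\delta+\qhalf)) = \Xi(W_\q)$, so the hypothesis reads $\delta + \qhalf < \half\min(\Xi(V),\Xi(W_\q))$; now \cref{thm:Xi_constant} applies directly, with no $[\qhalf]$-transfer to massage. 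Your argument for the final extensibility clause is fine and coincides with the paper's.
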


Notice how \cref{l:induced_d'_interleavings} and \cref{prop:q-decompositions} fit together. Given a~$\delta$-interleaving pair~$(\Phi, \Psi)$, morphism~$\Phi$ satisfies the assumptions of case~\eqref{interleaving1} of \cref{prop:q-decompositions} for some~$q$ if and only if the morphism~$\Psi$ satisfies the assumptions of case~\eqref{interleaving2} of \cref{prop:q-decompositions} for the same parameter~$q$. The induced morphisms each of them gives make a~$(\delta + \qhalf)$-interleaving pair by \cref{l:induced_d'_interleavings}\eqref{int1}. By switching the roles of~$\Phi$ and~$\Psi$ we can see the reverse also holds giving us the $(\delta + \qhalf)$-interleaving pair from \cref{l:induced_d'_interleavings}\eqref{int2}. Further, morphisms~$\Phi$ and~$\Psi$ satisfy the assumptions of case~\eqref{interleaving3} simultaneously, giving us the $(\delta + \qhalf)$-interleaving pair from \cref{l:induced_d'_interleavings}\eqref{int3}. This means \cref{prop:q-decompositions} assures that whenever one of the morphisms in the pair can be decomposed, then so can the other. More importantly, we can compare them.

Let~$(\Phi', \Psi')$ be any of the induced~$(\delta +\qhalf)$-interleaving pairs from \cref{l:induced_d'_interleavings} for which the ladder decomposition can be obtained by \cref{prop:q-decompositions}. By applying \cref{cor:entries_in_matching_form} and \cref{thm:similarity_of_bases} to~$(\Phi', \Psi')$ we obtain the following result.

\begin{corollary}\label{cor:same_entries_qquasi}
	Given a matching form of~$M_{\Phi'}$, we can obtain such a matching form of~$M_{\Psi'}$ that
	\begin{align*}
		M_\Phi'(r, c) = M_\Psi'(c, r)
	\end{align*}
	whenever~$|J_c| \geq 2\delta + q$ and~$|J_r| \geq 2\delta+q$. For such two bars and any~$\mu \in \mathbb{N}$, the following statements are equivalent
	\begin{itemize}
		\item $(\mathbf{R}_{J_r}^{J_c})^\mu$ appears in the ladder decomposition of~$\Phi'$,
		\item $(\mathbf{R}_{J_c}^{J_r})^\mu$ appears in the ladder decomposition of~$\Psi'$.
	\end{itemize}
\end{corollary}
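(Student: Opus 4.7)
The plan is to apply the already-established results of \cref{subsec:ladder_decomp_of_interleaving_pair} directly to the induced interleaving pair, rather than to redo the column-by-column reduction argument in the $q$-coarse setting. By \cref{l:induced_d'_interleavings}, the pair $(\Phi', \Psi')$ is itself a genuine $(\delta + \qhalf)$-interleaving pair between persistence modules whose nestedness constants satisfy the strict inequality required by \cref{thm:Xi_constant}: this is exactly the hypothesis of whichever case of \cref{prop:q-decompositions} we are in (\eqref{interleaving1}, \eqref{interleaving2}, or \eqref{interleaving3}). So the pair $(\Phi', \Psi')$ sits inside the setting for which \cref{cor:entries_in_matching_form} and \cref{thm:similarity_of_bases} were proven, with the interleaving parameter replaced by $\delta' = \delta + \qhalf$.

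For the first claim I would simply invoke \cref{cor:entries_in_matching_form} with this new parameter $\delta'$: given a matching form of $M_{\Phi'}$, the corollary produces a matching form of $M_{\Psi'}$ with $M_{\Phi'}(r,c) = M_{\Psi'}(c,r)$ whenever $|J_c|, |J_r| \geq 2\delta' = 2\delta + q$. The only thing worth checking is that the notion of ``matching form'' and the admissible operations \ref{AO1}, \ref{AO2}, \ref{AO3} transfer cleanly to barcode bases of $V_{\geq q}$ and $W_{\geq q}$; but since we are viewing these as honest persistence modules in their own right, this is automatic.

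For the equivalence of summand appearance, the argument is the same translation: apply \cref{thm:similarity_of_bases} to the interleaving pair $(\Phi', \Psi')$ with parameter $\delta'$, noting that $\delta' < \tfrac{1}{2}\min(\Xi(V'), \Xi(W'))$ where $V',W'$ are the relevant (possibly $q$-coarse) source and target modules. The equivalence between the appearance of $(\mathbf{R}_{J_r}^{J_c})^\mu$ in the ladder decomposition of $\Phi'$ and $(\mathbf{R}_{J_c}^{J_r})^\mu$ in the ladder decomposition of $\Psi'$ is then exactly the conclusion of that theorem, once one accounts for the fact that the shift we used to convert interleavings into $\delta$-invertible morphisms (\cref{r:interleaving_vs_invertible}) has already been absorbed into the labelling of the bars $J_c, J_r$ in the statement.

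I do not expect any serious obstacle here: the proof is essentially a bookkeeping exercise that replays the proofs of \cref{cor:entries_in_matching_form} and \cref{thm:similarity_of_bases} with $\delta$ replaced by $\delta + \qhalf$ and with $V, W$ replaced by the appropriate domain and codomain of $(\Phi', \Psi')$. The mild subtlety, if any, is only in confirming that the length thresholds match up, namely that $|J|\geq 2\delta + q$ is precisely the condition $|J|\geq 2\delta'$ needed to invoke the earlier results, and that the barcode bases of the $q$-coarse parts extend to barcode bases of $V$ and $W$ as noted in the statement of \cref{prop:q-decompositions}, so that the conclusion is meaningful at the level of $V$ and $W$ themselves.
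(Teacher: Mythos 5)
Your proposal matches the paper's own proof exactly: the paper also dispatches this corollary in a single sentence by applying \cref{cor:entries_in_matching_form} and \cref{thm:similarity_of_bases} to the induced $(\delta+\qhalf)$-interleaving pair $(\Phi',\Psi')$, noting that \cref{l:induced_d'_interleavings} and \cref{prop:q-decompositions} place that pair squarely within their hypotheses with $\delta' = \delta+\qhalf$. Your extra remarks about the $2\delta' = 2\delta+q$ length threshold and the absorption of the bar-shift into the labelling are accurate bookkeeping observations that the paper leaves implicit.
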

Since the barcode bases in which we write the matrix~$M_\Phi'$ can be expanded to barcode bases of the whole persistence modules in the domain and codomain of~$\Phi$, we can think of~$M_\Phi'$ as a sub-matrix of~$M_\Phi$.

	\section{Induced Partial Matchings} \label{sec:induced_partial_matchings}

Here we give a brief introduction to multisets, which barcodes are examples of. A \emph{multiset}~$S$ is a set where each element~$s \in S$ has a non-zero multiplicity~$\mu(s) \in \Z_{>0}$. An isomorphism of multisets is a bijection of the underlying sets that preserves the multiplicities. A \emph{sub-multiset} is a subset~$T \subseteq S$ in which the multiplicity of each element is not bigger than its multiplicity in~$S$. A morphism of multisets~$S_1$ and~$S_2$ consists of sub-multisets~$T_1 \subseteq S_1$ and~$T_2 \subseteq S_2$ and an isomophism~$\chi: T_1 \to T_2$. We call it a \emph{partial matching} between~$S_1$ and~$S_2$ and denote it by 
\begin{align*}
	\chi \colon S_1 \todot S_2.
\end{align*}
We often write and define partial matchings as multisets of pairs~$(t_1, \chi(t_1)) \in T_1 \times T_2$. They appear in the definitions of various notions of distances on the space of barcodes, such as the bottleneck~\cite{bottleneck_distance} and the $p$-Wasserstein distance~\cite{wasserstein_distance}. More precisely, each of these distances~$d(B_1, B_2)$ is the smallest cost of a partial matching between~$B_1$ and~$B_2$, where the associated cost is different for each of the distances. In the case of the bottleneck distance, which is relevant for the use in this paper, the \emph{cost of a partial matching}~$\chi \colon S_1 \todot S_2$ is
\begin{align*}
	\max \Big\{  \max_{(I, J) \in \chi} \{ \max (|i_1 - j_1|, |i_2 - j_2|)\}, \max_{\substack{J \in B_1 \cup B_2 \\ J \notin \chi}} \half(j_2 - j_1)  \Big\},
\end{align*}
where~$I=[i_1, i_2]$ and~$J=[j_1, j_2]$.

Given a morphism between one-parameter persistence modules, one might ask whether it induces a partial matching on the level of barcodes. This question was central in the proof of \emph{algebraic stability theorem}~\cite{bauer2013induced}, when a~\emph{BL induced matching} (BL stands for ``Bauer and Lesnick'') was introduced. In this section we look at an alternative construction of a morphism induced partial matching given by the ladder decompositions. We compare them to the BL-induced matching and show that they are an example of \emph{basis-independent induced matchings}~\cite{basis_independent_matchings}. We conclude the paper with the matchings induced by the ladder decompositions of the coarser versions of the~$\delta$-invertible morphisms.

\subsection{Ladder Decomposition Induced Partial Matching}

In~\cite{jacquard2021space}, Jacquard et al.\ observe that an alternative definition of an induced partial matching can be retrieved from the ladder decomposition of the morphism in question.
\begin{corollary}[of \cref{thm:jacquard_induced_matching}] \label{cor:matching_induced_by_decomp}
	Ladder decomposition of morphism~$\Phi \colon V \to W$ induces a matching of barcodes~$\Barc{V}$ and~$\Barc{W}$ defined as
	\begin{align*}
		&\chi_\Phi \colon \Barc{V} \todot \Barc{W} \\
		&\chi_\Phi = \{ ((J_1, J_2), r_{J_1}^{J_2}) \mid \mathbf{R}_{J_1 }^{J_2} \text{ appears in ladder decomposition of }\Phi \}.
	\end{align*}
	Note that~$\chi_\Phi$ is a multiset in which each pair~$(J_1, J_2)$ appears with the multiplicity~$r_{J_1}^{J_2}$.
\end{corollary}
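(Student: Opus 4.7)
The plan is to derive this corollary as a direct bookkeeping consequence of the ladder decomposition in~\cref{thm:jacquard_induced_matching} together with the uniqueness of the interval decomposition from the structure theorem. Recall that a partial matching $\chi_\Phi \colon \Barc{V} \todot \Barc{W}$ consists of sub-multisets $T_V \subseteq \Barc{V}$ and $T_W \subseteq \Barc{W}$ together with an isomorphism of multisets $T_V \to T_W$. I would therefore show that the data defining $\chi_\Phi$ in the statement cuts out such sub-multisets, and that it pairs them up bijectively.

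First, I would start from the decomposition $(V, W, \Phi) \cong \bigoplus_{J_1 \preceq J_2} (\mathbf{R}_{J_1}^{J_2})^{r_{J_1}^{J_2}} \oplus \bigoplus_{J} (\mathbf{I}^+_J)^{d_J^+} \oplus \bigoplus_{K} (\mathbf{I}^-_K)^{d_K^-}$ provided by~\cref{thm:jacquard_induced_matching} and apply the obvious functor sending a ladder persistence module to its source, respectively its target, one-parameter persistence module. Reading off $V$ from the source side yields the interval decomposition $V \cong \bigoplus_{J_1 \preceq J_2} (k_{J_1})^{r_{J_1}^{J_2}} \oplus \bigoplus_{J} (k_J)^{d_J^+}$, and analogously for $W$ on the target side. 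By the uniqueness part of the structure theorem referenced in~\eqref{eq:interval_decomposition}, the multiplicity of a bar $J$ in $\Barc{V}$ is $\mu_J^V = d_J^+ + \sum_{J_2\,:\,J \preceq J_2} r_{J}^{J_2}$, and similarly $\mu_K^W = d_K^- + \sum_{J_1\,:\,J_1 \preceq K} r_{J_1}^{K}$.

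Next, I would define $T_V$ to be the sub-multiset of $\Barc{V}$ in which each bar $J$ occurs with multiplicity $\sum_{J_2} r_J^{J_2}$, and similarly $T_W$ with multiplicities $\sum_{J_1} r_{J_1}^{K}$; the multiplicity inequalities $\sum_{J_2} r_J^{J_2} \leq \mu_J^V$ and $\sum_{J_1} r_{J_1}^K \leq \mu_K^W$ obtained in the previous step guarantee that these are genuine sub-multisets. Then I would declare $\chi_\Phi$ to send, for each pair $J_1 \preceq J_2$, the $r_{J_1}^{J_2}$ copies of $J_1$ in $T_V$ associated with the summand $(\mathbf{R}_{J_1}^{J_2})^{r_{J_1}^{J_2}}$ to the $r_{J_1}^{J_2}$ copies of $J_2$ in $T_W$ associated with the same summand; bijectivity then holds at each element of $T_V$ by construction, and summing the multiplicities on either side recovers the multiplicities defining $T_V$ and $T_W$.

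There is no genuine obstacle here: the argument is almost entirely accounting, and the only thing to watch is notation. The mildest subtlety is that in the expression $\{((J_1, J_2), r_{J_1}^{J_2})\,|\,\mathbf{R}_{J_1}^{J_2} \text{ appears}\}$ the pair $(J_1, J_2)$ may repeat if the summand $\mathbf{R}_{J_1}^{J_2}$ appears for several reasons — but this is absorbed into the single multiplicity $r_{J_1}^{J_2}$, and the convention that $r_{J_1}^{J_2} = 0$ when $\mathbf{R}_{J_1}^{J_2}$ does not appear keeps the formula well-posed. A closing remark I would add is that the matching $\chi_\Phi$ depends a priori on a choice of ladder decomposition, but is well-defined on multisets because the integers $r_{J_1}^{J_2}$, $d_J^+$, $d_K^-$ are invariants of $(V, W, \Phi)$ up to isomorphism of ladder persistence modules.
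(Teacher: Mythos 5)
The paper gives no explicit proof of this corollary: it is presented as an immediate observation following \cref{thm:jacquard_induced_matching}, with only the remark afterwards that uniqueness of the ladder decomposition implies uniqueness of the matching. Your proposal fills in exactly the bookkeeping that the paper leaves implicit, and the overall structure (extract the interval decompositions of $V$ and $W$ from the two rows of the ladder decomposition, read off multiplicities by uniqueness of the structure theorem, define $T_V$ and $T_W$ from the $\mathbf{R}$-summands and verify the sub-multiset inequalities) is the right and intended argument.

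One small but worth-fixing point of bookkeeping: in the paper's convention the superscript of $\mathbf{R}^{J_2}_{J_1}$ is the \emph{source}-side interval and the subscript the \emph{target}-side interval (see the introductory description of $\mathbf{R}^J_K$ and the diagram for $\mathbf{R}_{[i_1,j_1]}^{[i_2,j_2]}$). Hence reading off the source of the ladder decomposition yields
\begin{align*}
V \cong \bigoplus_{J_1 \preceq J_2} \big(k_{J_2}\big)^{r_{J_1}^{J_2}} \oplus \bigoplus_J \big(k_J\big)^{d_J^+},
\end{align*}
with $k_{J_2}$ (not $k_{J_1}$) appearing, and symmetrically the target side gives $W \cong \bigoplus_{J_1 \preceq J_2} (k_{J_1})^{r_{J_1}^{J_2}} \oplus \bigoplus_K (k_K)^{d_K^-}$. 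Your multiplicity formulas and the definitions of $T_V$ and $T_W$ should be relabeled accordingly (the $V$-side multiplicity of a bar $J$ is $d_J^+ + \sum_{J_1 : J_1 \preceq J} r_{J_1}^{J}$, etc.). This is purely a labeling swap and does not affect the validity of the argument; note however that the paper's own statement of the corollary is slightly terse on which index lives in $\Barc{V}$ versus $\Barc{W}$, so it is worth being explicit. With that relabeling, your proof is correct and complete, and your closing remark about well-definedness up to isomorphism matches the paper's own comment.
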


The uniqueness of the ladder decomposition implies that the induced matching is also unique. \cref{thm:Xi_constant} assures the existence of ladder decomposition for a wider range of~$\delta$-invertible morphisms, which induce partial matchings in a similar way. By the correspondence from \cref{r:interleaving_vs_invertible} the same holds for morphisms that are part of a~$\delta$-interleaving pair. The following results describe the properties of the matchings they induce.
\cost
\begin{proof}
	The decomposition
	\begin{align*}
		(V, W, \Phi) \cong \bigoplus_{[i_1, j_1] \preceq [i_2, j_2]}\Big( \mathbf{R}_{[i_1, j_1]}^{[i_2, j_2]} \Big)^{r_{[i_1, j_1]}^{[i_2, j_2]}} \oplus \bigoplus_{i \leq j} \Big( \mathbf{I}^+ [i_1, j_1] \Big)^{d_{ij}^+} \oplus \bigoplus_{i \leq j} \Big( \mathbf{I}^- [i_1, j_1] \Big)^{d_{ij}^-}
	\end{align*}
	is obtained by finding a pair of barcode bases~$(\mathcal{B}_V, \mathcal{B}_W)$ in which the matrix representation of~$\Phi$ is in matching form. Remember, each appearance of~$\mathbf{R}_{J_r}^{J_c}$ in the decomposition corresponds to a non-zero entry in the matrix~$M_\Phi$ in a row~$r$ belonging to~$J_r$ and column~$c$ belonging to column~$J_c$.
	Similarly, each appearance of~$\mathbf{I}^+_J$ corresponds to an empty column in~$M_\Phi$ belonging to bar~$J$, and each appearance of~$\mathbf{I}^-_J$ corresponds to an empty row in~$M_\Phi$ belonging to bar~$J$.
	
	First, let us prove that the matched bars contribute a cost smaller than $\delta$. If the bars~$[i_2, j_2] \in \Barc{V}$ and~$[i_1, j_1] \in \Barc{W}$ are matched, a generator of bar~$x_{[i_1, i_1]}$ is in the support of the image of the generator~$x_{[i_2, j_2]}$ with the $\delta$-invertible morphism~${\Phi}$. By \cref{l:existence_of_matched_bar} this implies that~$|i_2 - i_1| \leq \delta$ and~$|j_2-j_1| \leq \delta$. Therefore, the cost of matching these bars is smaller or equal to~$\delta.$
	
	All there is left to prove is that the cost the bars that are left unmatched contribute is less than~$\delta$ as well. Assume~$\mathbf{I}^+[i, j]$ for~$[i, j]\in \Barc{V}$ appears as a summand in the decomposition. Since the~$\supp \Phi(x_{[i, j]})$ is empty, \cref{l:existence_of_matched_bar} implies that $|j-i| < 2\delta$ and the cost of not matching it is smaller than~$\delta.$ In a similar manner one can show that if~$\mathbf{I}^-[i, j]$ appears as a summand in the decomposition, then the cost of not matching~$[i, j]\in \Barc{W}$ is smaller than~$\delta$.
\end{proof}
\matchings
\begin{proof}
	By \cref{thm:similarity_of_bases},~$M_\Phi(r, c) = 1$ implies~$M_\Phi(c, r) = 1$ for index~$c$ corresponding to the bar~$J_V$ and~$r$ corresponding to bar~$J_W$. Consequently the multiplicity~$r_{J_W}^{J_V}$ of~$\mathbf{R}_{J_W}^{J_V}$ in the ladder decomposition of~$\Phi$ is smaller or equal to the multiplicity~$r_{J_V}^{J_W}$ of~$\mathbf{R}_{J_W}^{J_V}$ in the ladder decomposition of~$\Psi$. By using the same arguments with the roles of~$\Phi$ and~$\Psi$ reversed, we obtain that $r_{J_V}^{J_W} \leq r_{J_W}^{J_V}$. Considering the definition of the ladder decomposition induced partial matching, this concludes the proof.
\end{proof}
\begin{example}\label{ex:running_ex_4}
	Return to the~$\delta$-interleaving pair~$(\Phi, \Psi)$ from \cref{ex:running_ex_1,ex:running_ex_2,ex:running_ex_3}. The ladder decompositions we obtained are
	\begin{gather*}
		(V, W(\delta), \Phi) \cong \mathbf{R}^{[0,4]}_{[1,5]} \oplus \mathbf{R}^{[1,7]}_{[1,6]} \oplus \mathbf{I}^+_{[4,4]} \qquad \text{and} \qquad (W, V(\delta), \Psi) \cong \mathbf{R}^{[1,5]}_{[0,4]} \oplus \mathbf{R}^{[1,6]}_{[1,7]} \oplus \mathbf{I}^-_{[4,4]}
	\end{gather*}
	respectively. The matchings they induce are therefore
	\begin{align*}
		\chi_\Phi &= \{ ([0,4], [1,5]), \ ([1,7], [1,6]) \}\\
		\chi_\Psi &= \{ ([1,5], [0,4]), \ ([1,6], [1,7]) \}.
	\end{align*}
	They happen to be the opposite matchings, which is not always the case (they can differ on bars shorter than~$2\delta$). It is easy to see that they are of cost~$1$, which agrees with \cref{cor:cost_of_induced_matching}.
\end{example}

\subsection{Comparisson with the Bauer-Lesnick Induced Matchings} \label{subsubsec:BL}
To our knowledge the first notion of a partial matching induced by a morphism of persistence modules was introduced by Bauer and Lesnick in~\cite{bauer2013induced, bauer2020persistence}. Requiring a choice of an order on bars with the same endpoints, the construction follows three steps:
\begin{enumerate}
	\item A (general) morphism~$\Phi \colon V \to W$ is split into a surjection onto its image and inclusion into the codomain as follows:
	\begin{align*}
		V\ \xrightarrowdbl{q_\Phi}\  \text{im}\ \Phi\ \xhookrightarrow{i_\Phi}\ W.
	\end{align*}
	The matchings~$\chi^{BL}_{q_\Phi}$ and~$\chi^{BL}_{i_\Phi}$ are defined separately and later combined into a single matching~$\chi^{BL}_\Phi$ as
	\begin{align*}
		\chi^{BL}_\Phi = \{ (J_V, J_W) \mid \exists J_{\text{im}}\in \Barc{\text{im} \Phi} \text{ s.t.\ } (J_V, J_{\text{im}}) \in \chi^{BL}_{q_\Phi} \text{ and } (J_{\text{im}}, J_W) \in \chi^{BL}_{i_\Phi} \}.
	\end{align*}
	\item The matching of the injection~$i_\Phi$ is constructed for each family~$\langle \cdot , d \rangle$ of bars with the second endpoint~$d$ individually. First, both~$\langle \cdot , d \rangle_{\Barc{\text{im} \Phi}}$ and~$\langle \cdot , d \rangle_{\Barc{W}}$ are ordered by the length decreasingly, combining it with the chosen order on bars of the same length. Then the~$n$-th bar in~$\langle \cdot , d \rangle_{\Barc{\text{im} \Phi}}$ gets matched with the~$n$-th bar in~$\langle \cdot , d \rangle_{\Barc{W}}$. If the cardinalities differ, the residual bars are left unmatched. The matchings of families~$\langle \cdot , d \rangle$ for all possible endpoints~$d$ are combined into a matching~$\chi^{BL}_{i_\Phi}$.
	\item The matching of the surjection~$q_\Phi$ is constructed for each family~$\langle b, \cdot \rangle$ of bars with the first endpoint~$b$ individually. As before, families~$\langle b, \cdot \rangle_{\Barc{\text{im} \Phi}}$ and~$\langle b, \cdot \rangle_{\Barc{W}}$ are ordered as before and bars get matched based on their position in the order. The matchings of families~$\langle b, \cdot \rangle$ for all possible endpoints~$b$ are combined into a matching~$\chi^{BL}_{q_\Phi}$.
\end{enumerate}
As noted by the authors, the construction is determined by the barcodes~$\Barc{V}$,~$\Barc{W}$ and~$\Barc{\text{im} \Phi}.$ This means that the only way the morphism influences the construction is not through its image, but through the barcode~$\Barc{\text{im} \Phi}$. More explicitly, as long as the barcodes of the image of two parallel morphisms are the same, the induced matchings will coincide. Let us illustrate this with an example.

\begin{example} \label{ex:BL}
	Let~$V$ and~$W$ be the following persistence modules:
	\begin{center}
		\begin{tikzcd}
			V\colon & 0 \arrow[r, yshift=0.7ex]
			& \mathbb{F}^2 \arrow[r, yshift=0.7ex, "\text{Id}"]
			& \mathbb{F}^2 \arrow[r, yshift=0.7ex, "\text{Id}"]
			& \mathbb{F}^2 \arrow[r, yshift=0.7ex, "\pprojfirst"]
			& \mathbb{F} \arrow[r, yshift=0.7ex]
			& 0, \\
			W\colon & 0 \arrow[r, yshift=0.7ex]
			& \mathbb{F} \arrow[r, yshift=0.7ex, "\pinjfirst"]
			& \mathbb{F}^2 \arrow[r, yshift=0.7ex, "\text{Id}"]
			& \mathbb{F}^2 \arrow[r, yshift=0.7ex, " \text{Id} "]
			& \mathbb{F}^2 \arrow[r, yshift=0.7ex]
			& 0.
		\end{tikzcd}
	\end{center}
	Notice that we assume a specific choice of barcode bases in which we have written the transition maps. The bottleneck and interleaving distances between these modules are both~$1$ and there is an obvious choice for a~$1$-invertible morphism, namely
	\begin{center}
		\begin{tikzcd}
			V\colon \arrow[dd, "\Phi"] & & 0 \arrow[r] \arrow[dd, swap, "0"]
			& \mathbb{F}^2 \arrow[r, "\text{Id}"] \arrow[dd, swap, "\text{Id}"]
			& \mathbb{F}^2 \arrow[r, "\text{Id}"] \arrow[dd, "\text{Id}"]
			& \mathbb{F}^2 \arrow[r, "\pprojfirst"] \arrow[dd, "\text{Id}"]
			& \mathbb{F} \arrow[r] \arrow[dd, swap, "0"]
			& 0 \\
			&&&&&& \\
			W(\delta=1)\colon & 0 \arrow[r]
			& \mathbb{F} \arrow[r, swap, "\pinjfirst"]
			& \mathbb{F}^2 \arrow[r, swap, "\text{Id}"]
			& \mathbb{F}^2 \arrow[r, swap, "\text{Id}"]
			& \mathbb{F}^2 \arrow[r]
			& 0. &
		\end{tikzcd}
	\end{center}
	The~$1$-invertible morphism making an interleaving pair with~$\Phi$ is defined to be the identity when possible, which also determines the other components through the commuting squares. However, this is not the only choice of an interleaving pair. Alternatively, we could define a morphism~$\Psi$ as
	\begin{center}
		\begin{tikzcd}
			V \colon \arrow[dd, "\Psi"] & & 0 \arrow[r] \arrow[dd, swap, "0"]
			& \mathbb{F}^2 \arrow[r, "\text{Id}"] \arrow[dd, swap, "\pperm"]
			& \mathbb{F}^2 \arrow[r, "\text{Id}"] \arrow[dd, "\pperm"]
			& \mathbb{F}^2 \arrow[r, "\pprojfirst"] \arrow[dd, "\pperm"]
			& \mathbb{F} \arrow[r] \arrow[dd, swap, "0"]
			& 0 \\
			&&&&&& \\
			W(\delta=1)\colon & 0 \arrow[r]
			& \mathbb{F} \arrow[r, swap, "\pinjfirst"]
			& \mathbb{F}^2 \arrow[r, swap, "\text{Id}"]
			& \mathbb{F}^2 \arrow[r, swap, "\text{Id}"]
			& \mathbb{F}^2 \arrow[r]
			& 0, &
		\end{tikzcd}
	\end{center}
	and the morphism making its interleaving pair as the exchange matrix whenever possible, which again defines the other components through commuting squares. No matter which definition we choose, the barcode of the image is~$\{ [0,2], [0,2] \}$ in both cases. The {BL-induced} matchings of the two morphisms, computed as
	\begin{align*}
		\chi_{i_\Phi}^{BL} &= \left\{
		\begin{matrix}
			\langle \cdot, 2 \rangle_{\Barc{\text{im}}} & & \langle \cdot, 2 \rangle_{\Barc{W(\delta=1)}} \\
			[0, 2] & \mapstodot & [-1, 2] \\
			[0, 2] & \mapstodot & [0 , 2]
		\end{matrix} \right\} = \left\{
		\begin{matrix}
			\langle \cdot, 2 \rangle_{\Barc{\text{im}}} & & \langle \cdot, 3 \rangle_{\Barc{W}} \\
			[0, 2] & \mapstodot & [0, 3] \\
			[0, 2] & \mapstodot & [1 , 3]
		\end{matrix} \right\} = \chi_{i_\Psi}^{BL}, \\
		\chi_{i_\Phi}^{BL} &= \left\{
		\begin{matrix}
			\langle 0, \cdot \rangle_{\Barc{\text{im}}} & & \langle 0, \cdot
			\rangle_{\Barc{V}} \\
			[0, 2] & \mapstodot & [0, 3] \\
			[0, 2] & \mapstodot & [0, 2]
		\end{matrix} \right\} = \chi_{i_\Psi}^{BL}, \\
		\chi^{BL}_\Phi &= \{([0,3], [0,3]), ([0,2], [1, 3])\}= \chi^{BL}_{\Psi},
	\end{align*}
	are therefore the same and do not respect the mapping of the morphism fully.
	
	This is not true for the matchings induced by the ladder decompositions of the interleavings. Notice that in the chosen barcode bases~$\Phi$ and~$\Psi$ are already in matching form, namely
	\begin{align*}
		\begin{aligned}
			\Phi = 
			\begin{bmatrix}
				1 & 0 \\
				0 & 1
			\end{bmatrix}
		\end{aligned}
		\qquad \text{ and } \qquad
		\begin{aligned}
			\Psi = 
			\begin{bmatrix}
				0 & 1 \\
				1 & 0
			\end{bmatrix}.
		\end{aligned}
	\end{align*}
	The matchings induced by their ladder decompositions are
	\begin{align*}
		\chi_\Phi &=  \{([0,3], [0,3]), ([0,2], [1, 3])\} \text{ and } \\
		\chi_{\Psi} &=  \{([0,3], [1,3]), ([0,2], [0, 3])\},
	\end{align*}
	which are clearly different. Despite a~$\delta$-interleaving being used in this example, the difference in the two definitions of induced matchings can be observed for a general morphism of persistence modules.
\end{example}

\subsection{Basis-Independent Partial Matchings}

As in this paper, Gonzalez Diaz and Soriano Trigueros in~\cite{basis_independent_matchings} adopt the view of morphisms as ladder persistence modules. They define a different notion of a partial matching, called \emph{basis-independent partial matching}, which is independent of the choice of order on the barcode (hence basis-independent).
\begin{definition}
	A \emph{basis-independent partial matching} between persistence modules~$V$ and~$W$, indexed over posets~$P_V$ and~$P_W$ respectively, is a function
	\begin{align*}
		\mathcal{M}_V^W\colon \overline{P_V} \times \overline{P_W} \to \Z_{\geq 0},
	\end{align*}
	where~$\overline{P_V}$ and~$\overline{P_W}$ are the sets of intervals in~$P_V$ and~$P_W$ respectively. Further, it must satisfy
	\begin{align*}
		&\sum_{1 \leq d \leq n} \sum_{1 \leq c \leq d} \mathcal{M}_V^W(a, b, c, d) \leq \mu^V([a, b]) \text{ and }\\
		&\sum_{1 \leq b \leq n} \sum_{1 \leq a \leq b} \mathcal{M}_V^W(a, b, c, d) \leq \mu^W([c, d]), 
	\end{align*}
	where~$\mu^V([a, b])$ is the multiplicity of bar~$[a, b]$ in~$\Barc{V}$ and~$\mu^W([c, d])$ is the multiplicity of bar~$[c, d]$ in~$\Barc{W}$.
\end{definition}
The ladder decomposition induced partial matchings of~\cite{jacquard2021space}, and therefore the ones we study in this paper, are examples of basis-independent partial matchings. To see this, define~$\mathcal{M}_V^W$ for a morphisms~$\Phi$ as
\begin{align*}
 	\mathcal{M}_V^W(a, b, c, d) = r_{[c, d]}^{[a, b]},
\end{align*}
where~$r_{[c, d]}^{[a, b]}$ is the multiplicity of~$R_{[c, d]}^{[a, b]}$ appearing in the ladder decomposition of~$\Phi$. Then~$\mathcal{M}_V^W(a, b, c, d)$ is the multiplicity of the pair~$([a, b], [c, d])$ in the ladder decomposition induced partial matching~$\chi_\Phi$. It is rather obvious that the sum~$\sum_{J} r_{[c, d]}^{J}$ is not bigger than the multiplicity of~$[c, d]$ in~$\Barc{W}$ and the sum~$\sum_{J} r_{J}^{[a, b]}$ is not bigger than the multiplicity of~$[a, b]$ in~$\Barc{V}$.

\subsection{\texorpdfstring{$q$}\ -Coarse Induced Partial Matchings}

Whenever the interleaving parameter~$\delta$ is too big to apply \cref{cor:cost_of_induced_matching}, we might still leverage the results of \cref{sec:quasi-barcode_form} to define partial matchings of potentially higher cost. The following result is obtained by combining \cref{prop:q-decompositions,cor:correspondence_of_matchings,cor:same_entries_qquasi,cor:matching_induced_by_decomp}.

\begin{corollary} \label{cor:q-induced_matching}
	Let~$(\Phi, \Psi)$ be a~$\delta$-interleaving pair between modules~$V$ and~$W$, and suppose there exists a parameter~$q$ such that
	\begin{align*}
		\delta < \half \min(\Xi(V_{\geq q}), \Xi(W_{\geq q})) - \qhalf.
	\end{align*}
	Then the~$(\delta + \qhalf)$-interleaving pair~$(\Phi', \Psi')$ it induces by \cref{prop:q-decompositions}~\eqref{interleaving1},~\eqref{interleaving2} or~\eqref{interleaving3} further induces a partial matching
	\begin{align*}
		&\chi_{\Phi'} \colon \Barc{V} \todot \Barc{W} \\
		&\chi_{\Phi'} = \{ ((J_1, J_2), r_{J_1}^{J_2}) \mid \mathbf{R}_{J_1 }^{J_2} \text{ appears in ladder decomposition of }\Phi' \}
	\end{align*}
	which leaves bars in~$\Barc{V_{<q}}$ and~$\Barc{W_{<q}}$ unmatched. It is of cost smaller or equal to~$\delta + \qhalf$. By \cref{cor:same_entries_qquasi} for any pair of bars~$J_V \in \Barc{V}$ and~$J_W \in \Barc{W}$ with~$|J_V| \geq 2\delta +q$ and~$|J_W| \geq 2\delta +q$ 
	\begin{align*}
		((J_V, J_W), \mu) \in \chi_{\Phi'} \iff ((J_W, J_V), \mu) \in \chi_{\Psi'},
	\end{align*}
	where~$\mu$ is the multiplicity of this pairing in both~$\chi_{\Phi'}$ and~$\chi_{\Psi'}$.
\end{corollary}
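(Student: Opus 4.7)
The strategy is to assemble the corollary directly from the four results cited: invoke \cref{prop:q-decompositions} to obtain the ladder decomposition of~$\Phi'$, then read off~$\chi_{\Phi'}$ via \cref{cor:matching_induced_by_decomp}. Since~$(\Phi', \Psi')$ forms a~$(\delta + \qhalf)$-interleaving pair by \cref{l:induced_d'_interleavings}, the properties established in \cref{cor:cost_of_induced_matching} and \cref{cor:same_entries_qquasi} transfer to~$\chi_{\Phi'}$ and~$\chi_{\Psi'}$, provided we simply replace~$\delta$ by~$\delta + \qhalf$ throughout. The hypothesis~$\delta < \half \min(\Xi(V_{\geq q}), \Xi(W_{\geq q})) - \qhalf$ is exactly what case~\eqref{interleaving3} of \cref{prop:q-decompositions} requires, and it also implies the hypotheses of cases~\eqref{interleaving1} and~\eqref{interleaving2}, so all three induced pairs are available.

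For the unmatching of short bars, I would argue case-by-case. In the cleanest case~\eqref{interleaving3} both source and target of~$\Phi'$ are the $q$-coarse parts, so~$\chi_{\Phi'}$ is natively a matching~$\Barc{V_\q} \todot \Barc{W_\q}$, which extends trivially to~$\Barc{V} \todot \Barc{W}$ by leaving every bar in~$\Barc{V_{<q}} \cup \Barc{W_{<q}}$ unmatched; cases~\eqref{interleaving1} and~\eqref{interleaving2} are handled analogously, with the projection (respectively inclusion) at the module level ensuring the corresponding side of the matching only contains long bars. For the cost bound, \cref{cor:cost_of_induced_matching} applied to the $(\delta + \qhalf)$-invertible morphism~$\Phi'$ guarantees that every matched pair contributes at most~$\delta + \qhalf$ to the bottleneck cost, and that every long bar left unmatched by the ladder decomposition contributes at most~$\delta + \qhalf$ as well. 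The unmatched short bars in~$\Barc{V_{<q}} \cup \Barc{W_{<q}}$ have length strictly less than~$q$, so each contributes strictly less than~$\qhalf \leq \delta + \qhalf$. Taking the maximum of these three contributions yields the claimed bound.

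The compatibility statement on long bars is then a direct restatement of \cref{cor:same_entries_qquasi} applied to~$(\Phi', \Psi')$, using the identity~$2(\delta + \qhalf) = 2\delta + q$ to match the length threshold in the hypothesis. The main, and essentially only, substantive obstacle is the book-keeping between the three cases of \cref{prop:q-decompositions}: when~$\Phi'$ is constructed via case~\eqref{interleaving1} while~$\Psi'$ comes from case~\eqref{interleaving2} (instead of both via case~\eqref{interleaving3}), one has to verify that the matched pairs produced on each side still correspond under the swap~$(J_V, J_W) \leftrightarrow (J_W, J_V)$. This is exactly the content of the paragraph between \cref{prop:q-decompositions} and \cref{cor:same_entries_qquasi}, which observes that the hypotheses of the three cases are coupled across the interleaving pair, so the required matching-up of cases is already built in.
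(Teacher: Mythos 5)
Your overall strategy—assemble the corollary from \cref{prop:q-decompositions}, \cref{cor:matching_induced_by_decomp}, \cref{cor:cost_of_induced_matching}, and \cref{cor:same_entries_qquasi}, with the substitution $\delta \mapsto \delta + \qhalf$—matches the paper's intent, and the treatment of case~\eqref{interleaving3} is correct: the ladder decomposition exists, short bars are trivially unmatched because they do not appear in $\Barc{V_\q}$ or $\Barc{W_\q}$, the unmatched short bars contribute cost $<\qhalf$, and \cref{cor:same_entries_qquasi} gives the compatibility for bars of length $\geq 2\delta+q$.

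However, there is a concrete error in the sentence claiming that the hypothesis $\delta < \half\min(\Xi(V_{\geq q}),\Xi(W_{\geq q})) - \qhalf$ ``also implies the hypotheses of cases~\eqref{interleaving1} and~\eqref{interleaving2}.'' The implication runs the other way. By \cref{l:q-coarse_properties}\eqref{l:prop1}, $\Xi(V_{\geq q})$ is a minimum over a \emph{subset} of the nested pairs entering $\Xi(V)$, so $\Xi(V_{\geq q}) \geq \Xi(V)$, and likewise for $W$. Hence $\min(\Xi(V),\Xi(W_{\geq q})) \leq \min(\Xi(V_{\geq q}),\Xi(W_{\geq q}))$, so case~\eqref{interleaving3}'s hypothesis is the \emph{weakest} of the three, not the strongest. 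The stated hypothesis therefore only guarantees that case~\eqref{interleaving3} applies; cases~\eqref{interleaving1} and~\eqref{interleaving2} need their own (stronger) hypotheses, which the corollary does not supply. This also bears on your ``analogous'' handling of cases~\eqref{interleaving1} and~\eqref{interleaving2}: there the domain (resp.\ codomain) of $\Phi'$ is the full module $V$ (resp.\ $W$), so bars of $\Barc{V_{<q}}$ (resp.\ $\Barc{W_{<q}}$) do appear in the row/column set of $M_{\Phi'}$ and are not excluded ``at the module level''; showing they remain unmatched would require an additional length argument, not just the projection. For the hypothesis as stated, the safe reading of the corollary is to work with case~\eqref{interleaving3} only, which is exactly where your argument is sound.
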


	\def\UrlBreaks{\do\/\do-}
	\bibliographystyle{acm}
	\bibliography{bibliography}
\end{document}